\documentclass{article}
\usepackage{amsthm,amsmath,amssymb,amsfonts,graphicx,enumitem}
\oddsidemargin    =0cm \evensidemargin   =0cm \textwidth =16cm
\textheight       =21.5cm \headheight       =0cm

  \newtheorem{thm}{Theorem}[section]
  \newtheorem{lem}[thm]{Lemma}
  \newtheorem{prop}[thm]{Proposition}
  \newtheorem{cor}[thm]{Corollary}
  \theoremstyle{definition}
  \newtheorem{defn}[thm]{Definition}
  \newtheorem{exm}[thm]{Example}
  \newtheorem{rmk}[thm]{Remark}
  

 \newcommand\ra{\rightarrow}

 \newcommand\mI{\mathcal{I}}

 \newcommand\s{\subseteq}

 \newcommand\Hom{\mathrm{Hom}}

 \newcommand\e{\textsf{e}}
  \newcommand\lam{\lambda}

 \numberwithin{equation}{section}
\def\iff{if and only if }
\def\im{ \mathrm{Im} }

\def\iff{if and only if }
\begin{document}
\title{{\bf Morphisms on $EMV$-algebras and Their Applications} }

\author{ Anatolij Dvure\v{c}enskij$^{^{1,2}}$, Omid Zahiri$^{^{3}}$ \\ 
{\small\em $^1$Mathematical Institute,  Slovak Academy of Sciences, \v Stef\'anikova 49, SK-814 73 Bratislava, Slovakia} \\
{\small\em $^2$Depart. Algebra  Geom.,  Palack\'{y} Univer., 17. listopadu 12, CZ-771 46 Olomouc, Czech Republic} \\
{\small\em  $^{3}$University of Applied Science and Technology, Tehran, Iran}\\
{\small\tt  dvurecen@mat.savba.sk\quad   zahiri@protonmail.com} }
\date{}
\maketitle
\begin{abstract}
For a new class of algebras, called $EMV$-algebras, every idempotent element $a$ determines an $MV$-algebra which is important for the structure of the $EMV$-algebra. Therefore, instead of standard homomorphisms of $EMV$-algebras, we introduce $EMV$-morphisms as a family of $MV$-homomorphisms from $MV$-algebras $[0,a]$ into other ones. $EMV$-morphisms enable us to study categories of $EMV$-algebras where objects are $EMV$-algebras and morphisms are special classes of $EMV$-morphisms. The category is closed under product. In addition, we define free $EMV$-algebras on a set $X$ with respect to $EMV$-morphisms. If $X$ is finite, then the free $MV$-algebra on $X$ is a free $EMV$-algebras. For an infinite set $X$, the same is true introducing a so-called weakly free $EMV$-algebra.
\end{abstract}

{\small {\it AMS Mathematics Subject Classification (2010)}:  06C15, 06D35 }

{\small {\it Keywords:} $MV$-algebra, $EMV$-algebra, $EMV$-homomorphism, $EMV$-morphism, standard $EMV$-morphism, free $EMV$-algebra, weakly free $EMV$-algebra, categories of $EMV$-algebras }

{\small {\it Acknowledgement:} AD is thankful for the support by the Slovak Research and Development Agency under the contract No. APVV-16-0073
and by grants VEGA No. 2/0069/16 SAV and GA\v{C}R 15-15286S  }

\section{ Introduction }
Boolean algebras are well-known structures studied over many decades. They describe an algebraic semantics for two-valued logic. In Thirties, there appeared Boolean rings, or equivalently, generalized Boolean
algebras, which have almost Boolean features, but a top element is not
assumed. For such structures, Stone, see e.g. \cite[Thm 6.6]{LuZa}, developed a representation of Boolean rings by rings of subsets, and also some logical models with such an incomplete information were established, see \cite{Sto1,Sto2}.

Recently in \cite{Dvz}, a \L ukasiewicz type algebraic structure with incomplete total information was developed, i.e. we found an algebraic semantics very similar to $MV$-algebras with incomplete information, which however in a local sense is complete: Conjunctions and disjunctions exist, negation only locally, i.e. negation of $a$ in $b$ exists only if $a\le b$ but total negation of the event $a$ is not assumed.
For such ideas we have introduced in \cite{Dvz} $EMV$-algebras which are locally close to $MV$-algebras, however, the top element is not assumed. The basic representation theorem says, \cite[Thm 5.21]{Dvz}, that even in such a case, we can find an $MV$-algebra where the original algebra can be embedded as its maximal ideal, i.e. an incomplete information hidden in an $EMV$-algebra is sufficient to find a \L ukasiewicz logical system where a top element exists and where all original statements are valid.

Of course, every $MV$-algebra is an $EMV$-algebra ($EMV$-algebras stand for extended $MV$-algebras), and $EMV$-algebras generalize Chang's $MV$-algebras, \cite{Cha}. Nowadays $MV$-algebras have many important applications in different areas of mathematics and logic. Therefore, $MV$-algebras have many different generalizations, like $BL$-algebras, pseudo $MV$-algebras, \cite{georgescu, Dvu2}, $GMV$-algebras in the realm of residuated lattices, \cite{Tsinakis}, etc. In the last period $MV$-algebras are studied also in frames of involutive semirings, see \cite{DiRu}. The presented $EMV$-algebras are another kind of generalizations of $MV$-algebras inspired by Boolean rings.

In \cite{Dvz1} we have formulated and proved a Loomis--Sikorski-type theorem for $\sigma$-complete $EMV$-algebras showing that every $\sigma$-complete $EMV$-algebra is a $\sigma$-homomorphic image of an $EMV$-tribe of fuzzy sets, where all $EMV$-operations are defined by points. To show this, we have introduced the hull-kernel topology of the set of maximal ideals of an $EMV$-algebra and the weak topology of state-morphisms which are $EMV$-homomorphisms from the $EMV$-algebra into the $MV$-algebra of the real interval $[0,1]$, or equivalently, a variant of extremal probability measures.

In this paper we will propose a new definition for morphisms between $EMV$-algebras, called $EMV$-morphisms, which is more relevant to this structure. An $EMV$-morphism from $M_1$ to $M_2$ is a family of $MV$-homomorphisms  with special properties, Section 3.
We define similarity, ``$\approx$'', and composition of morphisms, ``$\circ$'', of two $EMV$-morphisms
which are presented in Definition \ref{hhhh} and Proposition \ref{9.2}.  We prove that ``$\approx$'' and ``$\circ$'' are compatible.
In Section 4, we introduce three categories of $EMV$-algebras.
We find some properties of $EMV$-morphisms and show that the category of $EMV$-algebras with this new morphisms is closed under products, Section 5.
Finally in Section 6, we study free $EMV$-algebras on a set $X$ with respect to our $EMV$-morphisms. We show that the free $MV$-algebra on a finite set $X$ is also free in this class. For an infinite set $X$, we
introduce a new object, called a weakly free $EMV$-algebra, which is very similar to the free object, and we show that the free $MV$-algebra on any set $X$ is the weakly free $EMV$-algebra on $X$.

\section{ Preliminaries}

We remind that an $MV$-{\it algebra} is an algebra $(M;\oplus,',0,1)$ (henceforth write simply $M=(M;\oplus,',0,1)$) of type $(2,1,0,0)$, where $(M;\oplus,0)$ is a
commutative monoid with the neutral element $0$ and for all $x,y\in M$, we have:
\vspace{1mm}
\begin{enumerate}[nolistsep]
	\item[(i)] $x''=x$;
	\item[(ii)] $x\oplus 1=1$;
	\item[(iii)] $x\oplus (x\oplus y')'=y\oplus (y\oplus x')'$.
\end{enumerate}
\vspace{2mm}
It is well-known that every $MV$-algebra is a distributive lattice. For more info about $MV$-algebras, see \cite{mundici 1}.

Let $(M;\oplus,0)$ be a commutative monoid. An element $a \in M$ is said to be {\it idempotent} if $a\oplus a = a$. We denote by $\mathcal I(M)$ the set of idempotents of $M$. Then (i) $0 \in \mathcal I(M)$, and $a,b \in \mathcal I(M)$ entail $a\oplus b \in \mathcal I(M)$. A non-empty subset $S$ of idempotents of $M$ is {\it full} if, given $b \in \mathcal I(M)$, there is $a \in S$ such that $b\le a$. A commutative monoid $(M;\oplus,0)$ endowed with a partial order $\le$ is {\it ordered} if $x\le y$ implies $x\odot z\le y\odot z$.

\begin{defn}\label{de:GMV}\cite{Dvz}
	An algebra $(M;\vee,\wedge,\oplus,0)$ of type $(2,2,2,0)$ is called an
	{\it extended $MV$-algebra}, an {\it $EMV$-algebra} in short,  if it satisfies the following conditions:
	\vspace{1mm}
	\begin{itemize}[nolistsep]
		\item[]{\rm ($EMV1$)}\ $(M;\vee,\wedge,0)$ is a distributive lattice with the least element $0$;
		\item[]{\rm ($EMV2$)}\  $(M;\oplus,0)$ is a commutative ordered monoid with neutral element $0$;
		\item[]{\rm ($EMV3$)}\ for each $b\in \mI(M)$, the element
		$$
		\lambda_{b}(x)=\min\{z\in[0,b]\mid x\oplus z=b\}
		$$
		exists in $M$ for all $x\in [0,b]$, and the algebra $([0,b];\oplus,\lambda_{b},0,b)$ is an $MV$-algebra;
		\item[]{\rm ($EMV4$)}\ for each $x\in M$, there is $a\in \mI(M)$ such that
		$x\leq a$.
	\end{itemize}
\end{defn}
An $EMV$-algebra $M$ is called {\em proper} if $M$ is not an $MV$-algebra or, equivalently, $M$ has not a top element.

We note that every $MV$-algebra can be viewed as an $EMV$-algebra, every Boolean ring (= generalized Boolean algebra) is also an example of $EMV$-algebras. The basic properties of $EMV$-algebras were presented in \cite{Dvz}, where there was proved, in particular, that the class of $EMV$-algebras forms a variety.

Let $(M;\vee,\wedge,\oplus,0)$ be an $EMV$-algebra. Then for all $a\in\mI(M)$, we have a well-known binary operation $\odot_a$ on the $MV$-algebra $([0,a];\oplus,\lam_a,0,a)$ given by
$x\odot_{_a} y=\lam_a(\lam_a(x)\oplus \lam_a(y))$ for all $x,y \in [0,a]$. It can be extended to $\odot$ defined on the whole $M$, see Lemma \ref{le:x<y} below.

Let $(M;\vee,\wedge,\oplus,0)$ be an $EMV$-algebra. Its reduct
$(M;\vee,\wedge,0)$ is a distributive lattice with a bottom element $0$. The lattice structure of $M$  yields a partial order relation on $M$, denoted by $\leq$, that is $x\leq y$ iff $x\vee y=y$ iff $x\wedge y=x$. If $a$ is a fixed idempotent element of $M$,
$([0,a];\oplus,\lambda_a,0,a)$ is an $MV$-algebra. Recall that, in each $MV$-algebra $(A;\oplus,',0,1)$
there is a partial order relation $\preccurlyeq$ (induced by $\oplus,',1$) defined by $x\preccurlyeq  y$ iff $x'\oplus y=1$. So, the partial order on
the $MV$-algebra $([0,a];\oplus,\lambda_a,0,a)$ is defined by $x\preccurlyeq y$ iff $\lambda_a(x)\oplus y=a$. In the sequel, we
show that, for each $x,y\in [0,a]$, we have
\[x\leq y\Leftrightarrow x\preccurlyeq y.\]
First, we assume that $x\preccurlyeq y$. Then $\lambda_a(x)\oplus y=a$. Set $z:=\lambda_a(x)$. Then
$z\oplus y=a$ entails $\lambda_a(z)\leq y$ (by definition) and so $\lambda_a(\lambda_a(x))\leq y$.  Thus $x\leq y$.
Note that $\lambda_a(x)=\min\{u\in [0,a]\mid x\oplus u=a\}$ where $\min$ is related to $\leq$.

Conversely, let $x\leq y$. We show that $x\preccurlyeq y$ or equivalently, $\lambda_a(x)\oplus y=a$.
By definition of $\lambda_a(x)$, we have $x\oplus \lambda_a(x)=a$. Since $x\leq y$ and $([0,a];\oplus ,0)$ is a partially ordered monoid
(see $EMV2$), then $a=x\oplus\lambda_a(x)\leq y\oplus\lambda_a(x)\leq a$ and so $y\oplus \lambda_a(x)=a$, which
implies that $x\preccurlyeq y$.

Finally, we can easily prove that, for each $z,w\in [0,a]$,
the supremum and infimum of the set $\{z,w\}$ in $[0,a]$ coincide with ones of $\{w,z\}$ taken in $M$.

Moreover, it is possible to show that given $x,y \in M$, $x\le y$ iff there is an element $z\in M$ such that $x\oplus z = y$. Indeed, there is an idempotent $a\in M$ such that $x,y \le a$. Then in the $MV$-algebra $[0,a]$, we have $x\vee y= x\oplus (y\odot_a \lambda_a(x))$, that is, $M$ is a naturally ordered monoid.

\begin{defn}\label{3.4}\cite{Dvz}
	(i)  Let $(M;\vee,\wedge,\oplus,0)$ be an $EMV$-algebra. A subset $A\s M$ is called an $EMV$-{\it subalgebra} of $M$ if
\begin{itemize}[nolistsep]
\item[{\rm (1)}]  $A$ is closed
	under $\vee$, $\wedge$, $\oplus$ and $0$;
\item[{\rm (2)}] 	for all $x\in A$,  there is $b\in A\cap \mI(M)$ such that $x\leq b$;

 \item[{\rm (3)}] for each $b\in \mI(M)\cap A$ the set $[0,b]_A:=[0,b]\cap A$ is a subalgebra
	of the $MV$-algebra $([0,b];\oplus,\lam_b,0,b)$.
\end{itemize}
Clearly, the last condition is equivalent to the following condition:
	$$\forall\, b\in A\cap \mI(M),\quad \forall\, x\in [0,b]_A,\ \ \min\{z\in [0,b]_A\mid x\oplus z=b\}=\min\{z\in [0,b]\mid x\oplus z=b\},
	$$
	and this also means $\lambda_{b}(x)$ is defined in $[0,b]_A$ for each $b\in \mathcal I (M)\cap A$ and for each $x\in [a,b]_A$.
	
	(ii) Let $(M_1;\vee,\wedge,\oplus,0)$ and $(M_2;\vee,\wedge,\oplus,0)$ be $EMV$-algebras. A map $f:M_1\ra M_2$ is called an $EMV$-{\it homomorphism}
	if $f$ preserves the operations $\vee$, $\wedge$, $\oplus$ and $0$, and for each $b\in\mI(M_1)$ and for each $x\in [0,b]$, $f(\lam_b(x))= \lam_{f(b)}(f(x))$.
	An $EMV$-homomorphism $f:M_1\ra M_2$ is called {\em strong} if $\{f(a)\mid a\in\mI(M_1)\}$ is a full subset of $M_2$ (that is, for each $b\in \mI(M_2)$, there exists $a\in \mathcal I(M_1)$ such that $b\leq f(a)$). Clearly, every identity $Id_M$ on an $EMV$-algebra is a strong $EMV$-homomorphism.

\end{defn}

\begin{prop}\label{3.5}{\rm \cite[Prop 3.9]{Dvz}}
	Let $(M;\vee,\wedge,\oplus,0)$ be an $EMV$-algebra, $a,b\in \mI(M)$ such that $a\leq b$.
	Then  for each $x\in [0,a]$, we have
	\vspace{1mm}
	\begin{itemize}[nolistsep]
		\item[{\rm (i)}]  $\lam_a(x)=\lam_b(x)\wedge a$;
		\item[{\rm (ii)}]   $\lam_b(x)=\lam_a(x)\oplus \lam_b(a)$;
		\item[{\rm (iii)}] $\lam_b(a)$ is an idempotent, and $\lam_a(a)=0$.
	\end{itemize}
\end{prop}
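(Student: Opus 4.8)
The plan is to work entirely inside the $MV$-algebra $([0,b];\oplus,\lambda_b,0,b)$, using that $[0,a]\s[0,b]$ (because $a\le b$) and that $a$, being $\oplus$-idempotent, is a Boolean element of $[0,b]$. Recall from the preliminaries that on each interval the $MV$-order $\preccurlyeq$ coincides with $\le$, that $\lambda_b$ is order-reversing, and that in any $MV$-algebra $\oplus$ distributes over $\wedge$ (see \cite{mundici 1}). I will also use two standard facts about a Boolean element $a$ of an $MV$-algebra: its negation $\lambda_b(a)$ is again Boolean, and meeting with $a$ is the projection onto the direct factor $[0,a]$, so that $(u\oplus v)\wedge a=(u\wedge a)\oplus(v\wedge a)$ and $\lambda_b(a)\wedge a=0$. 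I would prove (iii) first, then (i), and finally deduce (ii).

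For (iii): since $a\oplus 0=a$ and $0$ is the least element of $[0,a]$, the set $\{z\in[0,a]\mid a\oplus z=a\}$ has least element $0$, hence $\lambda_a(a)=0$. As $a$ is $\oplus$-idempotent it is Boolean in $[0,b]$, and the negation of a Boolean element is Boolean; thus $\lambda_b(a)$ is $\oplus$-idempotent, i.e.\ an idempotent of $M$.

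For (i): put $z_0:=\lambda_b(x)\wedge a\in[0,a]$. Using distributivity of $\oplus$ over $\wedge$ together with $x\oplus\lambda_b(x)=b$ and $x\oplus a=a$ (the latter because $x\le a$ gives $a\le x\oplus a\le a\oplus a=a$), I compute $x\oplus z_0=(x\oplus\lambda_b(x))\wedge(x\oplus a)=b\wedge a=a$. Hence $z_0$ lies in $\{z\in[0,a]\mid x\oplus z=a\}$, so $\lambda_a(x)\le z_0$. For the reverse inequality I show $z_0$ is a lower bound of this set: if $z\in[0,a]$ and $x\oplus z=a$, then adding $\lambda_b(a)$ gives $x\oplus(z\oplus\lambda_b(a))=a\oplus\lambda_b(a)=b$, so minimality of $\lambda_b(x)$ yields $\lambda_b(x)\le z\oplus\lambda_b(a)$; meeting with $a$ and using that $\wedge a$ is an $\oplus$-retraction together with $\lambda_b(a)\wedge a=0$ gives $z_0=\lambda_b(x)\wedge a\le(z\oplus\lambda_b(a))\wedge a=z$. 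Taking $z=\lambda_a(x)$ gives $z_0\le\lambda_a(x)$, so $\lambda_a(x)=\lambda_b(x)\wedge a$.

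Finally (ii) follows from the two computations already in hand. The identity $x\oplus(\lambda_a(x)\oplus\lambda_b(a))=b$ (obtained as above with $z=\lambda_a(x)$) shows $\lambda_b(x)\le\lambda_a(x)\oplus\lambda_b(a)$, while reading $\lambda_b(x)$ in the decomposition $[0,b]\cong[0,a]\times[0,\lambda_b(a)]$ — equivalently, applying the retraction $\wedge a$ and its complementary retraction to the identity of (i) — gives the reverse inequality, whence $\lambda_b(x)=\lambda_a(x)\oplus\lambda_b(a)$. The main obstacle is purely the two Boolean-element facts invoked above, namely that $\lambda_b(a)$ is idempotent and that $\wedge a$ behaves as a projection onto $[0,a]$; once these are granted, both identities reduce to monotonicity and the distributive law. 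These facts are exactly the statement that, for Boolean $a$, the $MV$-algebra $[0,b]$ splits as $[0,a]\times[0,\lambda_b(a)]$, which I would cite from the $MV$-algebra literature \cite{mundici 1}.
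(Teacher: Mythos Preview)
The paper does not contain a proof of this proposition: it is merely quoted from \cite[Prop~3.9]{Dvz}, so there is no argument here to compare against. Your proposal is correct and is exactly the natural approach---work inside the $MV$-algebra $[0,b]$ and exploit that the idempotent $a$ is a Boolean element there, so that $[0,b]\cong[0,a]\times[0,\lambda_b(a)]$ and $u\mapsto u\wedge a$ is an $MV$-retraction. One small remark: for (ii) you do not need two inequalities once the decomposition is in hand. Since $x\le a$ implies $\lambda_b(a)\le\lambda_b(x)$, the Boolean splitting gives directly
\[
\lambda_b(x)=\bigl(\lambda_b(x)\wedge a\bigr)\oplus\bigl(\lambda_b(x)\wedge\lambda_b(a)\bigr)=\lambda_a(x)\oplus\lambda_b(a),
\]
using (i) for the first summand and $\lambda_b(x)\ge\lambda_b(a)$ for the second.
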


\begin{defn}\label{3.8}\cite{Dvz}
	Let $(M;\vee,\wedge,\oplus,0)$ be an $EMV$-algebra. An equivalence relation $\theta$ on $M$ is called a {\it congruence relation} or simply a \emph{congruence} if
	it satisfies the following conditions:
	\vspace{1mm}
	\begin{itemize}[nolistsep]
		\item[(i)]  $\theta$ is compatible with $\vee$, $\wedge$ and $\oplus$;
		
		\item[(ii)] for all $b\in \mI(M)$, $\theta\cap ([0,b]\times [0,b])$ is a congruence relation on the $MV$-algebra $([0,b];\oplus,\lam_b,0,b)$.
	\end{itemize}
	We denote by $\mathrm{Con}(M)$ the set of all congruences on $M$.
\end{defn}

\begin{prop}\label{3.9} {\rm \cite[Prop 3.13]{Dvz}}
	An equivalence relation $\theta$ on an $EMV$-algebra $(M;\vee,\wedge,\oplus,0)$ is a congruence if it is compatible with $\vee$, $\wedge$ and $\oplus$, and
	for all $(x,y)\in\theta$, there exists $b\in\mI(M)$ such that $x,y\leq b$ and $(\lam_b(x),\lam_b(y))\in\theta$.
\end{prop}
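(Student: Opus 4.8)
The plan is to verify directly the two defining clauses of Definition \ref{3.8}. Clause (i), compatibility of $\theta$ with $\vee$, $\wedge$, $\oplus$, is already part of our hypotheses, so all the work lies in clause (ii): for every $b\in\mI(M)$, the restriction $\theta_b:=\theta\cap([0,b]\times[0,b])$ is a congruence of the $MV$-algebra $([0,b];\oplus,\lam_b,0,b)$. Now $\theta_b$ is plainly an equivalence relation on $[0,b]$, and since $b$ is idempotent, monotonicity of $\oplus$ gives $x\oplus y\le b\oplus b=b$ whenever $x,y\le b$; hence the global $\oplus$ is internal to $[0,b]$ and coincides there with the $MV$-sum, so compatibility of $\theta$ with $\oplus$ immediately yields compatibility of $\theta_b$ with it. Thus the only genuine content is to show that $\theta_b$ is compatible with the negation $\lam_b$, i.e. that $(x,y)\in\theta$ with $x,y\le b$ forces $(\lam_b(x),\lam_b(y))\in\theta$.

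The reason this is not a triviality is that our hypothesis only produces \emph{some} idempotent witnessing the $\lambda$-compatibility of the pair $(x,y)$, whereas clause (ii) demands it for the \emph{prescribed} $b$. To bridge this gap, fix $(x,y)\in\theta$ with $x,y\le b$ and let $c\in\mI(M)$ be the idempotent supplied by the hypothesis, so that $x,y\le c$ and $(\lam_c(x),\lam_c(y))\in\theta$. The key idea is to pass to the common upper bound $d:=b\oplus c$, which is again idempotent and satisfies $b\le d$, $c\le d$ and $x,y\le d$, and then to transport the relation first upward to the level of $d$ and afterwards downward to the level of $b$, using Proposition \ref{3.5} in each direction.

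For the upward step I would invoke Proposition \ref{3.5}(ii) with the pair $c\le d$ to write $\lam_d(x)=\lam_c(x)\oplus\lam_d(c)$ and $\lam_d(y)=\lam_c(y)\oplus\lam_d(c)$. Since $(\lam_c(x),\lam_c(y))\in\theta$ and $\theta$ is compatible with $\oplus$, adding the fixed element $\lam_d(c)$ to both sides gives $(\lam_d(x),\lam_d(y))\in\theta$. For the downward step I would invoke Proposition \ref{3.5}(i) with the pair $b\le d$ to write $\lam_b(x)=\lam_d(x)\wedge b$ and $\lam_b(y)=\lam_d(y)\wedge b$; compatibility of $\theta$ with $\wedge$, meeting both sides with the fixed element $b$, then yields $(\lam_b(x),\lam_b(y))\in\theta$. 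This is precisely the required $\lam_b$-compatibility of $\theta_b$, completing the verification of clause (ii) and hence the proof.

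The main obstacle is conceptual rather than computational: one cannot reason inside $[0,b]$ alone, but must enlarge the ambient interval to $d=b\oplus c$ so that the comparison formulas of Proposition \ref{3.5} apply both ways. Once the detour through $d$ is arranged, the two transfers are mechanical consequences of the additive identity \ref{3.5}(ii) and the meet identity \ref{3.5}(i), combined with the compatibility of $\theta$ with $\oplus$ and with $\wedge$, respectively.
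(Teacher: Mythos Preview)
Your proof is correct. The paper itself does not supply a proof of this proposition; it merely cites \cite[Prop 3.13]{Dvz}, so there is no argument in the present paper to compare against. Your verification of clause~(ii) of Definition~\ref{3.8} is exactly the right idea: the hypothesis only hands you \emph{some} idempotent $c$ witnessing $\lambda$-compatibility, and you must transfer this to the prescribed $b$. Passing to the common idempotent upper bound $d=b\oplus c$, lifting via Proposition~\ref{3.5}(ii) (using $\oplus$-compatibility) and then descending via Proposition~\ref{3.5}(i) (using $\wedge$-compatibility) is precisely the natural route, and each step is justified. One minor remark: you might also note explicitly that $\lam_b(x),\lam_b(y)\le b$, so the pair lands back in $\theta_b$ and not merely in $\theta$; this is immediate but worth a word for completeness.
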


Let $\theta$ be a congruence relation on an $EMV$-algebra $(M;\vee, \wedge,\oplus,0)$ and $M/\theta=\{[x]\mid x\in M\}$ (we usually use $x/\theta$ instead of $[x]$). Consider the induced operations $\vee$, $\wedge$ and $\oplus$ on $M/\theta$ defined by
$$
[x]\vee[y]=[x\vee y],\  \  [x]\wedge [y]=[x\wedge y],\   \  [x]\oplus [y]=[x\oplus y],\quad \forall\, x,y\in M.
$$
In addition, $[\lambda_b(x)]=\lambda_{[b]}([x])$ for each $b \in \mathcal I(M)$.
Then $(M/\theta;\vee,\wedge,\oplus,0/\theta)$ is an $EMV$-algebra, and the mapping $x\mapsto x/\theta$ is an $EMV$-homomorphism from $M$ onto $M/\theta$.

\begin{lem}\label{le:x<y}{\rm \cite[Lem 5.1]{Dvz}}
Let $(M;\vee, \wedge,\oplus,0)$ be an $EMV$-algebra. For all $x,y\in M$, we define
$$
x\odot y=\lam_a(\lam_a(x)\oplus \lam_a(y)),
$$
where $a\in\mI(M)$ and $x,y\leq a$. Then $\odot:M\times M\ra M$ is an order preserving, associative well-defined binary operation on $M$ which does not depend on $a\in \mI(M)$.
In addition, if $x,y \in M$, $x\le y$, then $y \odot \lambda_a(x)=y\odot \lambda_b(x)$
for all idempotents $a,b$ of $M$ with $x,y\le a,b$.
\end{lem}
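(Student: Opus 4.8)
The plan is to isolate one auxiliary identity relating $\lam_a$ and $\lam_c$ for nested idempotents $a\le c$, and to derive every assertion of the lemma from it together with standard $MV$-manipulations performed inside the fixed $MV$-algebra $([0,c];\oplus,\lam_c,0,c)$. The identity I would aim for is: for idempotents $a\le c$ and $w\in[0,a]$, writing $e:=\lam_c(a)$, one has $\lam_c(w\oplus e)=\lam_a(w)$. To prove it I would work in $[0,c]$. By Proposition \ref{3.5}(iii), $e$ is idempotent, and being $\lam_c(a)$ it is the Boolean complement of $a$ in $[0,c]$; in particular $\lam_c(e)=a$. Using the $MV$-De Morgan law $\lam_c(u\oplus v)=\lam_c(u)\odot_c\lam_c(v)$, then Proposition \ref{3.5}(ii) in the form $\lam_c(w)=\lam_a(w)\oplus e$, then the standard fact that a Boolean element $a$ satisfies $z\odot_c a=z\wedge a$, and finally Proposition \ref{3.5}(i) in the form $\lam_a(w)=\lam_c(w)\wedge a$, I would compute $\lam_c(w\oplus e)=\lam_c(w)\odot_c a=(\lam_a(w)\oplus e)\wedge a=\lam_c(w)\wedge a=\lam_a(w)$.

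Before anything else I would record the absorption property that each idempotent $a$ satisfies $w\oplus a=a$ for $w\le a$ (it is the top of the $MV$-algebra $[0,a]$), so that $[0,a]$ is closed under $\oplus$ and all the sums below stay in range. For independence of $a$, given idempotents $a,b$ with $x,y\le a$ and $x,y\le b$, the join $c:=a\vee b$ is again idempotent, since inside $[0,a\oplus b]$ the elements $a,b$ are Boolean and hence $a\vee b=a\oplus b\in\mI(M)$, and $c$ dominates both. It therefore suffices to show the value is unchanged when passing from $a$ to $c$ (then apply the same to $b$). Setting $u=\lam_a(x)$, $v=\lam_a(y)$ and $e=\lam_c(a)$, Proposition \ref{3.5}(ii) gives $\lam_c(x)=u\oplus e$ and $\lam_c(y)=v\oplus e$, whence $\lam_c(x)\oplus\lam_c(y)=(u\oplus v)\oplus e$ because $e$ is idempotent; the auxiliary identity with $w=u\oplus v\le a$ then yields $\lam_c(\lam_c(x)\oplus\lam_c(y))=\lam_a(u\oplus v)=\lam_a(\lam_a(x)\oplus\lam_a(y))$. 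This is exactly independence of $a$, so $\odot$ is well defined.

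For order-preservation and associativity I would simply localise. Given the finitely many elements involved, $(EMV4)$ provides an idempotent $a$ above all of them, and by well-definedness $\odot$ restricted to $[0,a]$ coincides with the $MV$-product $\odot_a$. Monotonicity of $\odot_a$ and the inequality $x\odot_a y\le x\wedge y$ (so that iterated products never leave $[0,a]$) are standard $MV$-facts, and associativity of $\odot_a$ holds because $[0,a]$ is an $MV$-algebra; these transfer verbatim to $\odot$ on $M$.

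For the final claim, with $x\le y$ and idempotents $a,b$ above $x,y$, I would again pass to $c=a\vee b$ and prove $y\odot\lam_a(x)=y\odot\lam_c(x)$ (and symmetrically for $b$). Rewriting via the $MV$-involution, $y\odot\lam_a(x)=\lam_a(\lam_a(y)\oplus x)$ and $y\odot\lam_c(x)=\lam_c(\lam_c(y)\oplus x)$. Using $\lam_c(y)=\lam_a(y)\oplus e$ gives $\lam_c(y)\oplus x=(\lam_a(y)\oplus x)\oplus e$ with $\lam_a(y)\oplus x\le a$, so the auxiliary identity with $w=\lam_a(y)\oplus x$ forces $\lam_c(\lam_c(y)\oplus x)=\lam_a(\lam_a(y)\oplus x)$, as required. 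The crux throughout, and the step I expect to be the only real obstacle, is the auxiliary identity $\lam_c(w\oplus\lam_c(a))=\lam_a(w)$: it is where the two levels $[0,a]$ and $[0,c]$ must be reconciled, and it is established by carefully combining parts (i), (ii) and (iii) of Proposition \ref{3.5} with the De Morgan law and the Boolean-meet identity in $[0,c]$. Everything else is bookkeeping and the reduction to a common idempotent upper bound.
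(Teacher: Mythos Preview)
Your argument is correct. Note, however, that the present paper does not supply its own proof of this lemma: it is quoted from \cite{Dvz} and stated without proof here, so there is nothing in this paper to compare against directly.

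On its own merits your proof is clean and complete. The key device---the auxiliary identity $\lam_c(w\oplus\lam_c(a))=\lam_a(w)$ for idempotents $a\le c$ and $w\in[0,a]$---is correctly derived from Proposition~\ref{3.5} together with the De~Morgan law and the fact that a Boolean element $a$ of an $MV$-algebra satisfies $z\odot_c a=z\wedge a$. Your reduction to a common idempotent upper bound via $c=a\vee b=a\oplus b\in\mI(M)$ is the natural move, and the verification that $[0,a]$ is $\oplus$-closed (so that $u\oplus v\le a$ and $\lam_a(y)\oplus x\le a$) is exactly what is needed to stay in range when applying the auxiliary identity. The localisation argument for monotonicity and associativity is also the right one: once well-definedness is in hand, these properties are inherited from the $MV$-algebra $[0,a]$.

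One minor observation: your proof of the final assertion never invokes the hypothesis $x\le y$, so you have in fact established the slightly stronger statement that $y\odot\lam_a(x)$ is independent of the idempotent $a\ge x,y$ without any order constraint between $x$ and $y$. This is not a defect; the hypothesis in the original statement is presumably there only because the intended application (a well-defined difference $y\ominus x$) concerns that case.
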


For any integer $n\ge 1$ and any $x$ of an $EMV$-algebra $M$, we can define $ x^1 =x$, $x^n=x^{n-1}\odot x$, $n\ge 2$,
and if $M$ has a top element $1$, we define also $x^0=1$.

We note that a non-void subset $I$ of an $EMV$-algebra $M$ is an {\it ideal} if (i) if $a\le b \in I$, then $a \in I$, and (ii) $a,b \in I$ gives $a\oplus b \in I$.

We have already said that not every $EMV$-algebra $M$ possesses a top element. Anyway, in such a case,  it can be embedded into an $MV$-algebra as its maximal ideal as the following representation theorem says:

\begin{thm}\label{th:embed}{\rm \cite[Thm 5.21]{Dvz}} {\rm [Basic Representation Theorem]}
Every $EMV$-algebra $M$ is either an $MV$-algebra or $M$ can be embedded into an $MV$-algebra $N$ as a maximal ideal of $N$.
\end{thm}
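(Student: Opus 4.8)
The plan is to settle the stated dichotomy at once and then, for a proper $M$, to manufacture the host $MV$-algebra $N$ from a single abelian $\ell$-group assembled from the local groups of the blocks $[0,a]$. First the easy alternative: if $M$ has a greatest element $1$, then by $(EMV4)$ there is an idempotent $a$ with $1\le a$, whence $a=1$; so $1\in\mathcal I(M)$ and $M=[0,1]$ is already an $MV$-algebra by $(EMV3)$. Thus I may assume $M$ is proper, i.e. has no top, and must construct $N$.

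Next I would present $M$ as a directed union of $MV$-algebras and transport it to the $\ell$-group side. Since $\mathcal I(M)$ is closed under $\oplus$ (hence upward directed) and each $x$ lies below some idempotent by $(EMV4)$, we have $M=\bigcup_{a\in\mathcal I(M)}[0,a]$. For $a\le b$ in $\mathcal I(M)$, Proposition \ref{3.5} shows $a$ is a Boolean (characteristic) element of the $MV$-algebra $[0,b]$ with complement $\lambda_b(a)$, so $[0,b]\cong[0,a]\times[0,\lambda_b(a)]$. Writing $[0,a]=\Gamma(G_a,a)$ via Mundici's equivalence between $MV$-algebras and unital abelian $\ell$-groups, this product decomposition turns the inclusions $[0,a]\hookrightarrow[0,b]$ into direct-summand embeddings $G_a\hookrightarrow G_b$, $g\mapsto(g,0)$. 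These form a directed system of $\ell$-groups, and I set $G:=\varinjlim_a G_a$. Then $G$ is an abelian $\ell$-group, each $\hat a$ is a strong unit of the summand $G_a$, the idempotents are cofinal (every $g\in G$ satisfies $|g|\le m\hat a$ for suitable $m,a$), and $M$ is recovered inside $G^+$ as the set of positive elements bounded by some idempotent.

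Then I adjoin a strong unit lying above all idempotents. Put $H:=G\oplus\mathbb{Z}u$ as abelian groups, ordered by the cone generated by $G^+$ together with the differences $u-\hat a$, $a\in\mathcal I(M)$; concretely $(g,n)\ge 0$ iff either $n=0$ and $g\in G^+$, or $n\ge 1$ and $g+n\hat a\ge 0$ for some idempotent $a$. One checks that $u$ is a strong unit (each $g$ is dominated by a multiple of some $\hat a$) and that $G$ is a convex $\ell$-ideal of $H$ with $H/G\cong\mathbb{Z}$. Set $N:=\Gamma(H,u)$. The inclusion $M\hookrightarrow N$ then has image $N\cap G=\{g\in G:0\le g\le u\}=\{g\in G^+:g\le\hat a\ \text{for some}\ a\}=M$, which is exactly the kernel of the quotient $N=\Gamma(H,u)\to\Gamma(H/G,\bar u)=\Gamma(\mathbb{Z},1)=\{0,1\}$; hence $M$ is a maximal ideal of $N$ with simple quotient. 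That the inclusion preserves the $EMV$-operations rests on idempotency: for $x,y\le\hat a$ one has $(x+y)\wedge u=(x+y)\wedge\hat a$ because $2\hat a\wedge u=\hat a$ for a characteristic element $\hat a$, so the capped sum $\oplus$ computed in $[0,\hat a]$ agrees with the one computed in $N$, and the same component decomposition handles $\vee$, $\wedge$ and the negations $\lambda_{\hat a}$.

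The main obstacle is the construction and verification of $H$: proving that the cone generated by $G^+$ and the elements $u-\hat a$ really is the positive cone of a \emph{lattice} order (i.e. that the mixed elements $(g,n)$ with $n\ge 1$ admit binary joins and meets), and that $u$ is genuinely a strong unit with $G$ a maximal value so that $H/G$ is the intended simple quotient. Everything else—the coherence of the component decompositions feeding the colimit, and the identification $N\cap G=M$—is bookkeeping with Proposition \ref{3.5} and the standard $\Gamma$-equivalence.
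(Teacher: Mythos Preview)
The present paper does not contain a proof of Theorem~\ref{th:embed}; the result is simply quoted from \cite[Thm 5.21]{Dvz} as background. There is therefore nothing in this paper against which to compare your attempt.

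On its merits, your route via Mundici's $\Gamma$-equivalence is a natural one: realise each block $[0,a]$ as $\Gamma(G_a,\hat a)$, use the Boolean decomposition $[0,b]\cong[0,a]\times[0,\lambda_b(a)]$ from Proposition~\ref{3.5} to turn the inclusions into direct-summand embeddings of $\ell$-groups, pass to the colimit $G$, and adjoin a formal strong unit $u$ above all idempotents. The point you flag as the main obstacle is genuine but tractable. Antisymmetry of your cone is immediate (if $(g,n)\ge 0$ then $n\ge 0$), and closure under addition follows from directedness of $\mathcal I(M)$. For the lattice property the key computation is that for $g\in G_a$ and any idempotent $b\ge a$ one has $g\vee(-n\hat b)=g\vee(-n\hat a)$ in $G$, because in $G_b=G_a\oplus G_{\lambda_b(a)}$ the second coordinate of $g$ is $0$; this stabilisation lets you write $(g,n)^+=(g\vee(-n\hat a),n)$ for $n\ge 1$, and the remaining cases follow. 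You should also note explicitly where properness of $M$ is used: it guarantees $u\notin G$, i.e.\ that no single idempotent dominates all others, so that $H/G\cong\mathbb Z$ is nontrivial and $M$ is a \emph{proper} maximal ideal of $N$.

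A more elementary alternative, closer to how one unitizes a generalized Boolean algebra, is to build $N$ directly on $M\sqcup\{x^*:x\in M\}$ with $x^*$ a formal complement, defining $\oplus$ and $'$ casewise via the local negations $\lambda_a$; coherence again comes from Proposition~\ref{3.5}. Without consulting \cite{Dvz} one cannot say which route the original proof takes.
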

\section{$EMV$-morphisms and $EMV$-algebras}

We know that an $EMV$-algebra $M$ is locally an $MV$-algebra, that is for each idempotent element $a\in M$, $[0,a]$ is an $MV$-algebra.
So, it is natural to introduce a new definition for the concept of a morphism between $EMV$-algebras applying this property; we call it an $EMV$-morphism.  If $M_1$ and $M_2$ are $EMV$-algebras, a new homomorphism needs not be necessary a map from $M_1$ to $M_2$. It is a family of $MV$-homomorphisms $\{f_i\mid i\in I\}$ with special properties. This definition is a generalization of that one introduced in \cite{Dvz}, see Definition \ref{3.4}. The main purpose of the section is to introduce and study the basic properties of $EMV$-morphisms. We introduce an equivalence  $\approx$ between $EMV$-morphisms from an $EMV$-algebra $M_1$ into another one $M_2$ and a composition of two $EMV$-morphisms is established. Standard $EMV$-morphisms will play an important role.

\begin{defn}\label{9.1}
Let $M_1$ and $M_2$ be $EMV$-algebras. An {\em $EMV$-morphism} $f:M_1\ra M_2$ is a family $f=\{f_i \mid i \in I\}$ of $MV$-homomorphisms $f_i:[0,a_i]\to [0,b_i]$ for each $i \in I$, where $\{a_i\mid i \in I\}$ and $\{b_i\mid i \in I\}$ are  non-empty sets of idempotents of $M_1$ and $M_2$, respectively,
	 such that
	\begin{itemize}[nolistsep]
		\item[(i)] $\textsf{e}(f):=\{a_i\mid i\in I\}$ is a full subset of $\mI(M_1)$;
		\item[(ii)] $\{b_i\mid i\in I\}$ is a full subset of $M_2$;
		\item[(iii)] if $i,j\in I$ and $f_i(a_i)\leq f_j(a_j)$, then $f_i(x)=f_j(x) \wedge f_i(a_i)$ for all $x\in [0,a_i\wedge a_j]$;
		\item[(iv)] for each $i,j\in I$, there exists $t\in I$ such that $a_i,a_j\leq a_t$ and $f_i(a_i),f_j(a_j)\leq f_t(a_t)$.
	\end{itemize}
Then $b_i=f_i(a_i)$ for each $i \in I$.
We use $\im(f)$ to denote $\bigcup_{i\in I}\im(f_i):=\bigcup_{i\in I}f_i([0,a_i])$.
The set of all $EMV$-morphisms from $M_1$ to $M_2$ is denoted by $\Hom(M_1,M_2)$.
\end{defn}
We note that if in $f=\{f_i\mid i\in I\}$ every $f_i$ is an $EMV$-homomorphism from the $MV$-algebra $[0,a_i]$ into $M_2$, then each $f_i$ is also an $MV$-homomorphism from the $MV$-algebra $[0,a_i]$ into the $MV$-algebra $[0,b_i]$, where $b_i=f_i(a_i)$. Therefore, without misunderstanding, we can assume formally that each $f_i$ is an $MV$-homomorphism from $[0,a_i]$ into $[0,f_i(a_i)]$.

The basic properties of $EMV$-morphisms are as follows.

\begin{prop}\label{pr:properties}
Let $f=\{f_i\mid i \in I\}$ with $\mathsf{e}(f)=\{a_i\mid i \in I\}$ be an $EMV$-morphism.

\begin{itemize}[nolistsep]
\item[{\rm (1)}] If $f_i(a_i)\le f_j(a_j)$, then for each $x \in [0,a_i\wedge a_j]$, $f_i(x)\le f_j(x)$.

\item[{\rm (2)}] If $f_i(a_i)= f_j(a_j)$, then for each $x \in [0,a_i\wedge a_j]$, $f_i(x)= f_j(x)$.

\item[{\rm (3)}]
For each $a_i$, there exists $a_t$ with $a_i\le a_t$ such that $f_i(a_i)\le f_t(a_t)$ and for each $x\in [0,a_i]$, $f_i(x)\le f_t(x)$.

\item[{\rm (4)}] If for $a_i$, there exists $a_t$ with $a_i\le a_t$ such that $f_i(a_i)=f_t(a_t)$, then for each $x\in [0,a_i]$, $f_i(x)= f_t(x)$.
\end{itemize}
\end{prop}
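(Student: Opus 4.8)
The plan is to derive all four statements directly from conditions (iii) and (iv) of Definition \ref{9.1}, together with the elementary fact recorded in the preliminaries that meets and joins computed inside an interval $[0,a]$ agree with those computed in the ambient $EMV$-algebra. No deeper structural results are needed; the whole proposition is essentially an unwinding of the defining axioms, with each part a short consequence of the previous ones.

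For (1) I would invoke condition (iii) verbatim. Assuming $f_i(a_i)\le f_j(a_j)$, condition (iii) gives $f_i(x)=f_j(x)\wedge f_i(a_i)$ for every $x\in[0,a_i\wedge a_j]$. Since in any lattice a meet lies below each of its arguments, $f_j(x)\wedge f_i(a_i)\le f_j(x)$, whence $f_i(x)\le f_j(x)$. This meet is taken in $M_2$, but by the preliminary observation it coincides with the meet computed in $[0,f_j(a_j)]$, so there is no ambiguity. Statement (2) then follows from (1) applied in both directions: if $f_i(a_i)=f_j(a_j)$, then both $f_i(a_i)\le f_j(a_j)$ and $f_j(a_j)\le f_i(a_i)$ hold, so (1) yields simultaneously $f_i(x)\le f_j(x)$ and $f_j(x)\le f_i(x)$ on $[0,a_i\wedge a_j]$, forcing equality.

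For (3) I would apply condition (iv) with $j:=i$ to obtain an index $t\in I$ with $a_i\le a_t$ and $f_i(a_i)\le f_t(a_t)$. Because $a_i\le a_t$, we have $a_i\wedge a_t=a_i$, so $[0,a_i\wedge a_t]=[0,a_i]$; then part (1), applied to the pair of indices $i$ and $t$, gives $f_i(x)\le f_t(x)$ for all $x\in[0,a_i]$. Statement (4) is obtained the same way: under the hypothesis $f_i(a_i)=f_t(a_t)$ with $a_i\le a_t$, we again have $[0,a_i\wedge a_t]=[0,a_i]$, and part (2) applied to $i$ and $t$ yields $f_i(x)=f_t(x)$ on all of $[0,a_i]$.

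The only point requiring a moment of care --- and hence the main obstacle, though it is a mild one --- is the bookkeeping of where each meet is evaluated: one must be sure that $f_j(x)\wedge f_i(a_i)$ in (iii) denotes the same element whether computed in $M_2$ or in the $MV$-algebra $[0,f_j(a_j)]$, and likewise that $a_i\wedge a_t=a_i$ is read consistently in $M_1$. Both are guaranteed by the compatibility of local and global lattice operations established before Definition \ref{3.4}, after which every step above is a one-line consequence of the axioms.
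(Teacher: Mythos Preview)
Your proof is correct and follows essentially the same approach as the paper: invoke condition (iii) for (1), derive (2) from (iii) (the paper does this directly via $f_i(x)=f_j(x)\wedge f_i(a_i)=f_j(x)\wedge f_j(a_j)=f_j(x)$ rather than by applying (1) twice, but the difference is cosmetic), and obtain (3) and (4) from (iv) together with (1) and (2) respectively. Your added remarks on where meets are computed are not needed for the argument but are not wrong.
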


\begin{proof}
(1) Using (iii) of Definition \ref{9.1}, we have $f_i(x)=f_j(x)\wedge f_i(a_i)\le f_j(x)$, so that $f_i(x)\le f_j(x)$.

(2) Let $f_i(a_i)= f_j(a_j)$. Due to (iii), we get $f_i(x)=f_j(x)\wedge f_i(a_i)=f_j(x)\wedge f_j(a_j) = f_j(x)$.

(3) This follows from (1).

(4) It follows from (2).
\end{proof}

\begin{exm}\label{ex:MV}
If $M_1$ and $M_2$ are $EMV$-algebras with a top element, i.e. they are $MV$-algebras, situation with $EMV$-morphisms is as follows.

(1) If  $f:M_1\ra M_2$ is an $MV$-homomorphism, then $I=\{1\}$ is a full subset of $M_1$, and the singleton $\{f_1\}$ is an $EMV$-morphism.

(2) If $g=\{g_i\mid i\in I\}: M_1\ra M_2$ with $\mathsf{e}(g)=\{a_i\mid i \in I\}$ is an $EMV$-morphism of $EMV$-algebras, then clearly, there is $1=a_j\in \{a_i\mid i\in I\}$ such that $g_j(a_j)=1$ and so $g_j:M_1\ra M_2$ is an $MV$-homomorphism.
It follows that this definition of $EMV$-morphisms in the class of $MV$-algebras coincides with the definition of $MV$-homomorphisms.

(3) Let $f: M_1\to M_2$ be an $MV$-homomorphism. For each $a\in\mI(M_1)$, let $f_a=f|_{_{[0,a]}}$, then $h=\{f_a\mid a\in \mI(M_1)\}:M_1\ra M_2$ is an $EMV$-morphism.

(4) If $S$ is a full subset of  $\mathcal I(M_1)$,  then $\{f_a \mid a \in S\}$ is an $EMV$-morphism.

(5) Let $f:M_1\to M_2$ be an $MV$-homomorphism. If $\{b_j\mid j\in J\}$ is a full subset of $\mI(M_2)$ and $g_j:M_1\ra
[0,b_j]$ is defined by $g_j(x):=f(x)\wedge b_j$, then $\{g_j\mid j\in J\}:M_1\ra M_2$ is an $EMV$-morphism (for more details, see
the proof of Theorem \ref{9.7}).
\end{exm}

\begin{rmk}\label{9.1.1}
Every strong $EMV$-homomorphism of $EMV$-algebras $f:M_1\ra M_2$ can be viewed as an $EMV$-morphism. Indeed,
$\mI(M_1)$ is a full subset of $M_1$ and $\{f(a)\mid a\in \mI(M_1)\}$ is a full subset of $M_2$ (by definition). For each $a\in\mI(M_1)$, set $f_a=f|_{_{[0,a]}}$. Then it can be easily seen that $\{f_a\mid a\in\mI(M_1)\}$ is an $EMV$-morphism, which we denote simply by $f$.	
Moreover, if $a_1,a_2$ are idempotents of $M_1$, then $f_{a_1}(x)=f_{a_2}(x)$ for each $x \in [0,a_1\wedge a_2]$.

Conversely, let $f=\{f_a\mid a \in \mathcal I(M_1)\}$ be an $EMV$-morphism such that for all $a_1,a_2\in \mathcal I(M_1)$, $f_{a_1}(x)=f_{a_2}(x)$ for each $x\in [0,a_1\wedge a_2]$. Then there is a unique strong $EMV$-homomorphism $f_0$ such that $f_a=f_0|_{_{[0,a]}}$ for each $a\in \mathcal I(M_1)$.

Indeed, let $x\in M$ be given. There are idempotents $a_1,a_2\in M_1$. Since $f_{a_1}(x)=f_{a_2}(x)$ for each $x \in [0,a_1\wedge a_2]$, we can define a mapping $f_0:M_1 \to M_2$ by $f_0(x)=f_a(x)$ whenever $x\le a\in \mathcal I(M_1)$. Due to the hypothesis, $f_0$ is defined unambiguously. Now, given $x,y\in M_1$, there is $a \in \mathcal I(M_1)$ such that $x,y\le a$. Hence, for all $z\in [0,a]$, we have $f_0(z)=f_a(z)$ which shows that $f_0$ preserves $\oplus,\vee$ and $\wedge$. In addition, $f_0(\lambda_a(x))=f_a(\lambda_a(x))=\lambda_{f_a(a)}(f_a(x))= \lambda_{f_0(a)}(f_0(x))$, so that $f_0$ is an $EMV$-homomorphism which is also strong. Moreover, $f_a=f_0|_{_{[0,a]}}$ for each $a\in \mathcal I(M_1)$.
\end{rmk}

\begin{exm}\label{exm3.4}
	Let $(M_i;\oplus,',0_i,1_i)$ and $(N_i;\oplus,',\mathsf{0}_i,\mathsf{1}_i)$ be $MV$-algebras for all $i\in\mathbb{N}$, and  $\{f_i:M_i\ra N_i\mid i\in\mathbb{N}\}$ be a family of homomorphisms between $MV$-algebras. For each finite subset $I\s \mathbb{N}$, define $a_I=(a_i)_{i\in\mathbb{N}}$ by (1) $a_i=1_i$, if $i\in I$ and (2) $a_i=0$, if $i\in\mathbb{N}\setminus I$. Clearly, $\{a_I\mid I\in S\}$ is a full subset of the $EMV$-algebra $\sum_{i\in\mathbb{N}} M_i$, see \cite[Ex 3.2(6)]{Dvz}, where $S$ is the set of all finite subsets of $\mathbb{N}$.
In a similar way, we define a full subset $\{b_I\mid I\in S\}$ of the $EMV$-algebra $\sum_{i\in\mathbb{N}} N_i$. For each $I\in S$, we define $f_I:[0,a_I]\ra [0,b_I]$, by $f_I((x_i)_{i\in\mathbb{N}})=(c_i)_{i\in I}$, where $c_i=f_i(x_i)$ for all $x\in I$, and $c_i=\mathsf{0}_i$ otherwise.	 It can be easily seen that $f:=\{f_I\mid I\in S\}: \sum_{i\in\mathbb{N}} M_i\ra \sum_{i\in\mathbb{N}} N_i$ is an	$EMV$-morphism.
\end{exm}

\begin{exm}
	Let $B$ be the set of all finite subsets of $\mathbb{N}$. Then $(B;\cup,\cap,\emptyset)$ is a generalized Boolean algebra.
	For each $i\in\mathbb{N}$, define $A_i=\{1,2,\ldots,i\}$. Clearly, $\{A_i\mid i\in \mathbb{N}\}$ is a full subset
	of $B$. Consider the family $f:=\{f_i:[\emptyset,A_i]\ra [\emptyset,A_{i-1}]\mid i\in\mathbb{N}\}$, where
	$f_i(X)=X\backslash \{i\}$ for all $X\s A_i$.
	Clearly, $\{f_i(A_i)\mid i\in\mathbb{N}\}$ is a full subset of $B$, too.
	It can be easily checked that $f$ is an $EMV$-morphism.
\end{exm}

As we have seen in Example \ref{ex:MV}, if $f:M_1\to M_2$ is an $MV$-homomorphism of two $MV$-algebras $M_1,M_2$, then each restricted map
$f_a:=f|_{_{[0,a]}}:[0,a]\ra [0,f(a)]$ is an $MV$-homomorphism, too. Hence, for every full subset $B$ of $\mI(M_1)$, the set $\{f_a\mid a\in B\}$ is an $EMV$-morphism from $M_1$ to $M_2$, so we can find many $EMV$-homomorphisms induced by $f$. Of course, all of them are considered as $EMV$-morphisms. Therefore, we propose an equivalence relation, called similarity and denoted by $\approx$, on the set $\Hom(M_1,M_2)$ of all $EMV$-morphisms from an $EMV$-algebra $M_1$ into an $EMV$-algebra $M_2$.

\begin{defn}\label{hhhh}
Let $M_1$ and $M_2$ be $EMV$-algebras and let $f=\{f_i\}_{i\in I}:M_1\ra M_2$ and $g=\{g_j\}_{j\in J}:M_1\ra M_2$  be two $EMV$-morphisms
such that $\textsf{e}(f)=\{a_i\mid i\in I\}$ and $\textsf{e}(g)=\{b_j\mid j\in J\}$.
   	We say that they are {\it similar} (denoted by $f\approx g$) if, for each $i\in I$, there exists $j\in J$
   	such that $a_i\leq b_j$ and
   	\[f_i(x)=g_j(x)\wedge f_i(a_i),\quad \forall x\in [0,a_i]. \]
   \end{defn}
In the next proposition we will establish some properties of similar $EMV$-morphisms.

\begin{prop}\label{eq.property}
   	Let $f=\{f_i\}_{i\in I}:M_1\ra M_2$ and $g=\{g_j\}_{j\in J}:M_1\ra M_2$  be two similar $EMV$-morphisms
   	such that $\textsf{e}(f)=\{a_i\mid i\in I\}$ and $\textsf{e}(g)=\{b_j\mid j\in J\}$. Then for all $i\in I$ and
   	$k\in J$, we have
   	\begin{itemize}[nolistsep]
   	\item[{\rm (i)}] if $f_i(a_i)\leq  g_k(b_k)$, then for all $x\in [0,a_i\wedge b_k]$, $f_i(x)=g_k(x)\wedge f_i(a_i)$;
   	
   	\item[{\rm (ii)}] if $f_i(a_i)=g_k(b_k)$, then for all $x\in [0,a_i\wedge b_k]$, $f_i(x)=g_k(x)$;
   	
   	\item[{\rm (iii)}] if $I'\s I$ is such that for all $i\in I$ there exists $k\in I'$ with $a_i\leq a_k$ and $f_i(a_i)\leq f_k(a_k)$, then
   	$h:=\{f_i\mid i\in I'\}$ is an $EMV$-morphism from $M_1$ to $M_2$ which is similar to $f$.
   \end{itemize}
   \end{prop}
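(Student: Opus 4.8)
The plan is to handle the three parts in order, the engine throughout being the internal coherence axioms (iii) and (iv) of Definition~\ref{9.1} for the morphism $g$ (resp.\ $f$), combined with the defining similarity relation. For (i), I would first invoke $f\approx g$: for the given $i\in I$ there is $j\in J$ with $a_i\le b_j$ and $f_i(x)=g_j(x)\wedge f_i(a_i)$ for all $x\in[0,a_i]$. Putting $x=a_i$ gives $f_i(a_i)=g_j(a_i)\wedge f_i(a_i)$, hence $f_i(a_i)\le g_j(a_i)\le g_j(b_j)$. Now I would apply axiom (iv) to $g$ at the pair $j,k\in J$ to obtain $t\in J$ with $b_j,b_k\le b_t$ and $g_j(b_j),g_k(b_k)\le g_t(b_t)$, and then axiom (iii) (equivalently Proposition~\ref{pr:properties}) to write $g_j(y)=g_t(y)\wedge g_j(b_j)$ on $[0,b_j]$ and $g_k(y)=g_t(y)\wedge g_k(b_k)$ on $[0,b_k]$. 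For $x\in[0,a_i\wedge b_k]$ we have $x\le a_i\le b_j$ and $x\le b_k$, so substituting and using absorption with $f_i(a_i)\le g_j(b_j)$ and the hypothesis $f_i(a_i)\le g_k(b_k)$ yields $f_i(x)=g_j(x)\wedge f_i(a_i)=g_t(x)\wedge f_i(a_i)=g_k(x)\wedge f_i(a_i)$, which is the claim.

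Part (ii) should follow immediately from (i): $f_i(a_i)=g_k(b_k)$ gives in particular $f_i(a_i)\le g_k(b_k)$, so (i) applies; moreover $x\le b_k$ forces $g_k(x)\le g_k(b_k)=f_i(a_i)$, whence $g_k(x)\wedge f_i(a_i)=g_k(x)$ and therefore $f_i(x)=g_k(x)$ for all $x\in[0,a_i\wedge b_k]$.

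For (iii) I would verify that $h=\{f_i\mid i\in I'\}$ satisfies the four conditions of Definition~\ref{9.1} and is similar to $f$. Fullness of $\textsf{e}(h)=\{a_i\mid i\in I'\}$ in $\mathcal I(M_1)$ and of $\{f_i(a_i)\mid i\in I'\}$ in $M_2$ both follow by composing the fullness already available for $f$ with the hypothesis that every $a_i$ (resp.\ $f_i(a_i)$) is dominated by some $a_k$ (resp.\ $f_k(a_k)$) with $k\in I'$. Condition (iii) is inherited verbatim from $f$ since $I'\subseteq I$. For condition (iv), given $i,j\in I'$ I would first use (iv) for $f$ to get $s\in I$ with $a_i,a_j\le a_s$ and $f_i(a_i),f_j(a_j)\le f_s(a_s)$, then apply the hypothesis to push $s$ up to some $t\in I'$, so that $a_i,a_j\le a_t$ and $f_i(a_i),f_j(a_j)\le f_t(a_t)$. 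Finally, $h\approx f$ is immediate: for each $i\in I'$ take the same index $i$ in $f$; then $a_i\le a_i$ and, since $f_i(x)\le f_i(a_i)$, we have $f_i(x)=f_i(x)\wedge f_i(a_i)$.

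The main obstacle is part (i): its content lies in reconciling the two possibly different $g$-indices $j$ (produced by similarity) and $k$ (given in the statement), which cannot be compared directly because neither $b_j\le b_k$ nor the reverse need hold. The decisive device is introducing a common upper index $t$ via axiom (iv) and rewriting both $g_j$ and $g_k$ through $g_t$; once this bridge is in place, the verification reduces to routine absorption identities in the distributive lattice $M_2$, and parts (ii) and (iii) become straightforward bookkeeping with the lattice meet and the fullness hypotheses.
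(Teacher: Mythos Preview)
Your proposal is correct and follows essentially the same route as the paper: for (i) you invoke similarity to obtain an index $j\in J$, then bridge $j$ and $k$ via a common upper index $t\in J$ using axiom (iv), and reduce to absorption identities; (ii) is the immediate corollary; (iii) is the routine verification. The only minor deviation is that in (iii) you establish $h\approx f$ (trivially, via the identity index), whereas the paper proves the direction $f\approx h$ using the hypothesis on $I'$; both are valid, and once Proposition~\ref{eq.prop} is in hand the distinction disappears.
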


   \begin{proof}
	(i) Let $i\in I$ and $k\in J$ such that $f_i(a_i)\leq g_k(b_k)$. Since $f\approx g$, then there exists $j\in J$ such that $a_i\leq b_j$,
	$f_i(a_i)\leq g_j(b_j)$ and for all $x\in [0,a_i]$ we have $f_i(x)=g_j(x)\wedge f_i(a_i)$. Put $t\in J$ such that
	$b_t\geq b_k,b_j$ and $g_t(b_t)\geq g_k(b_k),g_j(b_j)$. Then
	\begin{equation*}
	\big( g_k(x)=g_t(x)\wedge g_k(b_k),\ \forall x\in [0,b_k] \big) \quad \& \quad  \big( g_j(x)=g_t(x)\wedge g_j(b_j),\ \forall x\in [0,b_j]  \big).
	\end{equation*}
	Hence, for all $x\in [0,a_i\wedge b_k]$ we have
	\begin{eqnarray*}
		g_k(x)\wedge f_i(a_i)= g_t(x)\wedge g_k(b_k)\wedge f_i(a_i)=g_t(x)\wedge f_i(a_i)=g_t(x)\wedge g_j(b_j)\wedge f_i(a_i)
		= g_j(x)\wedge f_i(a_i)=f_i(x).
	\end{eqnarray*}

    (ii) Let $i\in I$ and $k\in J$ be such that $f_i(a_i)=g_k(b_k)$. Then by (i), for all $x\in [0,a_i\wedge b_k]$, we have
    \[f_i(x)=g_k(x)\wedge f_i(a_i)=g_k(x)\wedge g_k(b_k)=g_k(x).\]

    (iii) Clearly, $\{a_i\mid i\in I'\}$ and $\{f_i(a_i)\mid i\in I'\}$ are full subsets of $\mathcal I(M_1)$ and $\mathcal I(M_2)$, respectively. Moreover,
    the other conditions of Definition \ref{9.1} hold evidently. Thus $h:=\{f_i\mid i\in I'\}:M_1\ra M_2$ is an $EMV$-morphism.
    Now, we show that $f\approx h$. Let $i\in I$. Then by the assumption, there exists $j\in I'$ such that
    $a_i\leq a_j$ and $f_i(a_i)\leq f_j(a_j)$. Since $f$ is an $EMV$-morphism, then for each $x\in [0,a_i]$ we have
    $f_i(x)=f_j(x)\wedge f_i(a_i)$. Therefore, $f\approx h$.
   \end{proof}

   \begin{prop}\label{eq.prop}
   	The relation $\approx$ is an equivalence relation on the set $\Hom(M_1,M_2)$ of all $EMV$-morphisms from an $EMV$-algebra $M_1$ into an $EMV$-algebra $M_2$.    	
   \end{prop}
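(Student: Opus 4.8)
The plan is to verify the three defining properties of an equivalence relation in turn, treating symmetry as the substantial case. Reflexivity is immediate: for $f=\{f_i\}_{i\in I}$ with $\mathsf{e}(f)=\{a_i\mid i\in I\}$ and any $i\in I$, I take the same index on the right-hand side; since $a_i\le a_i$ and each $f_i$ is an order-preserving $MV$-homomorphism, $f_i(x)\le f_i(a_i)$ for all $x\in[0,a_i]$, so $f_i(x)=f_i(x)\wedge f_i(a_i)$, which is exactly the similarity condition. Hence $f\approx f$.

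For transitivity, assume $f\approx g$ and $g\approx h$, with $\mathsf{e}(f)=\{a_i\}_{i\in I}$, $\mathsf{e}(g)=\{b_j\}_{j\in J}$ and $\mathsf{e}(h)=\{c_k\}_{k\in K}$. Fixing $i\in I$, the relation $f\approx g$ supplies $j\in J$ with $a_i\le b_j$ and $f_i(x)=g_j(x)\wedge f_i(a_i)$ on $[0,a_i]$, and $g\approx h$ supplies $k\in K$ with $b_j\le c_k$ and $g_j(y)=h_k(y)\wedge g_j(b_j)$ on $[0,b_j]$. Then $a_i\le c_k$, and for $x\in[0,a_i]\subseteq[0,b_j]$ I substitute to get $f_i(x)=h_k(x)\wedge g_j(b_j)\wedge f_i(a_i)$. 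The one extra observation is $f_i(a_i)\le g_j(b_j)$, obtained by evaluating the first relation at $x=a_i$: $f_i(a_i)=g_j(a_i)\wedge f_i(a_i)\le g_j(a_i)\le g_j(b_j)$. Therefore $g_j(b_j)\wedge f_i(a_i)=f_i(a_i)$ and $f_i(x)=h_k(x)\wedge f_i(a_i)$, so $f\approx h$.

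Symmetry is the main obstacle, because the definition of $\approx$ matches indices only in one direction. Assume $f\approx g$ and fix $j\in J$; I must produce $i\in I$ with $b_j\le a_i$ and $g_j(x)=f_i(x)\wedge g_j(b_j)$ on $[0,b_j]$. The crucial idea is to select a single $i$ satisfying simultaneously $b_j\le a_i$ and $g_j(b_j)\le f_i(a_i)$. To this end I first use fullness of $\mathsf{e}(f)$ in $\mI(M_1)$ to get $i_1$ with $b_j\le a_{i_1}$; since $g_j(b_j)$ is an idempotent of $M_2$ and $\{f_i(a_i)\mid i\in I\}$ is a full subset of $M_2$ by condition (ii) of Definition \ref{9.1}, I get $i_2$ with $g_j(b_j)\le f_{i_2}(a_{i_2})$; finally condition (iv) of Definition \ref{9.1} for $f$ yields $i\in I$ with $a_{i_1},a_{i_2}\le a_i$ and $f_{i_1}(a_{i_1}),f_{i_2}(a_{i_2})\le f_i(a_i)$, so that both $b_j\le a_i$ and $g_j(b_j)\le f_i(a_i)$ hold. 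With this $i$ fixed, $f\approx g$ gives $j'\in J$ with $a_i\le b_{j'}$ and $f_i(y)=g_{j'}(y)\wedge f_i(a_i)$ on $[0,a_i]$; evaluating at $y=a_i$ shows $f_i(a_i)\le g_{j'}(a_i)\le g_{j'}(b_{j'})$, whence $g_j(b_j)\le g_{j'}(b_{j'})$. Condition (iv) for $g$ then produces $s\in J$ with $b_j,b_{j'}\le b_s$ and $g_j(b_j),g_{j'}(b_{j'})\le g_s(b_s)$, and condition (iii) for $g$ gives $g_j(x)=g_s(x)\wedge g_j(b_j)$ on $[0,b_j]$ together with $g_{j'}(x)=g_s(x)\wedge g_{j'}(b_{j'})$ on $[0,b_{j'}]$. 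For $x\in[0,b_j]$ (so $x\le a_i\le b_{j'}$) I then compute
\[
f_i(x)\wedge g_j(b_j)=g_{j'}(x)\wedge f_i(a_i)\wedge g_j(b_j)=g_s(x)\wedge g_{j'}(b_{j'})\wedge f_i(a_i)\wedge g_j(b_j),
\]
which, using $g_j(b_j)\le f_i(a_i)$ and $g_j(b_j)\le g_{j'}(b_{j'})$, collapses to $g_s(x)\wedge g_j(b_j)=g_j(x)$, i.e. $g_j(x)=f_i(x)\wedge g_j(b_j)$. This is precisely the similarity condition for $g\approx f$.

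I anticipate that the only genuine difficulty is the symmetry step: everything there hinges on exploiting the fullness of the codomain idempotents $\{f_i(a_i)\}$ together with the directedness condition (iv) to align one index $i$ with both $b_j\le a_i$ and $g_j(b_j)\le f_i(a_i)$, after which the defining conditions of Definition \ref{9.1} make the computation routine. Combining the three parts shows that $\approx$ is reflexive, symmetric and transitive, hence an equivalence relation on $\Hom(M_1,M_2)$.
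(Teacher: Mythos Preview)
Your proof is correct and follows essentially the same approach as the paper's: reflexivity is immediate, transitivity is a direct substitution, and for symmetry both you and the paper first use fullness of $\mathsf{e}(f)$ and of $\{f_i(a_i)\}$ together with condition (iv) to find a single $i$ with $b_j\le a_i$ and $g_j(b_j)\le f_i(a_i)$, then feed this $i$ back through $f\approx g$ and finish via condition (iii) of Definition~\ref{9.1}. The only difference is that after obtaining your index $j'$ (the paper's $s$) you invoke condition (iv) for $g$ to produce an additional dominating index~$s$; this step is harmless but unnecessary, since $b_j\le a_i\le b_{j'}$ and $g_j(b_j)\le g_{j'}(b_{j'})$ already let you apply condition (iii) directly to the pair $j,j'$.
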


   \begin{proof}
   Let $M_1$ and $M_2$ be two $EMV$-algebras.
   	Clearly, $\approx$ is reflexive.
   	Let $f=\{f_i\mid i\in I\}:M_1\ra M_2$ and $g=\{g_j\mid j\in J\}:M_1\ra M_2$  be two $EMV$-morphisms such that $f\approx g$.
   	Put $j\in J$. Since $\mathsf{e}(f)$ is a full subset of $M_1$, then there exists $i\in I$ such that $b_j\leq a_i$. Similarly, since
   	$\{f_i(a_i)\mid i\in I\}$ is a full subset of $M_2$, then there is $z\in I$ such that $f_z(a_z)\geq g_j(b_j)$. So, we can find $w\in I$ with
   	$a_i,a_z\leq a_w$ and $f_i(a_i),f_z(a_z)\leq f_w(a_w)$. Without loss of generality we assume that $a_i=a_w$. Then
   	\begin{equation}
   	\label{eq.prop.R1} b_j\leq a_i,\quad g_j(b_j)\leq f_i(a_i).
   	\end{equation}
   	From $f\approx g$ it follows that there exists $s\in J$ such that $b_s\geq a_i$, $f_i(a_i)\leq g_s(b_s)$ and
   	\begin{equation}
   	\label{eq.prop.R2} \forall x\in [0,a_i],\quad f_i(x)=g_s(x)\wedge f_i(a_i).
   	\end{equation}
   	Since $g_j(b_j)\leq g_s(b_s)$, then
   	\begin{equation}
   	\label{eq.prop.R3} \forall x\in [0,b_j\wedge b_s]=[0,b_j],\quad g_j(x)=g_s(x)\wedge g_j(b_j).
   	\end{equation}
   	and so for each $x\in [0,b_j]$ we have
   	\begin{eqnarray*}
   	f_i(x)\wedge g_j(b_j)&=& (g_s(x)\wedge f_i(a_i))\wedge g_j(b_j), \quad \mbox{ by (\ref{eq.prop.R2}) } \\
   	&=& (g_s(x)\wedge g_j(b_j))\wedge f_i(a_i)=g_j(x)\wedge f_i(a_i), \quad \mbox{ by (\ref{eq.prop.R3}) } \\
   	&=& (g_j(x)\wedge g_j(b_j))\wedge f_i(a_i), \quad \mbox{  since $x\leq b_j$} \\
   	&=& g_j(x)\wedge (g_j(b_j)\wedge f_i(a_i))=g_j(x)\wedge g_j(b_j), \quad \mbox{ by (\ref{eq.prop.R1}) } \\
   	&=& g_j(x).
   	\end{eqnarray*}
   	Hence, $\approx$ is symmetric.
   	
   	To prove that $\approx$ is transitive, we let $h=\{h_i\mid i\in K\}:M_1\ra M_2$ be an $EMV$-morphism
   	with $\mathsf{e}(h)=\{c_i\mid i\in K\}$ such that $g\approx h$.   Put $i\in I$. Then
   	\begin{eqnarray}
   	f\approx g\ \Rightarrow \exists\, j\in J: a_i\leq b_j,\ f_i(a_i)\leq g_j(b_j)\ \ \& \ \ \forall x\in [0,a_i] \ f_i(x)=g_j(x)\wedge f_i(a_i). \\
   	g\approx h \ \Rightarrow \exists\, k\in K: b_j\leq c_k,\ g_j(b_j)\leq h_k(c_k)\ \ \& \ \  \forall x\in [0,b_j] \ g_j(x)= h_k(x)\wedge g_j(b_j).
   	\end{eqnarray}
   	and so for all $x\in [0,a_i]$ we have
   	\[f_i(x)=g_j(x)\wedge f_i(a_i)=h_k(x)\wedge g_j(b_j)\wedge f_i(a_i)=h_k(x)\wedge f_i(a_i).\]
   	That is, $f\approx h$. Therefore, $\approx$ is an equivalence relation.
   \end{proof}

Now we introduce a composition  of two $EMV$-morphisms.

\begin{prop}\label{9.2}
	Let $M_1$, $M_2$ and $M_3$ be $EMV$-algebras and $f=\{f_i\mid i\in I\}:M_1\ra M_2$ and
	$h=\{h_j\mid j\in J\}:M_2\ra M_3$ be 	 $EMV$-morphisms with $\mathsf{e}(f)=\{a_i\mid i\in I\}$
	and $\mathsf{e}(h)=\{b_j\mid j\in J\}$. If  $U=\{(i,j)\in I\times J\mid f_i(a_i)\leq b_j\}$. For each $(i,j)\in U$, we define
	$g_{i,j}:[0,a_i]\ra [0,h_j\circ f_i(a_i)]$ by $g_{i,j}=h_j\circ f_i$.
	Then $g=\{g_{i,j}\mid (i,j)\in U\}$ is an $EMV$-morphism, which is denoted $h\circ f$.
\end{prop}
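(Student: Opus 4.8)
The plan is to verify the four defining conditions of Definition \ref{9.1} for the family $g=\{g_{i,j}\mid (i,j)\in U\}$, after first recording that each $g_{i,j}=h_j\cc f_i$ really is an $MV$-homomorphism: since $(i,j)\in U$ means $f_i(a_i)\le b_j$, the map $f_i$ sends $[0,a_i]$ into $[0,f_i(a_i)]\s[0,b_j]$, so $h_j$ may be applied, and a composite of $MV$-homomorphisms is an $MV$-homomorphism with top value $g_{i,j}(a_i)=h_j(f_i(a_i))$. For condition (i), given $c\in\mI(M_1)$ I would first pick $i$ with $c\le a_i$ (fullness of $\mathsf{e}(f)$), then pick $j$ with $f_i(a_i)\le b_j$ (fullness of $\mathsf{e}(h)$ in $\mI(M_2)$, applied to the idempotent $f_i(a_i)$); this yields $(i,j)\in U$ with $c\le a_i$, so $\{a_i\mid (i,j)\in U\}$ is full.

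Condition (ii) is more delicate because of the direction of the inequality defining $U$. Given $d\in\mI(M_3)$, I would choose $j_0$ with $d\le h_{j_0}(b_{j_0})$ (fullness of $\{h_j(b_j)\}$ in $M_3$), then $i_0$ with $b_{j_0}\le f_{i_0}(a_{i_0})$ (fullness of $\{f_i(a_i)\}$ in $M_2$), then $t'$ with $f_{i_0}(a_{i_0})\le b_{t'}$. Applying condition (iv) of Definition \ref{9.1} for $h$ to $j_0,t'$ gives $t\in J$ with $b_{j_0},b_{t'}\le b_t$ and $h_{j_0}(b_{j_0}),h_{t'}(b_{t'})\le h_t(b_t)$; then $f_{i_0}(a_{i_0})\le b_{t'}\le b_t$, so $(i_0,t)\in U$, and combining condition (iii) for $h$ with monotonicity of $h_t$ yields $d\le h_{j_0}(b_{j_0})\le h_t(b_{j_0})\le h_t(f_{i_0}(a_{i_0}))=g_{i_0,t}(a_{i_0})$, proving fullness of $\{g_{i,j}(a_i)\}$ in $M_3$.

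The common device for conditions (iii) and (iv) is to pass to a ``common refinement'': given the indices in play I would use condition (iv) for $f$ to get $s\in I$ dominating the relevant $a$'s with $f_s(a_s)$ dominating the relevant $f_\bullet(a_\bullet)$, and then use condition (iv) for $h$ together with fullness to get $t\in J$ with $f_s(a_s)\le b_t$ and $h_t(b_t)$ dominating the relevant $h_\bullet(b_\bullet)$, so that $(s,t)\in U$. For condition (iv) this already finishes: condition (iii) for $h$ plus monotonicity of $h_t$ give $g_{i,j}(a_i)=h_j(f_i(a_i))\le h_t(f_i(a_i))\le h_t(f_s(a_s))=g_{s,t}(a_s)$, and symmetrically for $g_{k,l}(a_k)$.

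The main obstacle is condition (iii), and here the refinement is the essential tool. Fixing $(i,j),(k,l)\in U$ with $g_{i,j}(a_i)\le g_{k,l}(a_k)$ and $x\in[0,a_i\wedge a_k]$, I would take $s,t$ as above dominating $i,k$ and $j,l$. Condition (iii) for $f$ gives $f_i(x)=f_s(x)\wedge f_i(a_i)$ and $f_k(x)=f_s(x)\wedge f_k(a_k)$; since $f_s(x)\le f_s(a_s)\le b_t$, I may feed these into $h_t$, which preserves $\wedge$, and combine with condition (iii) for $h$ to obtain $g_{i,j}(x)=h_t(f_s(x))\wedge g_{i,j}(a_i)$ and $g_{k,l}(x)=h_t(f_s(x))\wedge g_{k,l}(a_k)$. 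The hypothesis $g_{i,j}(a_i)\le g_{k,l}(a_k)$ then lets me absorb $g_{k,l}(a_k)$: $g_{k,l}(x)\wedge g_{i,j}(a_i)=h_t(f_s(x))\wedge g_{k,l}(a_k)\wedge g_{i,j}(a_i)=h_t(f_s(x))\wedge g_{i,j}(a_i)=g_{i,j}(x)$, which is exactly (iii). The care needed throughout is to keep every argument of $h_j,h_l,h_t$ inside the correct interval $[0,b_\bullet]$ before invoking the homomorphism property; the refinement by $t$ with $f_s(a_s)\le b_t$ is precisely what guarantees this, and once all four conditions are checked, $g=h\cc f$ is an $EMV$-morphism.
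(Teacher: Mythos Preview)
Your proposal is correct and follows essentially the same approach as the paper's proof: both verify the four conditions of Definition~\ref{9.1} by passing to a common refinement index $(s,t)$ (the paper's $(z,k)$) obtained from condition (iv) of $f$ and $h$ together with fullness, and both reduce condition (iii) to the identity $g_{i,j}(x)=h_t(f_s(x))\wedge g_{i,j}(a_i)$ via condition (iii) of $f$ and of $h$. Your organization of the condition (iii) computation is somewhat more streamlined than the paper's longer chain of equalities, but the argument is the same.
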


\begin{proof}
Clearly, $\mathsf{e}(g)=\{a_i\mid (i,j)\in U\}$ is a full subset of $M_1$ and $\{g_{i,j}(a_i)\mid (i,j)\in U\}\s \mI(M_3)$. Put $z\in M_3$.
Then there is $j\in J$ such that $h_j(b_j)\geq z$. Similarly, there exist $i\in I$ and $t\in J$ such that $f_i(a_i)\geq b_j$,
$f_i(a_i)\leq b_t$, and $h_t(b_t)\geq h_j(b_j)$. Thus $h_t(f_i(a_i))\geq h_t(b_j)\geq h_j(b_j)\geq z$.
That is,  $\{g_{i,j}(a_i)\mid (i,j)\in U\}$ is a full subset of $\mI(M_3)$. Now, let $(i,j),(s,t)\in U$ such that
$(h_j\circ f_i)(a_i)\leq (h_t\circ f_s)(a_s)$. We claim that
\begin{equation}\label{9.2r1}
\forall x\in [0,a_i\wedge a_s]\quad (h_j\circ f_i)(x)=(h_t\circ f_s)(x)\wedge (h_j\circ f_i)(a_i).
\end{equation}
Let $z\in I$ such that $a_z\geq a_i,a_s$ and $f_z(a_z)\geq f_i(a_i),f_s(a_s)$. Then
\begin{eqnarray*}
\exists\, k_1\in J: b_{k_1}\geq f_z(a_z) &\Rightarrow& \exists\, k_2\in J : b_{k_2}\geq b_{k_1},b_t\  \&  \
h_{k_2}(b_{k_2})\geq h_{k_1}(b_{k_1}),h_t(b_t)  \\
&\Rightarrow & \exists\, k\in J: b_k\geq b_j,b_t \ \& \ h_k(b_k)\geq h_j(b_j),h_t(b_t) \\
&\Rightarrow & \Big(h_j(x)=h_k(x)\wedge h_j(b_j),  x\in [0,b_j]\Big) \ \& \
\Big(h_t(x)=h_k(x)\wedge h_t(b_t), x\in [0,b_t]\Big).
\end{eqnarray*}
It follows that for each $x\in [0,a_i\wedge a_j]$ we have
\begin{eqnarray*}
	h_t(f_s(x))\wedge h_j(f_i(a_i))&=& h_k(f_s(x))\wedge h_t(b_t)\wedge h_k(f_i(a_i))\wedge h_j(b_j)\\
&=&	h_k(f_s(x)\wedge f_i(a_i))\wedge h_t(b_t)\wedge h_j(b_j) \\
	&=& h_k(f_z(x)\wedge f_s(a_s)\wedge f_i(a_i))\wedge h_t(b_t)\wedge h_j(b_j)\\
&=& h_k(f_i(x)\wedge f_s(a_s))\wedge h_t(b_t)\wedge h_j(b_j)\\
	&=& h_k(f_i(x))\wedge h_k(f_s(a_s))\wedge h_t(b_t)\wedge h_j(b_j) \\
	&=& h_k(f_i(x))\wedge h_t(f_s(a_s))\wedge h_j(b_j),\mbox{  since $b_t\geq f_s(a_s)$} \\
	&=& h_j(f_i(x))\wedge h_t(f_s(a_s)) \\
	&=& h_j(f_i(x)),\mbox{  since $h_j(f_i(x))\leq h_j(f_i(a_i))\leq h_t(f_s(a_s))$}.
\end{eqnarray*}
Thus (\ref{9.2r1}) holds.
Finally, let $(i,j)$ and $(s,t)$ be arbitrary elements of $U$. Then by the assumption, we can find $z\in I$ such that
$a_i,a_s\leq a_z$ and $f_i(a_i),f_s(a_s)\leq f_z(a_z)$. Put $k\in J$ with
$b_k\geq f_z(a_z),b_j,b_t$ and $h_k(b_k)\geq h_j(b_j),h_t(b_t)$.
Since $h_k(b_k)\geq h_j(b_j),h_t(b_t)$, then
\[h_j(f_i(a_i))=h_k(f_i(a_i))\wedge h_j(b_j)\quad \& \quad h_t(f_s(a_s))=h_k(f_s(a_s))\wedge h_t(a_t)\]
and so
$h_k(f_z(a_z))\geq h_k(f_i(a_i))\geq h_j(f_i(a_i))$. In a similar way, $h_k(f_z(a_z))\geq h_k(f_i(a_i))\geq h_t(f_s(a_s))$. Hence,
there exists $(z,k)\in U$ such that $a_z\geq a_i,a_s$ and
$(h_k\circ f_z)(a_z)\geq (h_j\circ f_i)(a_i), (h_t\circ f_s)(a_s)$.
Therefore, $h\circ f$ is an $EMV$-morphism.
\end{proof}

The next proposition shows that the equivalence relation $\approx$ is compatible with the composition $\circ$.

\begin{prop}\label{9.2.1}
Let $f=\{f_i\}_{i\in I}:M_1\ra M_2$ and $g=\{g_j\}_{j\in J}:M_1\ra M_2$  be
two $EMV$-morphisms, where
$\e(f)=\{a_i\mid i\in I\}$ and $\e(g)=\{b_j\mid j\in J\}$ such that
$f \approx g$.
\begin{itemize}
\item[{\rm (i)}] If $h=\{h_k\mid k\in K\}:M_2\ra M_3$ is an $EMV$-morphism
with $\e(h)=\{c_i\mid i\in K\}$, then $h\circ f\approx h\circ g$.
\item[{\rm (ii)}] If $v=\{v_k\mid k\in K\}:M_0\ra M_1$ is an
$EMV$-morphism with $\e(v)=\{d_i\mid i\in K\}$, then $f\circ
v\approx g\circ v$.
\item[{\rm(iii)}] If $g_1,g_2\in \Hom(M_1,M_2)$, $g_1\approx g_2$, and $f_1,f_2 \in \Hom(M_2,M_3)$, $f_1\approx f_2$, then $f_1\circ g_1 \approx f_2\circ g_2$.
\end{itemize}
\end{prop}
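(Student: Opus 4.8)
The plan is to deduce (iii) from (i), (ii) and the transitivity of $\approx$ (Proposition \ref{eq.prop}), so the genuine work lies in the two one-sided statements. Given $g_1\approx g_2$ in $\Hom(M_1,M_2)$ and $f_1\approx f_2$ in $\Hom(M_2,M_3)$, I would first apply (i) with the \emph{fixed} morphism $f_1$ to get $f_1\circ g_1\approx f_1\circ g_2$, then apply (ii) with the fixed morphism $g_2$ to get $f_1\circ g_2\approx f_2\circ g_2$; transitivity then yields $f_1\circ g_1\approx f_2\circ g_2$. Since $\approx$ has already been shown symmetric, in proving (i) and (ii) it suffices to verify only the one-directional matching condition of Definition \ref{hhhh}: for each index of the left composite, exhibit a matching index of the right composite.

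For (ii) [pre-composition], write $v=\{v_k\}_{k\in K}$ with $\e(v)=\{d_k\}$, $f=\{f_i\}_{i\in I}$ with $\e(f)=\{a_i\}$, and $g=\{g_j\}_{j\in J}$ with $\e(g)=\{b_j\}$. By Proposition \ref{9.2}, $f\circ v$ is indexed by the pairs $(k,i)$ with $v_k(d_k)\le a_i$ and carries $f_i\circ v_k$ on $[0,d_k]$. Given such a pair, $f\approx g$ supplies $j\in J$ with $a_i\le b_j$ and $f_i(y)=g_j(y)\wedge f_i(a_i)$ for all $y\in[0,a_i]$. The key simplification is that I may reuse the same $v$-index: since $v_k(d_k)\le a_i\le b_j$, the pair $(k,j)$ indexes $g\circ v$, and its domain idempotent is again $d_k$. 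For $x\in[0,d_k]$ one has $v_k(x)\in[0,a_i]$, whence $f_i(v_k(x))=g_j(v_k(x))\wedge f_i(a_i)$; a short meet manipulation using $v_k(x)\le v_k(d_k)\le a_i$ (so $f_i(v_k(x))\le f_i(v_k(d_k))\le f_i(a_i)$) upgrades this to $f_i(v_k(x))=g_j(v_k(x))\wedge f_i(v_k(d_k))$, which is exactly the required similarity identity for the composites.

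The harder part is (i) [post-composition], and the main obstacle is the correct choice of the $h$-index. Write $h=\{h_k\}_{k\in K}$ with $\e(h)=\{c_k\}$; a pair $(i,k)$ indexes $h\circ f$ precisely when $f_i(a_i)\le c_k$. Similarity gives $j$ with $a_i\le b_j$ and $f_i(y)=g_j(y)\wedge f_i(a_i)$ on $[0,a_i]$, and evaluating at $a_i$ shows $f_i(a_i)\le g_j(b_j)$. The delicate point is that merely picking an arbitrary $k'$ with $g_j(b_j)\le c_{k'}$ (so that $(j,k')$ indexes $h\circ g$) is not enough, since $h_k$ and $h_{k'}$ then cannot be compared. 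Instead I would use fullness of $\e(h)$ to choose $l$ with $g_j(b_j)\le c_l$, and then condition (iv) of Definition \ref{9.1} for $h$, applied to the pair $k,l$, to obtain $k'$ with $c_k,c_l\le c_{k'}$ and $h_k(c_k)\le h_{k'}(c_{k'})$. This single choice yields simultaneously that $(j,k')$ indexes $h\circ g$ and the comparability $h_k(c_k)\le h_{k'}(c_{k'})$ needed to invoke condition (iii) of Definition \ref{9.1}, namely $h_k(y)=h_{k'}(y)\wedge h_k(c_k)$ for all $y\in[0,c_k]$.

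With this choice, for $x\in[0,a_i]$ the matching identity follows from the chain
\begin{align*}
(h_k\circ f_i)(x)&=h_{k'}(f_i(x))\wedge h_k(c_k)\\
&=h_{k'}\big(g_j(x)\wedge f_i(a_i)\big)\wedge h_k(c_k)\\
&=h_{k'}(g_j(x))\wedge \big(h_{k'}(f_i(a_i))\wedge h_k(c_k)\big)\\
&=(h_{k'}\circ g_j)(x)\wedge (h_k\circ f_i)(a_i),
\end{align*}
where the first and last equalities use condition (iii) of Definition \ref{9.1} (at $y=f_i(x)$ and $y=f_i(a_i)$, respectively), the second uses $f\approx g$, and the third uses that the $MV$-homomorphism $h_{k'}$ preserves $\wedge$. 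As $(i,k)$ was arbitrary, this gives $h\circ f\approx h\circ g$. I expect the only genuine subtlety in the whole proposition to be this coordinated choice of $k'$ in (i); parts (ii) and (iii) are then essentially bookkeeping on top of the already established composition (Proposition \ref{9.2}) and equivalence (Proposition \ref{eq.prop}).
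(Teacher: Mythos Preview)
Your proof is correct and follows essentially the same approach as the paper's. In (i) you are slightly more explicit than the paper in constructing the $h$-index: you first invoke fullness of $\e(h)$ to get $l$ with $g_j(b_j)\le c_l$ and then apply condition (iv) to the pair $k,l$, whereas the paper simply asserts the existence of $t$ with $c_k,g_s(b_s)\le c_t$ and $h_k(c_k)\le h_t(c_t)$ in one breath; the resulting computation is the same chain of equalities. Your treatments of (ii) (reusing the same $v$-index and upgrading the meet with $f_i(a_i)$ to a meet with $f_i(v_k(d_k))$) and of (iii) (applying (i), then (ii), then transitivity) match the paper verbatim.
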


\begin{proof}
(i) Set $U=\{(i,k)\in I\times K\mid f_i(a_i)\leq c_k\}$ and $V=\{(j,k)\in
J\times K\mid g_j(b_j)\leq c_k\}$. Then
\[h\circ f=\{h_k\circ f_i\mid (i,k)\in U\}\quad \& \quad h\circ
g=\{h_k\circ g_j\mid (j,k)\in V \}.\]
We  show that for each $(i,k)\in U$, there exists $(s,t)\in V$ such that
$a_i\leq b_s$,
$(h_k\circ f_i)(a_i)\leq (h_t\circ g_s)(b_s)$ and
\[\forall x\in [0,a_i]\ \  (h_k\circ f_i)(x)=(h_t\circ g_s)(x)\wedge
(h_k\circ f_i)(a_i). \]
Put $(i,k)\in U$. Then $f_i(a_i)\leq c_k$. Since $f\approx g$, given $i$ there is $s\in J$ such that $a_i\leq b_s$, $f_i(a_i)\leq g_s(b_s)$ and
\[\forall x\in [0,a_i]\quad   f_i(x)=g_s(x)\wedge f_i(a_i).\]
Let $t\in K$ such that $c_k,g_s(b_s)\leq c_t$ and $h_k(c_k)\leq h_t(c_t)$.
Then
\begin{equation}
\label{9.2.1R1} h_k(f_i(a_i))\leq h_t(f_i(a_i))\leq h_t(g_s(b_s)),
\end{equation}
whence $h_k(z)=h_t(z)\wedge h_k(c_k)$ for all $z\in [0,c_k]$
and so for all $x\in [0,a_i]$, we have
$h_k(f_i(x))=h_t(f_i(x))\wedge h_k(c_k)=h_t(g_s(x)\wedge f_i(a_i))\wedge
h_k(c_k)=h_t(g_s(x))\wedge h_t(f_i(a_i))\wedge h_k(c_k)=
h_t(g_s(x))\wedge h_k(f_i(a_i))$. That is, $h\circ f\approx h\circ g$.

(ii) Set $W:=\{(k,i)\in K\times I\mid v_k(d_k)\leq a_i\}$ and
$Z:=\{(k,j)\in K\times J\mid v_k(d_k)\leq b_j\}$.
Then $f\circ v=\{f_i\circ v_k\mid (k,i)\in W\}$, $g\circ
v=\{g_j\circ v_k\mid (k,j)\in Z\}$,
$\e(f\circ v)=\{d_k\mid (k,i)\in W\}$ and $\e(g\circ v)=\{d_k\mid
(k,j)\in Z\}$. It suffices to show that
for each $(k,i)\in W$, there exists $(s,j)\in Z$ such that $d_k\leq d_s$,
$(f_i\circ v_k)(d_k)\leq (g_j\circ v_s)(d_s)$ and
for all $u\in [0,d_k]$, $(f_i\circ v_k)(u)=(g_j\circ
v_s)(u)\wedge (f_i\circ v_k)(d_k)$.
Let $(k,i)\in W$. Since $f\approx g$, there is $j\in J$ such that $a_i\leq
b_j$ and for all $x\in [0,a_i]$, we have
$f_i(x)=g_j(x)\wedge f_i(a_i)$. Set $s=k$. Then for each $u\in [0,d_k]$,
$v_k(u)\in [0,a_i]$ and
$f_i(v_k(u))=g_j(v_k(u))\wedge f_i(a_i)$. Since $u\leq d_k\leq
a_i$ and $v_k(d_k) \le a_i$, then $f_i(v_k(u))\leq f_i(v_k(d_k))\leq f_i(a_i)$ and so
\[f_i(v_k(u))=g_j(v_k(u))\wedge f_i(a_i)\wedge
f_i(v_k(d_k))=g_j(v_k(u))\wedge f_i(v_k(d_k)).\]
That is, $(f_i\circ v_k)(u)=(g_j\circ v_s)(u)\wedge (f_i\circ
v_k)(d_k)$ for each $u\in [0,d_k]$. Therefore, $f\circ v\approx g\circ v$.

(iii) By (i) and (ii), we have $f_1\circ g_1 \approx f_1 \circ g_2 \approx f_2\circ g_2$.
\end{proof}

In the following proposition we show that the operation $\circ$ on the set of all $EMV$-morphisms is $\approx$-associative, i.e. $h\circ(g\circ f)\approx (h\circ g)\circ f$ whenever the corresponding compositions are defined.

\begin{prop}\label{pr:assoc}
The operation $\circ$ is $\approx$-associative. If $Id_{M_1}:M_1\ra M_1$, $Id_{M_2}:M_2\ra M_2$, and $f \in \Hom(M_1,M_2)$, then $f\circ Id_{M_1}\approx f$ and $Id_{M_2}\circ f\approx f$.
\end{prop}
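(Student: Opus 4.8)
The plan is to dispatch the two unit laws by direct inspection of the composition formula in Proposition \ref{9.2}, and then to reduce $\approx$-associativity to the cofinality criterion of Proposition \ref{eq.property}(iii) together with the symmetry of $\approx$ established in Proposition \ref{eq.prop}.

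For the units, recall from Remark \ref{9.1.1} that, viewed as an $EMV$-morphism, $Id_{M_1}=\{\mathrm{id}_{[0,a]}\mid a\in\mI(M_1)\}$ with $\mathsf{e}(Id_{M_1})=\mI(M_1)$ and $\mathrm{id}_{[0,a]}(a)=a$. Unwinding Proposition \ref{9.2}, $f\circ Id_{M_1}$ is the family $\{f_i|_{[0,a]}:[0,a]\to[0,f_i(a)]\mid i\in I,\ a\in\mI(M_1),\ a\le a_i\}$, while $Id_{M_2}\circ f$ is $\{f_i:[0,a_i]\to[0,f_i(a_i)]\mid i\in I,\ b\in\mI(M_2),\ f_i(a_i)\le b\}$, since $\mathrm{id}_{[0,b]}\circ f_i=f_i$ on $[0,a_i]$. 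In the first case, for the component indexed by $(a,i)$ I take the witness $j=i$ in Definition \ref{hhhh}: here $a\le a_i$ and, because $x\le a$ forces $f_i(x)\le f_i(a)$, one has $f_i(x)=f_i(x)\wedge f_i(a)$ for all $x\in[0,a]$. In the second case, for the component $f_i$ I again take $j=i$ and use $f_i(x)\le f_i(a_i)$. Hence $f\circ Id_{M_1}\approx f$ and $Id_{M_2}\circ f\approx f$.

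For associativity, write $f=\{f_i\}_{i\in I}:M_1\to M_2$, $g=\{g_j\}_{j\in J}:M_2\to M_3$, $h=\{h_k\}_{k\in K}:M_3\to M_4$, with idempotents $\{a_i\},\{b_j\},\{c_k\}$. Expanding Proposition \ref{9.2} twice, both sides are families of the same composites $h_k\circ g_j\circ f_i$ on $[0,a_i]$, but with different index sets: $h\circ(g\circ f)$ is indexed by $P=\{(i,j,k)\mid f_i(a_i)\le b_j,\ g_j(f_i(a_i))\le c_k\}$, whereas $(h\circ g)\circ f$ is indexed by $Q=\{(i,j,k)\mid g_j(b_j)\le c_k,\ f_i(a_i)\le b_j\}$. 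Since $f_i(a_i)\le b_j$ forces $g_j(f_i(a_i))\le g_j(b_j)$, every triple of $Q$ lies in $P$, so $(h\circ g)\circ f$ is literally the subfamily of $h\circ(g\circ f)$ indexed by $Q\subseteq P$; it remains to verify the cofinality hypothesis of Proposition \ref{eq.property}(iii).

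The core step, which I expect to be the main obstacle, is to show that for every $(i,j,k)\in P$ there is an index in $Q$ dominating it in both the domain and the image idempotent; I would keep $i$ and $j$ fixed and enlarge only $k$. By fullness of $\mathsf{e}(h)=\{c_k\}$ in $\mI(M_3)$ there is $k_0$ with $g_j(b_j)\le c_{k_0}$, and applying directedness of $h$ (Definition \ref{9.1}(iv)) to $k,k_0$ produces $k'$ with $c_k,c_{k_0}\le c_{k'}$ and $h_k(c_k)\le h_{k'}(c_{k'})$. Then $g_j(b_j)\le c_{k_0}\le c_{k'}$ places $(i,j,k')$ in $Q$, its domain idempotent is again $a_i$, and since $g_j(f_i(a_i))\le c_k\le c_{k'}$, Proposition \ref{pr:properties}(1) applied to $h$ gives $h_k(g_j(f_i(a_i)))\le h_{k'}(g_j(f_i(a_i)))$, i.e. the image idempotent of the $P$-component is dominated by that of the chosen $Q$-component. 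Proposition \ref{eq.property}(iii) now yields $(h\circ g)\circ f\approx h\circ(g\circ f)$, and the symmetry of $\approx$ (Proposition \ref{eq.prop}) completes the proof.
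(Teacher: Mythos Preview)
Your proof is correct. The unit laws are handled essentially as in the paper.

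For associativity, however, you and the paper go in opposite directions. You observe $Q\subseteq P$ and then work to show that $Q$ is \emph{cofinal} in $P$ (every $P$-index is dominated by a $Q$-index), so that Proposition~\ref{eq.property}(iii) applies; this costs you the construction of $k'$ via fullness and directedness of $h$. The paper instead exploits $Q\subseteq P$ directly from Definition~\ref{hhhh}: for each component of $(h\circ g)\circ f$ indexed by $(i,j,k)\in Q$, the very same triple lies in $P$ and names the identical map $h_k\circ g_j\circ f_i$ on $[0,a_i]$, so the witness condition $f_{(i,j,k)}(x)=g_{(i,j,k)}(x)\wedge f_{(i,j,k)}(a_i)$ is trivially met. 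That establishes $(h\circ g)\circ f\approx h\circ(g\circ f)$ in one line, with symmetry of $\approx$ giving the rest. Your route is perfectly valid and has the virtue of illustrating how Proposition~\ref{eq.property}(iii) is meant to be used, but the paper's argument is shorter because it checks Definition~\ref{hhhh} in the direction where the witness is already present.
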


\begin{proof}
(1) Let $f=\{f_i\mid i \in I\}:M_1\ra M_2$, $g=\{g_j\mid j \in J\}:M_2\ra M_3$ and $h=\{h_k\mid k \in K\}:M_3\ra M_4$ be $EMV$-morphisms with $\mathsf{e}(f)=\{a_i\mid i\in I\}$,
$\mathsf{e}(g)=\{b_j\mid j\in J\}$ and $\mathsf{e}(h)=\{c_k\mid k\in K\}$. Then
$g\circ f=\{g_j\circ f_i\mid (i,j)\in I\times J,\ f_i(a_i)\leq b_j\}$,
$h\circ g=\{h_k\circ g_j\mid (j,k)\in J\times K,\ g_j(b_j)\leq c_k\}$,
$\mathsf{e}(g\circ f)=\{a_i\mid (i,j)\in I\times J,\ f_i(a_i)\leq b_j\}$
and $\mathsf{e}(h\circ g)=\{b_j\mid (j,k)\in J\times K,\ g_j(b_j)\leq c_k\}$. Hence,
$h\circ(g\circ k)=\{h_k\circ (g_j\circ f_i)\mid i\in I,\ j\in J,\ k\in K,\ (g_j(f_i(a_i)))\leq c_k,\ f_i(a_i)\leq b_j\}$ and
$(h\circ g)\circ f=\{(h_k\circ g_j)\circ f_i\mid i\in I,\ j\in J,\ k\in K,\ f_i(a_i)\leq b_j,\ g_j(b_j)\leq c_k\}$.
Assume that $f_i(a_i)\leq b_j$  and $g_j(b_j)\leq c_k$. Then clearly $g_j(f_i(a_i))\leq c_k$ and, for each $x\in [0,a_i]$, we have
$((h_k\circ g_j)\circ f_i)(x)=(h_k\circ(g_j\circ f_i))(x)=(h_k\circ(g_j\circ f_i))(x)\wedge ((h_k\circ g_j)\circ f_i)(a_i)$.
So, by Definition \ref{hhhh}, $h\circ(g\circ f)\approx (h\circ g)\circ f$.

(2) Now, consider the identity maps $Id_{M_1}:M_1\ra M_1$ and $Id_{M_2}:M_2\ra M_2$. By Remark \ref{9.1.1}, they are $EMV$-morphisms.
So that, $Id_{M_1}=\{Id_{[0,a]}\mid a\in \mI(M_1)\}$ and $U=\{(a,i)\in \mathcal I(M_1)\times I\mid Id_{[0,a]}(a)\le b_i\}=\{(a,i)\mid a\le b_i\}$. For each $(a,i)\in U$, let us put $f_{a,i}=f_i\circ Id_{[0,a]}=f_i: [0,a]\to [0,f_i(a)]$. Then for the $EMV$-morphism $f\circ Id_{M_1}=\{f_{a,i}\mid (a,i)\in U\}$, we have that for each $(a,i)$, that is for $a$, there is $b_i$ such that $a\le b_i$. Whence, $f_i(x)= f_{a,i}(x):=f_i(x)\wedge f_{a,i}(a)=f_i(x)$ for each $x \in [0,a]$, that is $f\circ Id_{M_1}  \approx f$.

Similarly, let $Id_{M_2} =\{Id_{[0,b]}\mid b \in \mathcal I(M_2)\}$. Then for $V=\{(i,b)\in I\times \mathcal I(M_2)\mid f_i(b_i)\le b\}$, we have $Id_{M_2}\circ f=\{\tilde f_{i,b}\mid (i,b)\in V\}$, where
$ f_i=\tilde f_{i,b}:= Id_{[0,b]}\circ f_i:[0,b_i]\to [0,f_i(b_i)]$. So that, $f_i(y)=\tilde f_{i,b}(y)=f_i(y)\wedge \tilde f_{i,b}(b_i)=f_i(y)$ for each $y \in [0,b_i]$, that is $Id_{M_2}\circ f \approx f$.
\end{proof}

\section{Categories of $EMV$-algebras}

The notion of an $EMV$-morphisms and the relation $\approx$ enable us to study three categories of $EMV$-algebras where the objects are $EMV$-algebras and morphisms are special classes of $EMV$-morphisms.

\begin{rmk}\label{rmk:category}
Let $\mathcal{EMV}$ be the set of $EMV$-algebras. Given two $EMV$-algebras $M_1,M_2$, on the set $\Hom(M_1,M_2)$ of $EMV$-morphisms from $M_1$ into $M_2$, we define $\Hom(M_1,M_2)/\approx:=\{[f]\mid f \in \Hom(M_1,M_2)\}$ of quotient classes, where $[f]=\{g \in \Hom(M_1,M_2)\mid g\approx f\}$. Since $\approx$ is an equivalence, we can define a composition of quotient classes denoted also by $\circ$ such that $[g]\circ [h]:= [g\circ h]$ for all $[h]\in \Hom(M_1,M_2)/\approx$ and $[g]\in \Hom(M_2,M_3)/\approx$. Then $[g]\circ [h]\in \Hom(M_1,M_3)/\approx$, and
by Proposition \ref{pr:assoc}, we have $([f]\circ [g])\circ [h]=[f]\circ ([g]\circ [h])$ for each $[f]\in \Hom(M_3,M_4)/\approx$.  In addition, $[h]\circ [Id_{M_1}]=[h]$ and $[Id_{M_2}]\circ [h]=[h]$ for each $[h]\in \Hom(M_1,M_2)/\approx$.

Therefore, we can define the category $\mathbb{EMV}$ whose objects are $EMV$-algebras and any morphism  from an object $M_1$ to another object $M_2$ is the quotient class $[f]\in \Hom(M_1,M_2)/\approx$. As we have seen, the category $\mathbb{EMV}$ is defined correctly.
\end{rmk}

\begin{defn}\label{standard}
	An $EMV$-morphism $f=\{f_i\mid i\in I\}:M_1\ra M_2$ with $\mathsf{e}(f)=\{a_i\mid i\in I\}$
	is called {\em standard} if, for each $x\in M_1$, the set $\{f_i(x)\mid x\leq a_i\}$ has a maximum element, that is, given $x \in M_1$, there is $a_j\in \mathcal I(M_1)$ with $j\in I$ and $a_j\ge x$ such that $f_{j}(x)=\max\{f_i(x)\mid x\leq a_i\}$.
\end{defn}

Clearly, if $M_1$ and $M_2$ are $MV$-algebras and $f:M_1\ra M_2$ is an $MV$-homomorphism, then $f$ is standard as an $EMV$-morphism.

\begin{prop}\label{cat-stan}
	Let $f=\{f_i\mid i\in I\}:M_1\ra M_2$ and $g=\{g_j\mid j\in J\}:M_2\ra M_3$ be standard $EMV$-morphisms such that
	$\mathsf{e}(f)=\{a_i\mid i\in I\}$ and $\mathsf{e}(g)=\{b_j\mid j\in J\}$. Then:
\begin{itemize}[nolistsep]	
\item[{\rm (i)}]  The $EMV$-morphism $g\circ f$ is standard.

\item[{\rm (ii)}] For each $EMV$-algebra $M$, the identity map $Id_M:M\ra M$ is a standard $EMV$-morphism.
	
\item[{\rm (iii)}] If $h=\{h_i\mid i\in K\}:M_1\ra M_2$ is an $EMV$-morphism such that $\mathsf{e}(h)=\{c_k\mid k\in K\}$ and
	$f\approx h$, then $h$ is a standard $EMV$-morphism.
\end{itemize}
\end{prop}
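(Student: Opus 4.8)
The plan is to treat the three parts separately: (ii) is essentially immediate, (iii) follows from the symmetry of $\approx$, and (i) — the substantive part — is handled by first producing an upper bound from the two maxima guaranteed by standardness of $f$ and $g$, and then showing that this bound is actually attained.

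For (ii), recall that $Id_M=\{Id_{[0,a]}\mid a\in\mI(M)\}$. For a fixed $x\in M$, the set $\{Id_{[0,a]}(x)\mid x\le a\}=\{x\}$ is a singleton, so its maximum is $x$ and $Id_M$ is standard by Definition \ref{standard}. For (iii), I would first invoke Proposition \ref{eq.prop} to obtain $h\approx f$ as well. Fix $x\in M_1$ and let $j\in I$ realize $f_j(x)=\max\{f_i(x)\mid x\le a_i\}$ coming from the standardness of $f$. Using $h\approx f$, every $k\in K$ with $x\le c_k$ admits $i\in I$ with $c_k\le a_i$ and $h_k(x)=f_i(x)\wedge h_k(c_k)\le f_i(x)\le f_j(x)$ (as $x\le c_k\le a_i$ makes $f_i(x)$ a term in the maximum), so $f_j(x)$ bounds $\{h_k(x)\mid x\le c_k\}$. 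Conversely, applying $f\approx h$ to the index $j$ yields $k_0\in K$ with $a_j\le c_{k_0}$ and $f_j(x)=h_{k_0}(x)\wedge f_j(a_j)\le h_{k_0}(x)$; since also $x\le c_{k_0}$, the bound gives $h_{k_0}(x)\le f_j(x)$, whence $h_{k_0}(x)=f_j(x)$. Thus the maximum is attained at $k_0$ and $h$ is standard.

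The heart of the proof is (i). Using the composition description of Proposition \ref{9.2}, I must show that for each $x\in M_1$ the set $\{(g_j\circ f_i)(x)\mid (i,j)\in U,\ x\le a_i\}$ has a maximum. First choose $i_0$ with $x\le a_{i_0}$ realizing $y_0:=f_{i_0}(x)=\max\{f_i(x)\mid x\le a_i\}$ (standardness of $f$), and then $j_0$ with $y_0\le b_{j_0}$ realizing $g_{j_0}(y_0)=\max\{g_j(y_0)\mid y_0\le b_j\}$ (standardness of $g$). To see that $g_{j_0}(y_0)$ bounds every term, for a given $(i,j)\in U$ I would use the directedness axiom (iv) of Definition \ref{9.1} for $g$ to find $s$ above $j$ and $j_0$, and then chain $g_j(f_i(x))\le g_s(f_i(x))\le g_s(y_0)\le g_{j_0}(y_0)$, where the first inequality is Proposition \ref{pr:properties}(1), the second is monotonicity of $g_s$ together with $f_i(x)\le y_0$, and the third is the maximality defining $g_{j_0}(y_0)$.

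The main obstacle, and the step needing care, is attaining this bound: the natural candidate pair $(i_0,j_0)$ need not lie in $U$, since $f_{i_0}(a_{i_0})$ may exceed $b_{j_0}$. To repair this I would use that $f_{i_0}(a_{i_0})$ is an idempotent of $M_2$ and that $\mathsf{e}(g)=\{b_j\}$ is full in $\mI(M_2)$ (Definition \ref{9.1}(i)) to pick $j_1$ with $f_{i_0}(a_{i_0})\le b_{j_1}$, and then apply directedness (iv) to $j_0,j_1$ to obtain $t$ with $b_{j_0},b_{j_1}\le b_t$ and $g_{j_0}(b_{j_0}),g_{j_1}(b_{j_1})\le g_t(b_t)$. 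Then $(i_0,t)\in U$ and $x\le a_{i_0}$, so $(g_t\circ f_{i_0})(x)=g_t(y_0)$ is a genuine term; Proposition \ref{pr:properties}(1) (giving $g_{j_0}(y_0)\le g_t(y_0)$) together with the maximality of $g_{j_0}(y_0)$ (giving $g_t(y_0)\le g_{j_0}(y_0)$) forces $g_t(y_0)=g_{j_0}(y_0)$. Hence the upper bound is attained and $g\circ f$ is standard.
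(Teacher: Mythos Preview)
Your proposal is correct and follows essentially the same strategy as the paper's proof: for (i) you pick the $f$-maximum, then the $g$-maximum of that value, repair the index pair so it lands in $U$, and verify the resulting value is both an upper bound and attained; for (iii) you pass back and forth through $f\approx h$ and $h\approx f$ exactly as the paper does. Your chain $g_j(f_i(x))\le g_s(f_i(x))\le g_s(y_0)\le g_{j_0}(y_0)$ for the upper bound in (i) is in fact tidier than the paper's version, which at one point writes $g_j(f_s(x))$ even though $f_s(x)$ need not lie in $[0,b_j]$.
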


\begin{proof}
(i) We know that
	$g\circ f=\{g_j\circ f_i\mid (i,j)\in U\}$, where $U=\{(i,j)\in I\times J\mid f_i(a_i)\leq b_j\}$.
	Let $x\in M_1$ and $i\in I$ be such that $f_i(a_i)\leq b_j$ and
	$x\leq a_i$. Then there exists $s\in I$ with $f_s(x)=\max\{f_i(x)\mid x\leq a_i\}$. Suppose that
	$g_t(f_s(x))=\max\{g_j(f_s(x))\mid f_s(x)\leq b_j\}$. Put $k\in J$ such that $b_k\geq b_t,f_s(a_s)$ and
	$g_t(b_t)\leq g_k(b_k)$. Then $g_k(f_s(x))\geq g_t(f_s(x))$ which implies that $g_k(f_s(x))=g_t(f_s(x))$. We claim that
	$g_k(f_s(x))=\max\{(g_j\circ f_i)(x)\mid (i,j)\in U,\ x\leq a_i\}$. Let $(i,j)\in U$ and $x\leq a_i$. Then $f_i(x)\leq f_s(x)$.
	Let $z\in J$ such that $b_z\geq b_j,b_k$ and $g_z(b_z)\geq g_j(b_j),g_k(b_k)$. By the assumption,
	$g_z(f_s(x))=g_k(f_s(x))$ and $g_j(f_s(x))=g_z(f_s(x))\wedge g_j(b_j)\leq g_z(f_s(x))=g_k(f_s(x))$. So, the claim is true, and $g\circ f$ is a standard $EMV$-morphism.

(ii) It is evident.

(iii) If $x\in M_1$, then there
	exists $t\in I$ such that $x\leq a_t$ and $f_t(x)=\max\{f_i(x)\mid x\leq a_i\}$. Since $f\approx h$, there exists $k\in K$ such that
	$a_t\leq c_k$, $f_t(a_t)\leq h_k(c_k)$ and $f_t(x)=h_k(x)\wedge f_t(a_t)$ for all $x\in [0,a_t]$. That is, $f_t(x)\leq h_k(x)$. Moreover,
	if $c_j$ is an arbitrary element of $\mathsf{e}(h)$ such that $x\leq c_j$, then there is $s\in K$ such that
	$c_j,c_k\leq c_s$ and $h_j(c_j),h_k(c_k)\leq h_s(c_s)$, so $h_j(x),h_k(x)\leq h_s(x)$. On the other hand,
	there exists $i\in I$ such that $c_s\leq a_i$ and $h_s(x)=f_i(x)\wedge h_s(c_s)$, which implies that
	$h_s(x)\leq f_i(x)\leq f_t(x)$ (since $x\leq a_i$). Thus, $h_j(x)\leq f_t(x)$ (since $h_j(x)\leq h_s(x)$).
	Summing up the above results, we get that $f_t(x)=h_k(x)$ and $f_t(x)=h_s(x)=\max\{h_i(x)\mid i\in K\}$ and so, $h$ is a standard $EMV$-morphism.   	
\end{proof}

\begin{lem}\label{lem-stan1}
	Let $f=\{f_i\mid i\in I\}:M_1\ra M_2$ be a standard $EMV$-morphism with $\mathsf{e}(f)=\{a_i\mid i\in I\}$.
	\begin{itemize}[nolistsep]
		\item[{\rm (i)}] For each $x\in M_1$,
		$\max\{f_i(x)\mid x\leq a_i\}=\max\{f_i(x)\mid a\leq a_i\}$, where $a\in\mI(M_1)$ and $a\geq x$.
		\item[{\rm (ii)}] The map $F_f:M_1\ra M_2$ defined by $F_f(x)=\max\{f_i(x)\mid x\leq a_i\}$, $x \in M_1$, is a strong $EMV$-homomorphism in the sense of
		Definition {\rm \ref{3.4}}.
	\end{itemize}
\end{lem}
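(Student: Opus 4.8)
The plan is to prove (i) first and then feed it into (ii). For (i), observe that if $a\in\mI(M_1)$ with $a\ge x$, then $\{i\in I\mid a\le a_i\}\s\{i\in I\mid x\le a_i\}$, because $a\le a_i$ forces $x\le a\le a_i$; hence $\max\{f_i(x)\mid a\le a_i\}\le\max\{f_i(x)\mid x\le a_i\}$. For the reverse inequality I would take $s\in I$ with $x\le a_s$ realizing $f_s(x)=\max\{f_i(x)\mid x\le a_i\}$. Using fullness of $\e(f)$ (condition (i) of Definition \ref{9.1}) choose $a_r\in\e(f)$ with $a\le a_r$, and by condition (iv) choose $t\in I$ with $a_s,a_r\le a_t$ and $f_s(a_s),f_r(a_r)\le f_t(a_t)$. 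Since $f_s(a_s)\le f_t(a_t)$ and $x\in[0,a_s]=[0,a_s\wedge a_t]$, Proposition \ref{pr:properties}(1) gives $f_s(x)\le f_t(x)$; as $x\le a_t$ and $s$ realizes the larger maximum we also have $f_t(x)\le f_s(x)$, so $f_t(x)=f_s(x)$. Finally $a\le a_r\le a_t$ places $t$ in $\{i\mid a\le a_i\}$, whence $\max\{f_i(x)\mid a\le a_i\}\ge f_t(x)=\max\{f_i(x)\mid x\le a_i\}$, and equality follows.

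The heart of (ii) is a single-index claim: for every $a\in\mI(M_1)$ there is one index $m\in I$ with $a\le a_m$ such that $F_f(z)=f_m(z)$ for all $z\in[0,a]$. To produce $m$, I would apply the standardness of $f$ (Definition \ref{standard}) to $a$ itself, obtaining $m$ with $a\le a_m$ and $f_m(a)=\max\{f_i(a)\mid a\le a_i\}=F_f(a)$. Now fix $x\in[0,a]$ and any $i$ with $a\le a_i$; by (i) it suffices to show $f_i(x)\le f_m(x)$. Choosing $t\in I$ with $a_i,a_m\le a_t$ and $f_i(a_i),f_m(a_m)\le f_t(a_t)$ (condition (iv)), condition (iii) yields $f_i(z)=f_t(z)\wedge f_i(a_i)$ on $[0,a_i]$ and $f_m(z)=f_t(z)\wedge f_m(a_m)$ on $[0,a_m]$. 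Since $x\le a$ gives $f_t(x)\le f_t(a)$, I can compute $f_i(x)=f_t(x)\wedge f_i(a_i)\le f_t(a)\wedge f_i(a_i)=f_i(a)\le f_m(a)\le f_m(a_m)$, and therefore $f_i(x)\le f_t(x)\wedge f_m(a_m)=f_m(x)$. Thus $f_m(x)$ dominates every $f_i(x)$ with $a\le a_i$, i.e. $f_m(x)=F_f(x)$, proving the claim. I expect this meet manipulation to be the main obstacle, since it is the one place where conditions (iii), (iv) and the defining choice of $m$ must be combined correctly.

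With the claim available, (ii) becomes routine. Given $x,y\in M_1$, use ($EMV4$) to fix an idempotent $a\ge x,y$; then $x\vee y,\,x\wedge y,\,x\oplus y\in[0,a]$, and the single $m$ attached to $a$ satisfies $F_f=f_m$ on $[0,a]$. As $f_m$ is an $MV$-homomorphism it preserves $0,\oplus,\vee,\wedge$ on $[0,a]$, so $F_f$ preserves these operations and $F_f(0)=0$. For the negation, fix $b\in\mI(M_1)$ and $x\in[0,b]$, take $a=b$ with the corresponding $m$ (so $b\le a_m$); applying Proposition \ref{3.5}(i) once in $M_1$ and once in $M_2$, together with the fact that $f_m$ preserves $\wedge$ and $\lam_{a_m}$, I obtain
\[
F_f(\lam_b(x))=f_m(\lam_{a_m}(x)\wedge b)=\lam_{f_m(a_m)}(f_m(x))\wedge f_m(b)=\lam_{f_m(b)}(f_m(x))=\lam_{F_f(b)}(F_f(x)).
\]

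Hence $F_f$ is an $EMV$-homomorphism. It remains to check strongness, i.e. that $\{F_f(a)\mid a\in\mI(M_1)\}$ is a full subset of $M_2$: given $c\in\mI(M_2)$, condition (ii) of Definition \ref{9.1} provides $i\in I$ with $c\le f_i(a_i)$, and since $a_i\le a_i$ the definition of $F_f$ gives $F_f(a_i)\ge f_i(a_i)\ge c$. This establishes that $F_f$ is a strong $EMV$-homomorphism in the sense of Definition \ref{3.4}.
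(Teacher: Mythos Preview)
Your argument is correct in both parts. Part (i) is essentially the paper's proof. For part (ii), however, you take a genuinely different route. The paper verifies the operations $\oplus,\vee,\wedge$ one at a time: for instance, to show $F_f(x\oplus y)=F_f(x)\oplus F_f(y)$ it picks separate indices realising $F_f(x\oplus y)$, $F_f(x)$, $F_f(y)$, finds a common dominating index, and compares; the treatment of $\lambda_a$ is again a separate two-step computation. Your approach instead isolates a single structural claim --- for each idempotent $a$ there is one index $m$ with $a\le a_m$ such that $F_f\upharpoonright[0,a]=f_m\upharpoonright[0,a]$ --- and then all the algebraic preservation facts follow in one stroke from $f_m$ being an $MV$-homomorphism. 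This is more economical and makes the reason why $F_f$ is an $EMV$-homomorphism transparent: locally it \emph{is} one of the $f_m$. The cost is the meet manipulation $f_i(x)\le f_t(x)\wedge f_m(a_m)=f_m(x)$, which you carry out correctly; the paper avoids needing this particular step but at the price of more repetitive case-work.
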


\begin{proof}
	(i) Clearly, $\max\{f_i(x)\mid a\leq a_i\}\leq \max\{f_i(x)\mid x\leq a_i\}$. Let $a\leq a_j$ and $f_k(x)$ be the greatest element of
	$\{f_i(x)\mid x\leq a_i\}$. Put $t\in I$ such that $a_j,a_k\leq a_t$ and $f_j(a_j),f_k(a_k)\leq f_t(a_t)$.
	Then $f_t(x)=f_k(x)$ and $f_t(x)\in \{f_i(x)\mid a\leq a_i\}$.
	It follows that $\max\{f_i(x)\mid x\leq a_i\}=\max\{f_i(x)\mid a_j\leq a_i\}$.
	
	(ii) Clearly, $F_f(0)=0$.
Let $x,y\in M_1$. If $F_f(x\oplus y)=f_k(x\oplus y)$ for $a_k\geq x\oplus y$,	 then $F_f(x\oplus y)=f_k(x)\oplus f_k(y)\leq F_f(x)\oplus F_f(y)$. On the other hand, if $F_f(x)=f_i(x)$ and $F_f(y)=f_j(y)$, then there is $s\in I$ such that $a_s\geq a_i,a_j$ and $f_s(a_s)\geq f_i(a_i),f_j(a_j)$, so that $f_s(x)=f_i(x)$ and $f_s(y)=f_j(y)$. Thus $F_f(x)\oplus F_f(y)=f_s(x)\oplus f_s(y)=f_s(x\oplus y)\leq F_f(x\oplus y)$
	(since $x\oplus y\leq a_s\oplus a_s=a_s$). That is, $F_f(x\oplus y)=F_f(x)\oplus F_f(y)$. In a similar way, we can show that
	$F_f(x\vee y)=F_f(x)\vee F_f(y)$ and $F_f(x\wedge y)=F_f(x)\wedge F_f(y)$. It suffices to show that for each $a\in\mI(M_1)$,
	$F_f|_{_{[0,a]}}:[0,a]\ra [0,F_f(a)]$ is an $MV$-homomorphism. Put $a\in\mI(M_1)$ and $x\in [0,a]$. Let $F_f(a)=f_j(a)$ and
	$F_f(x)=f_i(x)$ for $x\leq a_i$ and $a\leq a_j$.

	(1) If $k\in I$ such that $a_j,a_i\leq a_k$ and $f_i(a_i),f_j(a_j)\leq f_k(a_k)$, then
	$F_f(x)=f_k(x)$ and $F_f(a)=f_k(a)$ and so by Proposition \ref{3.5}, we have
	\begin{eqnarray*}
	 \lam_{F_f(a)}(F_f(x))&=&\lam_{f_k(a)}(f_k(x))=\lam_{f_k(a_k)}(f_k(x))\wedge f_k(a)=f_k(\lam_{a_k}(x))\wedge f_k(a)\\
	&=& f_k(\lam_{a_k}(x)\wedge a)=f_k(\lam_{a}(x)).
	\end{eqnarray*}
	
(2) Let $F_f(\lam_a(x))=f_t(\lam_a(x))$ for some $t\in I$ with $\lam_a(x)\leq a_t$.
	Put $z\in I$ such that $a_t,a_k\leq a_z$ and $f_k(a_k),f_t(a_t)\leq f_z(a_z)$. Then
	$F_f(\lam_a(x))=f_z(\lam_a(x))$ and $\lam_{F_f(a)}(F_f(x))=f_z(\lam_{a}(x))$ (by (1)). So,
	$\lam_{F_f(a)}(F_f(x))=F_f(\lam_a(x))$. Hence, $F_f$ is an $EMV$-homomorphism in the sense of Definition \ref{3.4}. Since $\{f_i(a_i)\mid i\in I\}$ is a full subset of $M_2$, then
	clearly, $\{F_f(a_i)\mid i\in I\}$ is a full subset of $M_2$, which implies that $F_f$ is a strong $EMV$-morphism.
\end{proof}

\begin{rmk}\label{cat0}
Given two $EMV$-algebras $M_1$ and $M_2$, let $\Hom_s(M_1,M_2)$ be the set of all standard $EMV$-morphisms from $M_1$ into $M_2$. We denote by $\Hom_s(M_1,M_2)/\approx$ the set of quotient classes $[f]_s$, where $f\in \Hom_s(M_1,M_2)$, under the equivalence $\approx$ restricted to $\Hom_s(M_1,M_2)$, that is,
$$\Hom_s(M_1,M_2)/\approx:=\{[f]_s\mid f \in \Hom_s(M_1,M_2)\},
$$
where $[f]_s:=\{g \in \Hom_s(M_1,M_2)\mid g\approx f\}$.
Due to Proposition \ref{cat-stan}, we can define unambiguously a composition, denoted also as $\circ$, of two classes via $[f]_s\circ [g]_s:=[f\circ g]_s$ for $[f]_s\in \Hom_s(M_1,M_2)/\approx$ and $[g]_s \in \Hom_s(M,M_1)/\approx$. Then $\circ$ is associative and due to Proposition \ref{cat-stan}(ii), we have $[h]_s\circ [Id_{M_1}]_s=[h]_s$ and $[Id_{M_2}]_s\circ [h]_s=[h]_s$ for each $[h]_s\in \Hom_s(M_1,M_2)/\approx$.

Therefore, similarly as in Corollary \ref{rmk:category}, we can define a new category $\mathbb{EMV}_s$ whose objects are $EMV$-algebras and a morphism from an object $M_1$ into another object $M_2$ is $[f]_s\in \Hom_s(M_1,M_2)/\approx$. Now, it is easy to verify that $\mathbb{EMV}_s$ is indeed a category.
\end{rmk}

\begin{rmk}\label{rm:strong}
Let $f:M_1\to M_2$ and $g:M_2\to M_3$ be strong $EMV$-homomorphisms. Then the composition $g\circ f:M_1\to M_3$ defined by $(g\circ f)(x)=g(f(x))$, $x \in M_1$, is a strong $EMV$-homomorphism from $M_1$ into $M_3$. Therefore, we can define a third category $\mathbb{EMV}_{sh}$ whose objects are $EMV$-algebras and a morphism from an object $M_1$ into another object $M_2$ is any strong $EMV$-homomorphism from $M_1$ into $M_2$. Since every $Id_M$ is also a strong $EMV$-homomorphism, we see that $\mathbb{EMV}_{sh}$ is indeed a category.
\end{rmk}

\begin{thm}\label{cat}
The categories $\mathbb{EMV}_{sh}$ and $\mathbb{EMV}_s$ are equivalent.
\end{thm}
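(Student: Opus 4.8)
The plan is to exhibit a functor $\Phi:\mathbb{EMV}_s\to\mathbb{EMV}_{sh}$ and a functor $\Psi:\mathbb{EMV}_{sh}\to\mathbb{EMV}_s$ that are mutually quasi-inverse, using Lemma \ref{lem-stan1} and Remark \ref{9.1.1} as the two halves of the dictionary between standard $EMV$-morphisms and strong $EMV$-homomorphisms. Both categories have the same objects, namely all $EMV$-algebras, so both functors act as the identity on objects; the content is entirely in the morphisms. For the functor $\Phi$, I would send a morphism $[f]_s\in\Hom_s(M_1,M_2)/\!\approx$ to the strong $EMV$-homomorphism $F_f:M_1\to M_2$ from Lemma \ref{lem-stan1}(ii), defined by $F_f(x)=\max\{f_i(x)\mid x\le a_i\}$. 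For $\Psi$, I would send a strong $EMV$-homomorphism $g:M_1\to M_2$ to the class $[g]_s$, where $g$ is regarded as the $EMV$-morphism $\{g|_{[0,a]}\mid a\in\mI(M_1)\}$ as in Remark \ref{9.1.1}; this is standard because $\{g|_{[0,a]}(x)\mid x\le a\}$ attains its maximum (take any idempotent $a\ge x$, and all restrictions agree on $[0,a_1\wedge a_2]$ by Remark \ref{9.1.1}, so the value is independent of the choice).

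The first key step is to check that $\Phi$ is well defined on $\approx$-classes: if $f\approx h$ are standard, then $F_f=F_h$. This should follow directly from Proposition \ref{cat-stan}(iii) together with the final computation in its proof, where it is shown that for each $x$ the maxima $\max\{f_i(x)\}$ and $\max\{h_i(x)\}$ coincide. The second step is functoriality: I must verify $\Phi([g]_s\circ[f]_s)=\Phi([g]_s)\circ\Phi([f]_s)$, i.e. $F_{g\circ f}=F_g\circ F_f$, and that $\Phi$ sends $[Id_M]_s$ to $Id_M$. The composite-preservation identity is the computational heart: unwinding the definition, $F_{g\circ f}(x)=\max\{g_k(f_i(x))\mid x\le a_i,\ f_i(a_i)\le b_k\}$, and I would compare this with $F_g(F_f(x))=\max\{g_k(F_f(x))\mid F_f(x)\le b_k\}$ by choosing, for a given $x$, the index $i$ realizing $F_f(x)=f_i(x)$ and then the index $k$ realizing the maximum of $g_k$ at $f_i(x)$, using fullness and condition (iv) of Definition \ref{9.1} to align indices and the $\wedge$-compatibility (iii) to control the non-maximal terms. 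For $\Psi$ the analogous checks are easier because composition of strong $EMV$-homomorphisms is ordinary map composition (Remark \ref{rm:strong}), and the passage $g\mapsto[g]_s$ visibly respects composition and identities via Remark \ref{9.1.1}.

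The final step is to establish the two natural isomorphisms $\Psi\Phi\cong Id$ and $\Phi\Psi\cong Id$; since both functors are the identity on objects, it suffices to show they are mutually inverse on morphisms, so the natural transformations can be taken to be identities and no naturality square is nontrivial. In one direction, starting from a standard $f$, forming $F_f$, and then restricting to the family $\{F_f|_{[0,a]}\mid a\in\mI(M_1)\}$, I must show this family is $\approx$-equivalent to $f$; this is where I would invoke Lemma \ref{lem-stan1}(i), which guarantees that $F_f(x)=\max\{f_i(x)\mid a\le a_i\}$ for any idempotent $a\ge x$, so that on each $[0,a]$ the map $F_f$ agrees with a single $f_j$ up to meeting with $F_f(a)$, yielding $F_f|_{[0,a]}\approx f$ in the sense of Definition \ref{hhhh}. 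In the other direction, starting from a strong $EMV$-homomorphism $g$, passing to the standard morphism $\{g|_{[0,a]}\}$, and then forming its associated strong homomorphism $F$, I must recover $F=g$; this is immediate since $F(x)=\max\{g(x)\wedge a\mid x\le a\}=g(x)$ for any idempotent $a\ge x$. The main obstacle I anticipate is the careful index-bookkeeping in proving $F_{g\circ f}=F_g\circ F_f$: one must verify both inequalities between the two maxima, and in particular argue that the maximizing index pair $(i,k)$ for the composite can be matched against the separately-chosen maximizers for $F_f$ and $F_g$, which requires repeated use of the directedness condition (iv) and the meet-formula (iii) exactly as in the proof of Proposition \ref{cat-stan}(i).
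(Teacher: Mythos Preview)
Your proposal is correct and follows essentially the same approach as the paper: both construct the same pair of functors (identity on objects; $[f]_s\mapsto F_f$ in one direction via Lemma~\ref{lem-stan1}, and $g\mapsto[\{g|_{[0,a]}\}]_s$ in the other via Remark~\ref{9.1.1}) and verify they are mutually inverse on morphisms. You are somewhat more explicit than the paper about the functoriality step $F_{g\circ f}=F_g\circ F_f$; the paper dispatches this by reference to Proposition~\ref{cat-stan}, whose proof of part~(i) already contains exactly the index-alignment computation you outline.
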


\begin{proof}
First we note that if $f=\{f_i\mid i\in I\}:M_1\ra M_2$ and $g=\{g_j\mid j\in J\}:M_1\ra M_2$ are two similar standard $EMV$-morphisms, then
	$F_f = F_g$, where the strong $EMV$-homomorphisms $F_f$ and $F_g$ are defined by Lemma \ref{lem-stan1}. Indeed, let $x\in M_1$. Then there exists $i\in I$ and $j\in J$ such that
	$F_f(x)=f_i(x)$ and $F_g(x)=g_j(x)$. Since $f\approx g$, then there is $k\in J$ such that $a_i\leq b_k$ and
	$f_i(x)=g_k(x)\wedge f_i(a_i)$. Also, by Proposition \ref{eq.prop}, there exists $t\in I$ such that $b_j\leq a_t$ and
	$g_j(x)=f_t(x)\wedge g_j(b_j)$, so
	$f_i(x)\leq g_k(x)\leq g_j(x)\leq f_t(x)\leq f_i(x)$ (since $f_i(x)=\max\{f_s(x)\mid x\leq a_s\}$). That is, $F_f(x)= F_g(x)$ for each $x \in M$.

Consider a mapping $\mathcal{F}:\mathbb{EMV}_s\ra \mathbb{EMV}_{sh}$ defined by	$\mathcal{F}(M)=M$ for all $EMV$-algebras $M$, and $\mathcal{F}([f]_s):M_1\ra M_2$, $[f]_s\in \Hom(M_1,M_2)_s/\approx$, sending $x$ to $F_f(x)=\max\{f_i(x)\mid x\leq a_i\}$, that is, $\mathcal F([f]_s)=F_f$. By the previous paragraph, $\mathcal F([f]_s)$ is correctly defined and in view of	Proposition \ref{cat-stan}, $\mathcal{F}$ preserves both $\circ$ and the identity maps. Therefore, $\mathcal F$ is a well-defined functor.

Let $\mathcal{H}:\mathbb{EMV}_{sh}\ra \mathbb{EMV}_s$
be defined by $\mathcal{H}(M)=M$ for each $EMV$-algebra $M$ and $\mathcal{H}(f):M_1\ra M_2$ be the class of standard $EMV$-morphisms containing the $EMV$-morphism induced by the strong $EMV$-homomorphism $f$ (see Remark \ref{9.1.1}), that is $\mathcal H(f)=[f]_s$. Then $\mathcal H$ is also a functor.

Let $f=\{f_i\mid i\in I\}:M_1\ra M_2$ be a standard $EMV$-morphism with $\mathsf{e}(f)=\{a_i\mid i\in I\}$.
	We claim that $\mathcal{H}\circ\mathcal{F}([f]_s)=[f]_s$. Set $F_f=\mathcal{F}([f]_s)$. It suffices to show that $\{F_{f,a}\mid a\in \mI(M_1)\}$ is similar to
	$f$, where $F_{f,a}=F_f|_{_{[0,a]}}$ for all $a\in\mI(M_1)$. Put $a\in\mI(M_1)$. If $x$ is an arbitrary element of $[0,a]$, then
	there exists $i\in I$ such that $x\leq a_i$ and $F_{f,a}(x)=f_i(x)=\max\{f_s(x)\mid x\le a_s\}$. It follows that
	$F_{f,a}(x)=F_f(x)=f_i(x)=f_i(x)\wedge F_{f,a}(a)$ (note that $F_{f,a}(a)\geq F_f(x)=F_{f,a}(x)$). Thus $f\approx \{F_{f,a}\mid a\in \mI(M_1)\}$.

	On the other hand, if $g:M_1\ra M_2$ is a strong $EMV$-homomorphism, then $\mathcal{H}(g)=[g]_s=[\{g_a\mid a\in\mI(M_1)\}]_s$ (see Remark \ref{9.1.1}) and so
	$\mathcal{F}\circ\mathcal{H}(g)(x)=\max\{g_a(x)\mid a\in\mI(M_1),\ x\leq a\}=\max\{g(x)\mid x\leq a\}=g(x)$ for all $x\in M_1$.
	Therefore, $\mathbb{EMV}_s$ and $\mathbb{EMV}_{sh}$ are equivalent categories.
\end{proof}

The following proposition will be used to show that $\mathbb{EMV}_s$ is a subcategory of the category $\mathbb{EMV}$.

\begin{prop}\label{pr:[f]}
Let $f=\{f_i\mid i\in I\}\in \Hom(M_1,M_2)$ be a standard $EMV$-morphism and $g=\{g_j\mid j\in
J\}\in [f]$, where $\textsf{e}(f)=\{a_i\mid i\in I\}$ and
$\textsf{e}(g)=\{b_j\mid j\in J\}$. Then $g$ is a standard $EMV$-morphism.
\end{prop}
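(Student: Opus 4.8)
The plan is to recognize that this proposition is, up to notation, the assertion already contained in Proposition \ref{cat-stan}(iii). By definition of the quotient class, $g\in[f]$ means exactly that $g\approx f$, and since $\approx$ is an equivalence relation by Proposition \ref{eq.prop}, we also have $f\approx g$. As $f$ is standard, applying Proposition \ref{cat-stan}(iii) with the standard morphism $f$ and with $g$ in the role of the morphism similar to it yields immediately that $g$ is standard. So a single line citing \ref{cat-stan}(iii) would suffice.

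To keep the argument self-contained, I would instead reproduce the sandwich argument underlying \ref{cat-stan}(iii). Fix $x\in M_1$. Since $f$ is standard, choose $t\in I$ with $x\le a_t$ and $f_t(x)=\max\{f_i(x)\mid x\le a_i\}$. From $f\approx g$ (Definition \ref{hhhh}) there is $j\in J$ with $a_t\le b_j$, $f_t(a_t)\le g_j(b_j)$, and $f_t(x)=g_j(x)\wedge f_t(a_t)$; in particular $x\le b_j$ and $f_t(x)\le g_j(x)$. The claim is that $g_j(x)=\max\{g_k(x)\mid x\le b_k\}$. To verify it, take any $k\in J$ with $x\le b_k$. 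Using condition (iv) of Definition \ref{9.1} for $g$, pick $s\in J$ with $b_j,b_k\le b_s$ and $g_j(b_j),g_k(b_k)\le g_s(b_s)$; then Proposition \ref{pr:properties}(1) gives $g_j(x),g_k(x)\le g_s(x)$. Now invoke the other direction $g\approx f$ to find $i\in I$ with $b_s\le a_i$ and $g_s(x)=f_i(x)\wedge g_s(b_s)$, so $g_s(x)\le f_i(x)$; since $x\le b_s\le a_i$, maximality of $f_t(x)$ forces $f_i(x)\le f_t(x)$. Chaining the inequalities, $g_k(x)\le g_s(x)\le f_i(x)\le f_t(x)\le g_j(x)$, and because $g_j(x)$ itself belongs to $\{g_k(x)\mid x\le b_k\}$, it is the maximum. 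Hence $g$ is standard.

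The main obstacle is keeping track of the direction of similarity: the proof genuinely uses $f\approx g$ (to produce the candidate maximum $g_j(x)$ and bound it below by $f_t(x)$) and, separately, $g\approx f$ (to bound an arbitrary $g_s(x)$ above by some $f_i(x)$, which the standardness of $f$ then caps by $f_t(x)$). The delicate bookkeeping is choosing the common upper bounds through Definition \ref{9.1}(iv) and matching them against the correct similarity clause and the correct monotonicity statement from Proposition \ref{pr:properties}(1); once the indices $t,j,s,i$ are correctly placed, the lattice inequalities collapse routinely into the required chain.
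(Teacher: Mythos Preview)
Your proposal is correct and follows the same sandwich argument as the paper's own proof: bound any $g_k(x)$ above by some $f_i(x)$, cap that by the maximum $f_t(x)$, and then exhibit an index in $J$ at which $g$ attains this value. Your observation that the statement is just Proposition~\ref{cat-stan}(iii) is also on the mark; the paper's proof here is essentially a slight repackaging of that earlier argument, and your self-contained version differs only in inserting the auxiliary common upper bound $b_s$ before passing back to $f$, whereas the paper bounds each $g_j(x)$ directly.
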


\begin{proof}
Given $x\in M_1$, let
$f_k(x)=\max\{f_i(x)\mid x\leq a_i\}$, where $x\leq a_k$. Let $b_j$ be an
arbitrary element of $\textsf{e}(g)$ such that
$x\leq b_j$. Since $f\approx g$, then there exists $i\in I$ such that
$b_j\leq a_i$ and
$g_j(x)=f_i(x)\wedge g_j(b_j)$. From $f_i(x)\leq f_k(x)$ it follows that
\[ \mbox{ if $j\in J$ such that $x\leq b_j$, then } g_j(x)\leq f_k(x).\]
On the other hand, there is $t\in J$ such that $a_k\leq b_t$ and
$f_k(x)=g_t(x)\wedge f_k(a_k)$, which implies that $f_k(x)\leq g_t(x)\leq
f_k(x)$. That is,
$g_t(x)=\max\{g_j(x)\mid x\leq b_j\}$. Therefore, $g$ is standard.
\end{proof}

\begin{cor}\label{co:subcateg}
The category $\mathbb{EMV}_s$ is a subcategory of the category $\mathbb{EMV}$.
\end{cor}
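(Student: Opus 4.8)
The plan is to exhibit the obvious inclusion functor $\mathcal{I}\colon\mathbb{EMV}_s\to\mathbb{EMV}$ which acts as the identity on objects, $\mathcal{I}(M)=M$, and which sends the morphism class $[f]_s$ of a standard $EMV$-morphism $f$ to the class $[f]$ of the very same morphism computed in the larger category. Since both categories have exactly the $EMV$-algebras as objects, the action on objects is trivially the identity, so the real work is to verify the three defining properties of a subcategory: that the hom-sets of $\mathbb{EMV}_s$ genuinely sit inside those of $\mathbb{EMV}$, and that $\circ$ and the identities are preserved.

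The central step, which I would carry out first, is to show that for a standard $EMV$-morphism $f\colon M_1\ra M_2$ its similarity class does not depend on whether $\approx$ is restricted to $\Hom_s(M_1,M_2)$ or taken on all of $\Hom(M_1,M_2)$; that is, $[f]_s=[f]$ as sets. The inclusion $[f]_s\s[f]$ is immediate because $\approx$ is literally the same relation, merely restricted. For the reverse inclusion I would take any $g\in\Hom(M_1,M_2)$ with $g\approx f$ and invoke Proposition \ref{pr:[f]}, which guarantees that $g$ is again standard; hence $g\in\Hom_s(M_1,M_2)$ and so $g\in[f]_s$. This equality $[f]=[f]_s$ is exactly what upgrades the map $[f]_s\mapsto[f]$ from a mere injection to an honest identification of $\Hom_s(M_1,M_2)/\approx$ with a subset of $\Hom(M_1,M_2)/\approx$.

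Finally I would check compatibility with the categorical structure. Composition in both categories is defined on representatives by $[u]\cc[v]:=[u\cc v]$, and if $f,g$ are standard and composable then Proposition \ref{cat-stan}(i) makes $g\cc f$ standard, so by the previous step $[g\cc f]_s=[g\cc f]$; this shows $\mathcal{I}$ carries $[g]_s\cc[f]_s$ to $[g]\cc[f]$, so composition agrees. For identities, Proposition \ref{cat-stan}(ii) says each $Id_M$ is a standard $EMV$-morphism, whence the identity $[Id_M]_s$ of $\mathbb{EMV}_s$ is sent to the identity $[Id_M]$ of $\mathbb{EMV}$, and Proposition \ref{pr:assoc} ensures these are the identities in each category. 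The whole argument hinges on Proposition \ref{pr:[f]}: the single non-formal ingredient is the closure property that similarity cannot carry a standard morphism out of the standard class, for otherwise the hom-sets would only inject rather than literally include. Granting that proposition, everything else is a formal bookkeeping of the definitions, so I do not expect any serious obstacle beyond invoking \ref{pr:[f]} and \ref{cat-stan} correctly.
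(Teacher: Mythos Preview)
Your proposal is correct and follows essentially the same approach as the paper: the key point is that Proposition \ref{pr:[f]} gives $[f]_s=[f]$ for any standard $f$, and the paper records exactly this one line as its proof. You have simply spelled out the remaining categorical bookkeeping (compatibility of $\circ$ and identities via Proposition \ref{cat-stan}) that the paper leaves implicit.
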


\begin{proof}
Let $f:M_1\to M_2$ be a standard $EMV$-morphism. Due to Proposition \ref{pr:[f]},
we have $[f]=[f]_s$ which gives the result.
\end{proof}

\section{Product of $EMV$-algebras in the Category $\mathbb{EMV}$}

We show that the product of a family of $EMV$-algebras with respect to $EMV$-morphisms can be defined also in the category $\mathbb{EMV}$ of $EMV$-algebras.

\begin{prop}\label{9.3}
	Let $M_1$ and $M_2$ be $EMV$-algebras and $f=\{f_i\mid i\in I\}:M_1\ra M_2$ be
	an $EMV$-morphism with $\mathsf{e}(f)=\{a_i\mid i\in I\}$. Consider a full subset $K\s \mI(M_1)$.
	For each $b\in K$, let us define $g_{i,b}:[0,b]\ra [0,f_i(b)]$ by $g_{i,b}:=f_{i}|_{_{[0,b]}}$, where $i\in I$ and $b\leq a_i$.
	Then $g=\{g_{i,b}\mid i\in I,\ b\leq a_i\}$ is an $EMV$-morphism and $f\approx g$.
\end{prop}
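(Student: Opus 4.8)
The plan is to verify the four axioms of Definition \ref{9.1} for $g$, together with the defining condition of $\approx$ from Definition \ref{hhhh}, exploiting throughout axiom (iv) of $f$ to pass to a common dominating index. First I would check that $g$ is well formed. Since $b\in K$ is idempotent and $f_i$ preserves $\oplus$, the element $f_i(b)$ is idempotent in $M_2$, so $[0,f_i(b)]$ is an $MV$-algebra. Using Proposition \ref{3.5}, the negation of $[0,b]$ is $\lambda_b(x)=\lambda_{a_i}(x)\wedge b$ and that of $[0,f_i(b)]$ is $\lambda_{f_i(b)}(y)=\lambda_{f_i(a_i)}(y)\wedge f_i(b)$; since $f_i$ is an $MV$-homomorphism preserving $\oplus,\wedge,0$ and $\lambda_{a_i}$, a direct computation gives $g_{i,b}(\lambda_b(x))=\lambda_{f_i(b)}(g_{i,b}(x))$, so each $g_{i,b}\colon[0,b]\to[0,f_i(b)]$ is an $MV$-homomorphism. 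Condition (i) is immediate: every $b\in K$ admits some $a_i\ge b$ because $\mathsf{e}(f)$ is full, so $\mathsf{e}(g)=K$, which is full by hypothesis.

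For condition (ii) and the remaining axioms, the recurring device is: given finitely many indices I first dominate their domains $a_i$ by a single $a_t$ (using that $\mathsf{e}(f)$ and $K$ are full and chaining), and then I upgrade $t$ via axiom (iv) of $f$ so that $f_t(a_t)$ also dominates the relevant images; Proposition \ref{pr:properties}(1) and axiom (iii) of $f$ then yield $f_i(x)=f_t(x)\wedge f_i(a_i)$ on $[0,a_i]$ and in particular $f_i(x)\le f_t(x)$. Concretely, for condition (ii) I take $c\in\mI(M_2)$, pick $i_0$ with $c\le f_{i_0}(a_{i_0})$, a $b\in K$ with $b\ge a_{i_0}$, some $j_1$ with $a_{j_1}\ge b$, and then by (iv) an index $t$ with $a_{i_0},a_{j_1}\le a_t$ and $f_{i_0}(a_{i_0})\le f_t(a_t)$. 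Then $(t,b)$ is an index of $g$ with $c\le f_{i_0}(a_{i_0})\le f_t(a_{i_0})\le f_t(b)=g_{t,b}(b)$, so $\{g_{i,b}(b)\}$ is full. Condition (iv) for $g$ follows by the same enlargement: from two indices $(i,b),(i',b')$ I build, via (iv) an index $t_0$ dominating $a_i,a_{i'}$, a $c\in K$ with $c\ge a_{t_0}$, and finally an $s$ with $c\le a_s$ and $f_s(a_s)\ge f_{t_0}(a_{t_0})$, whence the chain $f_i(b)\le f_{t_0}(b)\le f_s(b)\le f_s(c)$ (and symmetrically for $i'$) gives the required domination by $(s,c)$.

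The step I expect to be the main obstacle is condition (iii) for $g$, because its hypothesis is $f_i(b)\le f_{i'}(b')$ at the truncated idempotents rather than $f_i(a_i)\le f_{i'}(a_{i'})$ at the domain tops, so axiom (iii) of $f$ cannot be applied directly. My plan is to fix $x\in[0,b\wedge b']\subseteq[0,a_i\wedge a_{i'}]$, use (iv) to get $t$ with $a_i,a_{i'}\le a_t$ and $f_i(a_i),f_{i'}(a_{i'})\le f_t(a_t)$, and rewrite both sides through $f_t$: axiom (iii) of $f$ gives $f_i(x)=f_t(x)\wedge f_i(a_i)$, $f_{i'}(x)=f_t(x)\wedge f_{i'}(a_{i'})$ and $f_i(b)=f_t(b)\wedge f_i(a_i)$. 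Then $f_{i'}(x)\wedge f_i(b)=f_t(x)\wedge f_i(a_i)\wedge f_{i'}(a_{i'})$ (using $x\le b$), and the inequality $f_i(x)\le f_i(b)\le f_{i'}(b')\le f_{i'}(a_{i'})$ lets me absorb the factor $f_{i'}(a_{i'})$, yielding $f_{i'}(x)\wedge f_i(b)=f_t(x)\wedge f_i(a_i)=f_i(x)$, which is exactly $g_{i,b}(x)=g_{i',b'}(x)\wedge g_{i,b}(b)$.

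Finally, for $f\approx g$ I would handle a single $a_i$: choose $b\in K$ with $b\ge a_i$, then $j_1$ with $a_{j_1}\ge b$, and by (iv) an index $t$ with $a_i,a_{j_1}\le a_t$ and $f_i(a_i)\le f_t(a_t)$. Since $a_i\le a_t$ and $f_i(a_i)\le f_t(a_t)$, axiom (iii) of $f$ gives $f_i(x)=f_t(x)\wedge f_i(a_i)=g_{t,b}(x)\wedge f_i(a_i)$ for all $x\in[0,a_i]$, with $a_i\le b$, which is precisely the defining condition of Definition \ref{hhhh}. Hence $g$ is an $EMV$-morphism similar to $f$.
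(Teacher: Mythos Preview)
Your proof is correct and follows essentially the same strategy as the paper's: verify axioms (i)--(iv) of Definition~\ref{9.1} by repeatedly invoking axiom (iv) of $f$ to pass to a common dominating index $t$, then use axiom (iii) of $f$ to rewrite everything through $f_t$. Your treatment of condition (iii) for $g$ is in fact slightly cleaner than the paper's: you go directly from $f_{i'}(x)\wedge f_i(b)$ to $f_i(x)$ via the absorption $f_i(x)\le f_i(b)\le f_{i'}(b')\le f_{i'}(a_{i'})$, whereas the paper takes a longer chain of equalities through $f_j(x)\wedge f_i(a_i)$. For the similarity $f\approx g$, you verify the definition in the direction $f\to g$ (for each $a_i$ find a suitable $(t,b)$), while the paper verifies the reverse direction $g\to f$ (for each $(i,b)$ the index $i$ itself works trivially) and implicitly uses the symmetry established in Proposition~\ref{eq.prop}; both are fine.
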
 	

\begin{proof}
First we show that $g:M_1\ra M_2$ is an $EMV$-morphism.

	(1) Let $i\in I$. Then there is $b\in K$ such that $a_i\leq b$.
	Since $\mathsf{e}(f)$ is a full subset of $M_1$, we can find $j\in I$
	such that $b\leq a_j$. Put $t\in I$ satisfying
	$a_i,a_j\leq a_t$ and $f_i(a_i),f_j(a_j)\leq f_t(a_t)$. Then we have
	\[\big(\forall x\in [0,a_i] \  f_i(x)=f_t(x)\wedge f_i(a_i)\big) \quad \& \quad \big(\forall x\in [0,a_j] \ f_j(x)=f_t(x)\wedge f_j(a_j)\big).\]
	It follows that $f_t(b)\geq f_t(a_i)\geq f_i(a_i)$ and $f_t(b)\in \mathsf{e}(g)$.
	So, $\{g_{i,b}(b)\mid i\in I,\ b\in K,\ b\leq a_i\}$ is a full subset of
	$M_2$. Clearly, $K\s \mathsf{e}(g)$. Thus $\mathsf{e}(g)$ is a full subset of $M_1$.

	(2) Let $i,j\in I$ and $b,b'\in K$ such that $b\leq a_i$ and $b'\leq a_j$ and $g_{i,b}(b)\leq g_{j,b'}(b')$. Then
	$f_i(b)\leq f_j(b')$. Put $t\in I$ such that $a_i,a_j\leq a_t$, $f_i(a_i),f_j(a_j)\leq f_t(a_t)$. Hence,
	\[\big(\forall x\in [0,a_i] \ f_i(x)=f_t(x)\wedge g_i(a_i)\big) \quad \& \quad \big(\forall x\in [0,a_j] \ f_j(x)=f_t(x)\wedge f_j(a_j)\big)\]
	and so, for all $x\in [0,b]$,
	\[f_j(x)=f_t(x)\wedge f_j(a_j)=f_t(x)\wedge f_j(a_j)\wedge f_j(b)=f_t(x)\wedge f_j(b).\]
	We claim that for each $x\in [0,b\wedge b']$, $g_{j,b'}(x)\wedge g_{i,b}(b)=g_{i,b}(x)$.
	Let $x\in [0,b\wedge b']$. Then
	\begin{eqnarray*}
	f_j(x)\wedge f_i(a_i)&=&f_t(x)\wedge f_j(a_j)\wedge f_i(a_i)\\
	&=&f_t(x)\wedge f_i(a_i)\wedge f_i(b)\wedge f_j(a_j),\mbox{ since $f_t(x)\wedge f_i(a_i)=f_i(x)\leq f_i(b)$ }\\
	&=& f_t(x)\wedge f_i(a_i)\wedge f_i(b)\wedge f_j(a_j)\wedge f_j(b'),
	\mbox{ since $f_t(x)\wedge f_j(a_j)=f_j(x)$ and $f_j(x)\leq f_j(b')$} \\
	&=& f_t(x)\wedge f_i(b)\wedge f_j(b'), \mbox{ since $f_i(b)\leq f_i(a_i)$ and $f_j(b')\leq f_j(a_j)$ } \\
	&=& f_t(x)\wedge f_i(b),\mbox{  by the assumption} \\
	&=& f_t(x)\wedge f_i(b)\wedge f_i(a_i)=f_i(x)\wedge f_i(b)=f_i(x),
	\end{eqnarray*}
    whence $g_{j,b'}(x)\wedge g_{i,b}(b)=f_j(x)\wedge f_i(b)=f_j(x)\wedge (f_i(a_i)\wedge f_i(b))=f_i(x)\wedge f_i(b)=f_i(x)=g_{i,b}(x)$.

	(3) Let $b,b'\in K$ and $i,j\in I$ such that $b\leq a_i$ and $b'\leq a_j$. Then there is $b''\in K$ with $b,b'\leq b''$.
	Put $t\in I$ such that $b''\leq a_t$.
	\begin{eqnarray}
	\exists\, s\in I:\ a_i,a_j\leq a_s ,\ f_i(a_i)\leq f_s(a_s) ,\ f_j(a_j)\leq f_s(a_s)
	\end{eqnarray}
	similarly, there is $w\in I$ such that $a_t,a_s\leq a_w$ and $f_t(a_t),f_s(a_s)\leq f_w(a_w)$. Clearly,
	$f_i(b)\leq f_w(b)\leq f_w(b'')$ and $f_j(b')\leq f_w(b')\leq f_w(b'')$.
	
	From (1)--(3) it follows that $g$ is an $EMV$-morphism.  Clearly, $f\approx g$. Indeed, for each $b\in K$, there is $i\in I$ such that
	$b\leq a_i$ for all $x\in [0,b]$ we have
	$g_{i,b}(x)=f_i(x)$. That is, $f\approx g$.
\end{proof}

Consider the assumptions of Proposition \ref{9.3}. In the next theorem, we show that if, for each $b\in K$, we put
a special function $g_b$ to be equal to  $f_j|_{_{[0,b]}}$ from $\{f_i|_{_{[0,b]}}\mid i\in I, b\leq a_i\}$, then $\{g_b\mid b\in K\}$ is again an $EMV$-morphism
which is similar to $f$. For each $i\in I$, there exists $b\in K$ such that $a_i\leq b$. Also, there is $j\in I$ such that
$a_i,b\leq a_j$ and  $f_i(a_i)\leq f_j(a_j)$. Since $f$ is an $EMV$-morphism, then $f_i(x)=f_j(x)\wedge f_i(a_i)$ for
all $x\in [0,a_i]$, and so $f_j(b)\geq f_j(a_i)\geq f_i(a_i)$. Set $g_b:=f_j|_{_{[0,b]}}$. Then $g_b(b)\geq f_i(a_i)$.
We can easily check that $\{g_b(b)\mid b\in K\}$ has the following property:
\begin{equation}\label{additin pro}
\forall t\in I\ \exists\, b\in K\ : a_t\leq b \ \& \ f_t(a_t)\leq g_b(b).
\end{equation}
Whence, $\{g_b(b)\mid b\in K\}$ is a full subset of $M_2$.

\begin{prop}\label{9.3.1}
Let $K$ be a full subset of idempotents of $M_1$ and $f=\{f_i\mid i\in I\}:M_1\ra M_2$ be an $EMV$-morphism with $\mathsf{e}(f)=\{a_i\mid i\in I\}$.
	If, for each $b\in K$, $g_{b}:=f_{i}|_{_{[0,b]}}$, where $i\in I$ and $b\leq a_i$ such that $\{g_b(b)\mid b\in K\}$ satisfies the condition {\rm(\ref{additin pro})},
	then $g=\{g_b\mid b\in K\}$ is an $EMV$-morphism and $g\approx f$.
\end{prop}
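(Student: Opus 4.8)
The plan is to verify the four defining conditions of an $EMV$-morphism from Definition \ref{9.1} for the family $g=\{g_b\mid b\in K\}$, and then to read off the similarity $g\approx f$ directly from the construction. Conditions (i) and (ii) are essentially already in hand: $K$ is full in $\mI(M_1)$ by hypothesis, and each $g_b(b)=f_i(b)$ (where $g_b=f_i|_{_{[0,b]}}$ with $b\le a_i$) is an idempotent of $M_2$ since $f_i(b)\oplus f_i(b)=f_i(b\oplus b)=f_i(b)$. The fullness of $\{g_b(b)\mid b\in K\}$ in $M_2$, i.e. condition (ii), is exactly the consequence of (\ref{additin pro}) recorded just before the statement: given an idempotent $d\in M_2$, fullness of $\{f_t(a_t)\mid t\in I\}$ gives $t\in I$ with $d\le f_t(a_t)$, and (\ref{additin pro}) then supplies $b\in K$ with $f_t(a_t)\le g_b(b)$, whence $d\le g_b(b)$.

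Condition (iv) is where (\ref{additin pro}) does its essential work, and it is the simplest step. Given $b,b'\in K$, write $g_b=f_i|_{_{[0,b]}}$ and $g_{b'}=f_j|_{_{[0,b']}}$ with $b\le a_i$, $b'\le a_j$. Applying condition (iv) of Definition \ref{9.1} to $f$, I pick $t\in I$ with $a_i,a_j\le a_t$ and $f_i(a_i),f_j(a_j)\le f_t(a_t)$. Now (\ref{additin pro}) furnishes $b''\in K$ with $a_t\le b''$ and $f_t(a_t)\le g_{b''}(b'')$. Then $b\le a_i\le a_t\le b''$ and $b'\le a_j\le a_t\le b''$, while $g_b(b)=f_i(b)\le f_i(a_i)\le f_t(a_t)\le g_{b''}(b'')$ and likewise $g_{b'}(b')\le g_{b''}(b'')$; so $b''$ is the required common upper bound.

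The main obstacle is condition (iii), which is the one genuinely computational step. Suppose $b,b'\in K$ with $g_b(b)\le g_{b'}(b')$, say $g_b=f_i|_{_{[0,b]}}$, $g_{b'}=f_j|_{_{[0,b']}}$. As above I choose $t\in I$ with $a_i,a_j\le a_t$ and $f_i(a_i),f_j(a_j)\le f_t(a_t)$, so that condition (iii) for $f$ yields $f_i(y)=f_t(y)\wedge f_i(a_i)$ on $[0,a_i]$ and $f_j(y)=f_t(y)\wedge f_j(a_j)$ on $[0,a_j]$. For $x\in[0,b\wedge b']$ I then compute
\[
g_{b'}(x)\wedge g_b(b)=f_j(x)\wedge f_i(b)=f_t(x)\wedge f_j(a_j)\wedge f_i(a_i),
\]
using $x\le b$ (so $f_t(x)\le f_t(b)$) to absorb the factor $f_t(b)$ coming from $f_i(b)=f_t(b)\wedge f_i(a_i)$. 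The decisive inequality is $f_i(x)\le f_j(a_j)$, which holds because $f_i(x)\le f_i(b)=g_b(b)\le g_{b'}(b')=f_j(b')\le f_j(a_j)$; it lets me drop the factor $f_j(a_j)$ and conclude $g_{b'}(x)\wedge g_b(b)=f_t(x)\wedge f_i(a_i)=f_i(x)=g_b(x)$, which is exactly (iii).

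Finally, the similarity $g\approx f$ is immediate from the construction and needs no new idea: for each $b\in K$ the chosen index $i\in I$ satisfies $b\le a_i$, and since $g_b=f_i|_{_{[0,b]}}$ and $x\le b$ forces $f_i(x)\le f_i(b)=g_b(b)$, we obtain $g_b(x)=f_i(x)=f_i(x)\wedge g_b(b)$ for all $x\in[0,b]$. This is precisely the requirement of Definition \ref{hhhh} for $g\approx f$, so the family $g$ is an $EMV$-morphism similar to $f$.
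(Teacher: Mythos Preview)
Your proof is correct and follows essentially the same approach as the paper's. The only difference is that for condition (iii) the paper simply invokes Proposition~\ref{9.3} (where the analogous computation $g_{i,b}(x)=g_{j,b'}(x)\wedge g_{i,b}(b)$ was already carried out), while you redo this computation directly; your version is slightly more streamlined but uses the same idea of passing to a common $t\in I$ and absorbing terms via $f_i(x)\le g_b(b)\le g_{b'}(b')\le f_j(a_j)$.
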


\begin{proof}
	Since $\mathsf{e}(f)$ is a full subset of $M_1$, then $\mathsf{e}(g)=K$.
	
	(1) Since $\{f_i(a_i)\mid i\in I\}$ is a full subset of $M_2$, then by (\ref{additin pro}), $\{g_b(b)\mid b\in K\}$ is a
	full subset of $M_2$, too.
	
	(2) If $b,b'\in K$, then there exist $i,j\in I$ such that $b\leq a_i$, $b'\leq a_j$, $g_b(b)=f_i(b)$ and $g_{b'}(b')=f_j(b')$.
	Put $t\in I$ such that $a_i,a_j\leq a_t$ and
	$f_i(a_i),f_j(a_j)\leq f_t(a_t)$. By  (\ref{additin pro}), there exists $b''\in K$ with $f_t(a_t)\leq g_{b''}(b'')$.
	It follows that $g_b(b),g_{b'}(b')\leq g_{b''}(b'')$.
	
	(3) Let $b,b'\in K$ such that $g_b(b)\leq g_{b'}(b')$. Then there are $i,j\in I$ such that $b\leq a_i$, $b'\leq a_j$,
	$g_b=f_i|_{_{[0,b]}}=g_{i,b}$ and $g_{b'}=f_j|_{_{[0,b']}}=g_{j,b'}$. By Proposition {\rm \ref{9.3}}, for all
	$x\in [0,b\wedge b']$, we have $g_b(x)=g_{b'}(x)\wedge g_b(b)$.
	From (1)--(3) it follows that $\{g_b\mid b\in K\}$ is an $EMV$-morphism.
Now, we show that $f\approx g$. Let $b\in K$. Then there is $i\in I$ such that $b\leq a_i$ and $g_b=f_i|_{_{[0,b]}}$ and so
\[g_b(b)\leq f_i(b)\leq f_i(a_i)\quad \& \quad \forall x\in [0,b],\  g_b(x)=f_i(x)\wedge g_b(b).\]
Therefore, $g=\{g_b\mid b\in K\}\approx f$.
\end{proof}

\begin{rmk}\label{I(M)}
	(i) In the last proposition, the definition of $g=\{g_b\mid b\in K\}$ does not depend on the choice of $f_i$ for given $b$.
	Indeed, if $h=\{h_b\mid b\in K\}$ is defined by $h_b=f_j(b)$, where $b\leq a_j$ and $\{h_b(b)\mid b\in K\}$
	satisfies in (\ref{additin pro}), then
	by Proposition \ref{9.3.1}, $h\approx f\approx g$. So we can feel free for choosing a suitable $f_i$ related to $b$. The only thing we must care is
	(\ref{additin pro}).
	
	(ii) If we consider $K=\mI(M_1)$ in Proposition \ref{9.3.1}, then we have $\mathsf{e}(g)=\mathsf{e}(h)=\mI(M_1)$ and $g\approx h\approx f$.
	Thus without loss of generality we can assume that $\mathsf{e}(f)=\mI(M_1)$.
\end{rmk}

An $EMV$-morphism $f=\{f_i\mid i\in I\}:M_1\ra M_2$ with $\mathsf{e}(f)=\{a_i\mid i\in I\}$ is called an {\em $EMV$-$\approx$-isomorphism}
if there exists an $EMV$-morphism $g=\{g_j\mid j\in J\}:M_2\ra M_1$ such that $g\circ f\approx Id_{M_1}$ and $f\circ g\approx Id_{M_2}$, and we say that $M_1$ and $M_2$ are $\approx$-{\it isomorphic}.

\begin{prop}\label{equal to identity}
	Let $f=\{f_i\mid i\in I\}:M\ra M$ be an $EMV$-morphism such that $\mathsf{e}(f)=\{a_i\mid i\in I\}$.
	Then $f\approx Id_M$ if and only if, for all  $i\in I$, we have
	\begin{itemize}[nolistsep]
		\item[{\rm (i)}] $f_i(x)\leq x$ for all $x\in [0,a_i]$;
		\item[{\rm (ii)}] $f_i(x)=x$ for all $x\leq f_i(a_i)$.
	\end{itemize}

\begin{proof}
	Let $f\approx Id_M$ and $i$ be an arbitrary element of $I$. Then there is $b\in\mI(M)$ such that $a_i\leq b$ and
	$f_i(x)=Id_M(x)\wedge f_i(a_i)$ for all $x\in [0,a_i]$. Clearly, for all $x\in [0,a_i]$, we have $f_i(x)\leq x$, specially,
	$f_i(a_i)\in [0,a_i]$. Also, for each $x\leq f_i(a_i)$,
	$f_i(x)=x\wedge f_i(a_i)=x$. Conversely, let (i) and (ii) hold. Put $i\in I$. Then by (i), $f_i(a_i)\leq a_i$. Since
	$\mathsf{e}(f)$ is a full subset of $M$, then there exists $j\in I$ such that $a_i\leq f_j(a_j)$ and so by definition,
	for all $x\in [0,a_i]$, we have 	$f_i(x)=f_j(x)\wedge f_i(a_i)$. Since $a\leq f_j(a_j)$, by (ii),
	$f_j(x)=x$ for all $x\in [0,a_i]$, and thus
	\[f_i(x)=f_j(x)\wedge f_i(a_i)=x\wedge f_i(a_i)=Id_M(x)\wedge f_i(a_i),\quad\forall x\in [0,a_i].\]
	Therefore, $f\approx Id_M$.
\end{proof}

\begin{cor}\label{isomorphism}
	Let $f=\{f_i\mid i\in I\}:M_1\ra M_2$ be an $EMV$-$\approx$-isomorphism with $\mathsf{e}(f)=\{a_i\mid i\in I\}$, and let $g=\{g_j\mid j\in J\}:M_2\ra M_1$ with $\mathsf{e}(g)=\{b_j\mid j\in J\}$ be an $EMV$-morphism such that $f\circ g\approx Id_{M_2}$ and $g\circ f\approx Id_{M_1}$. Then $f\circ g=\{f_i\circ g_j\mid (i,j)\in U\}$ and $g\circ f=\{g_j\circ f_i\mid (j,i)\in V\}$, where $U=\{(i,j)\in I\times J\mid f_i(a_i)\leq b_j\}$ and	 $V=\{(s,t)\in J\times I\mid g_s(b_s)\leq a_t\}$.
	 Consider the $EMV$-morphisms $h=\{h_{i,j}\mid (i,j)\in U\}:M_1\ra M_1$ and $k=\{k_{s,t}\mid (s,t)\in V\}:M_2\ra M_2$ defined by
	 $h_{i,j}(x)=g_j(f_i(x))$ if $(i,j)\in U$ and $x\leq g_j(f_i(a_i))$, and
	 $k_{s,t}(x)=f_t(g_s(x))$ if $(s,t)\in V$ and $x\leq f_s(g_t(b_t))$. 	
	 By Proposition {\rm \ref{equal to identity}}, for all $(i,j)\in U$ and
	 $(s,t)\in V$, we get
	 \[h_{i,j}(y)=f_j\circ g_i(y)=y,\  \forall y\leq f_j(g_i(b_i)),  \quad \quad k_{s,t}(x)= g_t\circ f_s(x)=x,\  \forall x\leq g_t(f_s(a_s)).\]
	 Therefore, $h\approx Id_{M_1}$ and $k\approx Id_{M_2}$.
\end{cor}

\begin{exm}
	In Example \ref{exm3.4}, if $\{f_i:M_i\ra N_i\mid i\in\mathbb{N}\}$ is a family of $MV$-isomorphisms, then we can easily see that
	$f=\{f_I\mid I\in S\}$ is an $EMV$-$\approx$-isomorphism.
\end{exm}

\end{prop}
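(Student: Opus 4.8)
The plan is to produce an explicit candidate inverse $g$ built from the inverses $f_i^{-1}$ of the given $MV$-isomorphisms, and then to show that both composites $g\circ f$ and $f\circ g$ collapse, block by block, to genuine identity maps. Once that is established, the criterion of Proposition \ref{equal to identity} applies at once, since an $EMV$-morphism whose every component is literally the identity on its domain trivially satisfies conditions (i) and (ii) there.

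First I would record the two computations that drive everything. Since each $f_i$ preserves the top element, $f_I(a_I)=b_I$ for every $I\in S$. Dually, the inverses $f_i^{-1}:N_i\ra M_i$ are again $MV$-homomorphisms (because the $f_i$ are $MV$-isomorphisms), so applying the construction of Example \ref{exm3.4} to the family $\{f_i^{-1}\mid i\in\mathbb{N}\}$ yields an $EMV$-morphism $g=\{g_I\mid I\in S\}:\sum_{i\in\mathbb{N}} N_i\ra \sum_{i\in\mathbb{N}} M_i$, where $g_I:[0,b_I]\ra [0,a_I]$ sends $(y_i)_{i\in\mathbb{N}}$ to the element whose $i$-th coordinate is $f_i^{-1}(y_i)$ for $i\in I$ and $0_i$ otherwise; in particular $g_I(b_I)=a_I$. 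Next I would form the composites via Proposition \ref{9.2}. Because $f_I(a_I)=b_I$, we have $f_I(a_I)\leq b_J$ iff $I\s J$, so $g\circ f=\{g_J\circ f_I\mid I\s J\}$ with $\mathsf{e}(g\circ f)=\{a_I\mid I\in S\}$. For $I\s J$ and $x=(x_i)_{i\in\mathbb{N}}\in[0,a_I]$ (so $x_i=0_i$ for $i\notin I$), the definitions of $f_I$ and $g_J$ give coordinatewise $(g_J\circ f_I)(x)_i=f_i^{-1}(f_i(x_i))=x_i$ for $i\in I$ and $0_i$ elsewhere; hence $g_J\circ f_I=Id_{[0,a_I]}$ and $(g_J\circ f_I)(a_I)=a_I$. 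The symmetric argument, interchanging the roles of $f_i$ and $f_i^{-1}$, shows that $f\circ g=\{f_J\circ g_I\mid I\s J\}$ and that each block $f_J\circ g_I$ equals $Id_{[0,b_I]}$.

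Finally I would invoke Proposition \ref{equal to identity}: since every block of $g\circ f$ is the identity on its domain, conditions (i) and (ii) hold trivially, so $g\circ f\approx Id_{M_1}$ with $M_1=\sum_{i\in\mathbb{N}} M_i$; symmetrically $f\circ g\approx Id_{M_2}$ with $M_2=\sum_{i\in\mathbb{N}} N_i$. Thus $g$ witnesses that $f$ is an $EMV$-$\approx$-isomorphism. The only point demanding care is the bookkeeping of the composition index set $U$ in Proposition \ref{9.2} and the verification that the composed blocks are \emph{literal} identities rather than merely dominated by the identity; once the identities $f_I(a_I)=b_I$ and $g_I(b_I)=a_I$ are in hand this is routine, so no serious obstacle remains.
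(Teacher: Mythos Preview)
Your argument is correct. The paper itself offers no proof for this example beyond ``we can easily see that $f=\{f_I\mid I\in S\}$ is an $EMV$-$\approx$-isomorphism'', so there is no alternative approach to compare against; your construction of $g$ from the inverses $f_i^{-1}$ and the block-by-block verification that $g_J\circ f_I=Id_{[0,a_I]}$ (and dually) is exactly the natural way to flesh out the claim, and the appeal to Proposition~\ref{equal to identity} is the intended mechanism.
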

\begin{defn}\label{def100}
	Let $f=\{f_i\mid i\in I\}:M_1\ra M_2$ and $g=\{g_j\mid j\in J\}:M_1\ra M_2$  be two $EMV$-morphisms
	such that $\textsf{e}(f)=\{a_i\mid i\in I\}$ and $\textsf{e}(g)=\{b_j\mid j\in J\}$.
     Let $x,y\in M_1$. We say that $f(x)=g(y)$ if, for each $i\in I$ and each $j\in J$ such that
	$x\leq a_i$, $y\leq b_j$ and $f_i(a_i)\leq g_j(b_j)$, then $f_i(x)=g_j(y)\wedge f_i(a_i)$.
\end{defn}
	Consider the assumptions in Definition \ref{def100}. Then $f(x)=g(y)$ if and only if, for each	$i\in I$ and each $j\in J$ such that
	$x\leq a_i$, $y\leq b_j$ and $f_i(a_i)\geq g_j(b_j)$, we have $g_j(y)=f_i(x)\wedge g_j(b_j)$.  Indeed,
	let $i\in I$ and $j\in J$ such that $x\leq a_i$, $y\leq b_j$ and $g_j(b_j)\leq f_i(a_i)$. Then we can find $k\in J$
	such that $b_j\leq b_k$ and $f_i(a_i)\leq g_k(b_k)$. Since $f(x)=g(y)$, then by definition,
	$f_i(x)=g_k(y)\wedge f_i(a_i)$ and
	\[f_i(x)\wedge g_j(b_j)=g_k(y)\wedge f_i(a_i)\wedge g_j(b_j)=g_k(x)\wedge g_j(b_j)=g_j(y).\]
	The proof of the converse is similar.

In the next proposition, we find a relation between $\approx$ and the concept introduced in Definition  \ref{def100}.

	\begin{prop}\label{pr:f=g}
		Let $f=\{f_i\mid i\in I\}:M_1\ra M_2$ and $g=\{g_j \mid j\in J\}:M_1\ra M_2$  be two $EMV$-morphisms
		such that $\textsf{e}(f)=\{a_i\mid i\in I\}$ and $\textsf{e}(g)=\{b_j\mid j\in J\}$.
		\begin{itemize}[nolistsep]
			\item[{\rm (i)}]  $f\approx g$ \iff $f(x)=g(x)$ for all $x\in M_1$.
			\item[{\rm (ii)}] $f(x)=f(y)$ if and only if, for each $i\in I$ with $x,y\leq a_i$, we have $f_i(x)=f_i(y)$.
		\end{itemize}
	\end{prop}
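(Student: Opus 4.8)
The plan is to reduce both equivalences to the single structural device that controls $EMV$-morphisms: given two component indices, one passes via the directedness condition (iv) of Definition \ref{9.1} to a common index lying above both, and then condition (iii) of Definition \ref{9.1} (equivalently Proposition \ref{pr:properties}(1)) rewrites each component map as the restriction of the common one meet-ed with the relevant idempotent. Once every value is expressed through one ``upper'' map $f_t$ (resp.\ $g_t$), the claimed identities follow by elementary lattice manipulation, using that $u\wedge v=u$ whenever $u\le v$.

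For (i) the backward direction is the cleaner one. Fix $i\in I$. Since $\{g_j(b_j)\}_{j}$ is full in $M_2$ and $\textsf{e}(g)$ is full in $\mathcal I(M_1)$, I would pick indices witnessing $f_i(a_i)\le g_{j_1}(b_{j_1})$ and $a_i\le b_{j_2}$, and then (iv) produces $j\in J$ with $a_i\le b_j$ and $f_i(a_i)\le g_j(b_j)$. Now for every $y\in[0,a_i]$ one has $y\le a_i\le b_j$, so the hypothesis $f(y)=g(y)$ applied to the pair $(i,j)$ yields exactly $f_i(y)=g_j(y)\wedge f_i(a_i)$, which is the defining clause of $f\approx g$. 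For the forward direction, assume $f\approx g$ and fix $x\le a_i$, $x\le b_j$ with $f_i(a_i)\le g_j(b_j)$. Similarity gives $k\in J$ with $a_i\le b_k$ and $f_i(x)=g_k(x)\wedge f_i(a_i)$; evaluating this relation at $a_i$ shows $f_i(a_i)\le g_k(a_i)\le g_k(b_k)$. Choosing via (iv) a $t\in J$ above both $b_j$ and $b_k$, condition (iii) rewrites $g_j(x)\wedge f_i(a_i)$ and $g_k(x)\wedge f_i(a_i)$ both as $g_t(x)\wedge f_i(a_i)$ (here the inequalities $f_i(a_i)\le g_j(b_j)$ and $f_i(a_i)\le g_k(b_k)$ absorb the extra meets), so $f_i(x)=g_k(x)\wedge f_i(a_i)=g_j(x)\wedge f_i(a_i)$, i.e.\ $f(x)=g(x)$.

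For (ii), I would specialize Definition \ref{def100} to the case $g=f$. The forward direction is immediate: if $x,y\le a_i$, then taking both indices equal to $i$ in the definition of $f(x)=f(y)$ gives $f_i(x)=f_i(y)\wedge f_i(a_i)=f_i(y)$, since $f_i(y)\le f_i(a_i)$. Conversely, assume $f_i(x)=f_i(y)$ whenever $x,y\le a_i$, and take $i,j$ with $x\le a_i$, $y\le a_j$ and $f_i(a_i)\le f_j(a_j)$. Use (iv) to get $t$ with $a_i,a_j\le a_t$ and $f_i(a_i),f_j(a_j)\le f_t(a_t)$; then $x,y\le a_t$, so the hypothesis gives $f_t(x)=f_t(y)$, while (iii) gives $f_i(x)=f_t(x)\wedge f_i(a_i)$ and $f_j(y)=f_t(y)\wedge f_j(a_j)$. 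Combining these, $f_j(y)\wedge f_i(a_i)=f_t(y)\wedge f_i(a_i)=f_t(x)\wedge f_i(a_i)=f_i(x)$, which is the required clause for $f(x)=f(y)$.

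The only genuinely delicate step is the forward direction of (i), where three indices interact---the given $i$, the similarity-witness $k$, and the test index $j$---and one must verify the auxiliary inequality $f_i(a_i)\le g_k(b_k)$ (obtained by evaluating the similarity relation at $a_i$) before the two expressions can be funneled through a common $g_t$. Everything else is bookkeeping with the directedness (iv) and the meet-compatibility (iii), which I would cite from Definition \ref{9.1} and Proposition \ref{pr:properties} rather than reprove.
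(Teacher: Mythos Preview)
Your proposal is correct and follows essentially the same route as the paper. The paper dismisses (i) as ``straightforward'' because the forward direction is precisely Proposition~\ref{eq.property}(i) (your argument via the common upper index $t\in J$ reproduces that proof almost verbatim), while your backward direction is a direct unfolding of Definition~\ref{hhhh}; for (ii) your argument and the paper's are identical, passing to a common $t$ via (iv), using the hypothesis at $a_t$, and then restricting back via (iii).
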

	
	\begin{proof}
		(i)  The proof is straightforward.
		
		(ii) Suppose that $i\in I$ such that $x,y\leq a_i$. Then $f_i(x)=f_i(y)\wedge f_i(a_i)=f_i(y)$.
		Conversely, let $x\leq a_i$ and $y\leq a_j$ such that $f_i(a_i)\leq f_j(a_j)$ for some $i,j\in I$.
		Then there exists $k\in I$ such that $a_i,a_j\leq a_k$ and $f_i(a_i),f_j(a_j)\leq f_k(a_k)$.
		By the assumption, $f_k(x)=f_k(y)$, so that
		$f_j(y)\wedge f_i(a_i)=(f_k(y)\wedge f_j(a_j))\wedge f_i(a_i)=f_k(y)\wedge f_i(a_i)=f_k(x)\wedge f_i(a_i)=f_i(x)$.
		That is, $f(y)=f(x)$.
	\end{proof}

\begin{prop}\label{9.4}
	Let $f=\{f_i\mid i\in I\}:M_1\ra M_2$ with $\mathsf{e}(f)=\{a_i\mid i \in I\}$ be an $EMV$-morphism. Then
	\begin{eqnarray*}
		\ker(f):=\{(x,y)\in M_1\times M_1\mid  f_i(x)=f_i(y),\ \forall a_i\geq x,y \}
	\end{eqnarray*}
	is a congruence relation on $M_1$.
\end{prop}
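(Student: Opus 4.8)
The plan is to invoke the congruence criterion of Proposition~\ref{3.9}: it suffices to show that $\ker(f)$ is an equivalence relation compatible with $\vee$, $\wedge$, $\oplus$, and that for every $(x,y)\in\ker(f)$ there is an idempotent $b$ with $x,y\le b$ and $(\lam_b(x),\lam_b(y))\in\ker(f)$. Reflexivity and symmetry of $\ker(f)$ are immediate from the defining condition $f_i(x)=f_i(y)$. For transitivity, given $(x,y),(y,z)\in\ker(f)$ and any $a_i\ge x,z$ with $i\in I$, I would choose (by ($EMV4$) and fullness of $\e(f)$) some $a_k\in\e(f)$ with $a_k\ge y$, and then apply the directedness condition (iv) of Definition~\ref{9.1} to $i,k$ to obtain $t\in I$ with $a_i,a_k\le a_t$ and $f_i(a_i)\le f_t(a_t)$. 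Since $a_t$ dominates $x,y,z$, the hypotheses give $f_t(x)=f_t(y)=f_t(z)$, while condition (iii) of Definition~\ref{9.1} yields $f_i(w)=f_t(w)\wedge f_i(a_i)$ for all $w\in[0,a_i\wedge a_t]=[0,a_i]$; taking $w=x,z$ gives $f_i(x)=f_i(z)$, as required.

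Next I would dispatch compatibility with $\oplus$, $\vee$, $\wedge$, which is the routine part. Given $(x,y),(u,v)\in\ker(f)$ and any idempotent $a_i\ge x\oplus u,\ y\oplus v$, monotonicity of $\oplus$ (and $0$ being least) forces $a_i\ge x,y,u,v$, so $f_i(x)=f_i(y)$ and $f_i(u)=f_i(v)$; since each $f_i$ is an $MV$-homomorphism it preserves $\oplus$, whence $f_i(x\oplus u)=f_i(x)\oplus f_i(u)=f_i(y)\oplus f_i(v)=f_i(y\oplus v)$. As $a_i$ was arbitrary, $(x\oplus u,y\oplus v)\in\ker(f)$. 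Replacing $\oplus$ by $\vee$ and by $\wedge$ and using that $f_i$ is a lattice homomorphism settles the lattice compatibilities verbatim.

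The remaining, and principal, obstacle is the $\lam_b$-condition, precisely because $\lam_b$ genuinely depends on the ambient idempotent $b$, so a value $\lam_b(x)$ must be compared against $f_j$ for idempotents $a_j$ unrelated to $b$. Given $(x,y)\in\ker(f)$, I would pick $b=a_i\in\e(f)$ with $x,y\le a_i$, so that $f_i(x)=f_i(y)$. To verify $(\lam_b(x),\lam_b(y))\in\ker(f)$, take an arbitrary $a_j\in\e(f)$ with $a_j\ge\lam_b(x),\lam_b(y)$ and, using (iv), an $a_t$ with $a_i,a_j\le a_t$ and $f_i(a_i),f_j(a_j)\le f_t(a_t)$. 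By Proposition~\ref{3.5}(i), $\lam_b(x)=\lam_{a_i}(x)=\lam_{a_t}(x)\wedge a_i$, and since $f_t$ is an $MV$-homomorphism on $[0,a_t]$ it preserves both the complement and $\wedge$, giving
\[
f_t(\lam_b(x))=\lam_{f_t(a_t)}(f_t(x))\wedge f_t(a_i),
\]
with the identical formula for $y$.

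Because $a_t\ge x,y$ forces $f_t(x)=f_t(y)$, these two expressions coincide, so $f_t(\lam_b(x))=f_t(\lam_b(y))$. Finally, condition (iii) of Definition~\ref{9.1} applied to $f_j(a_j)\le f_t(a_t)$ gives $f_j(w)=f_t(w)\wedge f_j(a_j)$ for all $w\in[0,a_j\wedge a_t]=[0,a_j]$; substituting $w=\lam_b(x),\lam_b(y)$ (both lie in $[0,a_j]$) transports the equality down to $f_j$, so $f_j(\lam_b(x))=f_j(\lam_b(y))$. As $a_j$ was arbitrary, $(\lam_b(x),\lam_b(y))\in\ker(f)$, and Proposition~\ref{3.9} then concludes that $\ker(f)$ is a congruence on $M_1$.
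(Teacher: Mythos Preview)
Your overall architecture matches the paper's proof exactly: invoke Proposition~\ref{3.9}, verify the equivalence-relation axioms, check compatibility with $\oplus,\vee,\wedge$, and finally handle $\lambda_b$ by passing up to a common dominating $a_t$ via condition~(iv) and back down via condition~(iii). Your treatment of transitivity and of the $\lambda_b$-step is correct and is essentially the paper's argument.

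There is, however, a genuine gap in your compatibility argument for $\wedge$. You write that the $\oplus$ argument transfers ``verbatim'' to $\wedge$, but the crucial step ``$a_i\ge x\oplus u$ forces $a_i\ge x,u$'' has no analogue for meets: from $a_i\ge x\wedge u$ and $a_i\ge y\wedge v$ you \emph{cannot} conclude $a_i\ge x,y,u,v$, so you have no right to evaluate $f_i$ at $x,y,u,v$ separately. This is precisely why the paper singles out the $\wedge$ case for a separate argument: given $a_k\ge x\wedge u,\,y\wedge u$ it first climbs (using fullness and (iv)) to some $a_z$ dominating $a_k$, $a_i$ \emph{and} $u$, computes $f_z(x\wedge u)=f_z(x)\wedge f_z(u)=f_z(y)\wedge f_z(u)=f_z(y\wedge u)$ there, and then descends to $f_k$ via condition~(iii). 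Once you insert this step, your proof is complete and coincides with the paper's.
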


\begin{proof}
	Clearly, $\ker(f)$ is reflexive and symmetric. Let $(x,y),(y,z)\in\ker(f)$. Put $i\in I$ such that
	$x,y,z\leq a_i$. Then $f_i(x)=f_i(y)=f_i(z)$. For each $j\in I$ if $x,z\leq a_j$, then there exists $s\in I$ such that
	$a_j,a_i\leq a_s$ and $f_i(a_i),f_j(a_j)\leq f_s(a_s)$. It follows that
	\[f_j(x)=f_s(x)\wedge f_j(a_j)=f_s(y)\wedge f_j(a_j)=f_s(z)\wedge f_j(a_j)=f_j(z).\]
	Therefore, $\ker(f)$ is transitive.
	Now, let $(x,y)\in\ker(f)$ and $x,y\leq a_i$ for  $i\in I$. Given $u$, if $w\in I$ such that $x\vee u,y\vee u\leq a_w$, then
	we have $f_w(x)=f_w(y)$ and so $f_w(x\vee u)=f_w(x)\vee f_w(u)=f_w(y)\vee f_w(u)=f_w(x\vee u)$ (since $f_w$ is an
	$MV$-homomorphism) consequently, $(x\vee u,y\vee u)\in\ker(f)$. In a similar way, $(x\oplus u,y\oplus u)\in\ker(f)$.
	Also, for each $k\in I$ with $x\wedge u,y\wedge u\leq a_k$, there is $z\in I$ such that $a_z\geq a_k,a_i,u$ and
	$f_z(a_z)\geq f_i(a_i),f_k(a_k)$, whence
	$f_z(x)=f_z(y)$ and $f_k(c)=f_z(c)\wedge f_k(a_k)$ for all $c\in [0,a_k]$. Thus, from
	\[ f_k(x\wedge u)=f_z(x\wedge u)\wedge f_k(a_k)=f_z(x)\wedge f_z(u)\wedge f_k(a_k)=f_z(y)\wedge f_z(u)\wedge f_k(a_k)=
	f_z(y\wedge u)\wedge f_k(a_k)=f_k(y\wedge u),\]
	we conclude that $(x\wedge u,y\wedge u)\in\ker(f)$. By Proposition \ref{3.9}, it remains to show that for each $(x,y)\in\ker(f)$,
	there is $b\in \mI(M_1)$ such that $x,y\leq b$ and $(\lam_b(x),\lam_b(y))\in \ker(f)$.
	Let $(x,y)\in\ker(f)$. Then there exists $i\in I$ such that $x,y\leq a_i$ and so $f_i(x)=f_i(y)$. Set $b:=a_i$. For all $j\in I$
	with $\lam_b(x),\lam_b(y)\leq a_j$, there is $k\in I$ such that $a_k\geq b,a_j$ and
	$f_k(a_k)\geq f_i(a_i),f_j(a_j)$. It follows that
	\begin{equation}
	\label{9.4.R1} f_j(c)=f_k(c)\wedge f_j(a_j),\quad \forall c\in [0,a_j].
	\end{equation}
	Consequently,
	\begin{eqnarray*}
	f_j(\lam_b(x))&=& f_k(\lam_b(x))\wedge f_j(a_j)=f_k(\lam_{a_k}(x)\wedge b)\wedge f_j(a_j), \mbox{ by Proposition \ref{3.5}} \\
	&=& \lam_{f_k(a_k)}(f_k(x))\wedge f_k(b)\wedge f_j(a_j), \mbox{ since $f_k:[0,a_k]\ra [0,f_k(a_k)]$ is an $MV$-homomorphism}\\
	&=& \lam_{f_k(a_k)}(f_k(y))\wedge f_k(b)\wedge f_j(a_j)=f_k(\lam_{a_k}(y)\wedge b)\wedge f_j(a_j)\\
	&=& f_k(\lam_b(y))\wedge f_j(a_j)=f_j(\lam_b(y)).
    \end{eqnarray*}
    Therefore, $\ker(f)$ is a congruence relation on $M_1$.
\end{proof}

\begin{cor}\label{9.5}
	If $\theta$ is a congruence relation on an $EMV$-algebra $(M;\vee,\wedge,\oplus,0)$, then the natural map
	$\pi_\theta=\{\pi_a\mid a\in \mI(M)\}:M\ra M/\theta$, where $\pi_a(x)=x/\theta$ for all $x\in [0,a]$ and for all $a\in\mI(M)$, is an $EMV$-morphism.
\end{cor}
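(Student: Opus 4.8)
The plan is to verify directly that the family $\pi_\theta=\{\pi_a\mid a\in\mI(M)\}$ satisfies the four defining conditions (i)--(iv) of Definition \ref{9.1}, with index set $I=\mI(M)$, together with the requirement that every member be an $MV$-homomorphism. Throughout I would exploit the fact, recalled before Lemma \ref{le:x<y}, that the canonical surjection $q\colon M\ra M/\theta$, $q(x)=x/\theta$, is an $EMV$-homomorphism onto $M/\theta$ and that $[\lambda_b(x)]=\lambda_{[b]}([x])$ for every $b\in\mI(M)$; the point to keep in mind is simply that $\pi_a=q|_{_{[0,a]}}$ for each idempotent $a$, so $\pi_a(x)=x/\theta$ depends only on $x$ and not on the dominating idempotent chosen.

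First I would check that each $\pi_a$ is an $MV$-homomorphism from $[0,a]$ into $[0,a/\theta]$. Since $a\in\mI(M)$, $[0,a]$ is an $MV$-algebra, and $a/\theta\in\mI(M/\theta)$, so $[0,a/\theta]$ is an $MV$-algebra. Because $q$ preserves $\oplus$, $0$, and the order, $\pi_a$ maps $[0,a]$ into $[0,a/\theta]$ and preserves $\oplus$ and $0$; preservation of the negation is exactly $\pi_a(\lambda_a(x))=\lambda_a(x)/\theta=\lambda_{a/\theta}(x/\theta)$. Hence each $\pi_a$ is an $MV$-homomorphism, and by definition $\pi_a(a)=a/\theta$ plays the role of $b_i$.

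Next I would dispatch the easy conditions. Condition (i) is immediate, as $\mathsf{e}(\pi_\theta)=\mI(M)$ is trivially a full subset of itself. Condition (iii) is equally transparent: whenever $a/\theta\le a'/\theta$ and $x\in[0,a\wedge a']$, then
\[
\pi_{a'}(x)\wedge\pi_a(a)=(x/\theta)\wedge(a/\theta)=(x\wedge a)/\theta=x/\theta=\pi_a(x),
\]
using $x\le a$ and that $q$ preserves $\wedge$. For condition (iv), given $a,a'\in\mI(M)$ I would take $a''=a\oplus a'$, which is again idempotent (recalled in the Preliminaries) and satisfies $a,a'\le a''$; applying the order-preserving map $q$ then gives $a/\theta,a'/\theta\le a''/\theta=\pi_{a''}(a'')$.

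The only step that requires a genuine argument is condition (ii), that $\{a/\theta\mid a\in\mI(M)\}$ is a full subset of $\mI(M/\theta)$, and this is the point I expect to be the main obstacle, since it involves identifying enough idempotents of the quotient. Given an idempotent $c$ of $M/\theta$, I would write $c=x/\theta$ for some $x\in M$ and invoke (EMV4) to choose $a\in\mI(M)$ with $x\le a$. Then $a/\theta$ is idempotent in $M/\theta$ and, by order preservation, $c=x/\theta\le a/\theta$, which is precisely fullness. Combining the $MV$-homomorphism property with conditions (i)--(iv) shows that $\pi_\theta$ meets every requirement of Definition \ref{9.1}, so it is an $EMV$-morphism.
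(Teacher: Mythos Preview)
Your verification is correct and is precisely the ``straightforward'' check the paper leaves to the reader: you identify $\pi_a$ as the restriction of the canonical surjection $q$, use the known facts that $q$ is an $EMV$-homomorphism and that $q(\lambda_a(x))=\lambda_{q(a)}(q(x))$, and then run through conditions (i)--(iv) of Definition~\ref{9.1}; in particular your treatment of (ii) via $(EMV4)$ is exactly the intended argument.
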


\begin{proof}
	The proof is straightforward.
\end{proof}

If $\mathcal{EMV}$ is the set of $EMV$-algebras, $\mathcal{EMV}$ is a variety with respect to $EMV$-homomorphisms, see \cite[Thm 3.11]{Dvz}. In below we will study $\mathcal{EMV}$ with respect to $EMV$-morphisms and therefore, it will be not more a variety in such a point of view.

Let $\approx$ be the equivalence given by Definition \ref{hhhh}. We say that an $EMV$-morphism $g:M\to N$ which satisfies some conditions is $\approx$-{\it unique} if $h:M\to N$ is another $EMV$-morphisms which satisfies the same conditions, then $g$ and $h$ are similar, i.e. $g\approx h$.

In the following result, first we show that in the class $\mathcal{EMV}$ there is a weak form of the categorical product of a family of $EMV$-algebras, called the $\approx$-product: Let $\{M_i\mid i \in I\}$ be a family of $EMV$-algebras. We say that an $EMV$-algebra $\approx \prod_{i\in I}M_i$ together with a  family of $EMV$-morphisms $\{\pi_i\mid i\in I\}$, where $\pi_i:\approx \prod_{i\in I} M_i \to M_i$ is an $EMV$-morphism for each $i \in I$, is a $\approx$-{\it product} of  $\{M_i\mid i \in I\}$, if $M$ is another $EMV$-algebra together with a family
$\{f_i\mid i \in I\}$ of $EMV$-morphisms $f_i:M \to M_i$ for each $i \in
I$, then there is an $\approx$-unique $EMV$-morphism $g:M \to \approx \prod_{i\in I}M_i$ such that $\pi_i\circ g \approx f_i$ for each $i\in I$.
Using this, finally we show that the category $\mathbb{EMV}$ admits a product in the pure categorical sense.

\begin{thm}\label{9.6}
	The category $\mathbb{EMV}$ is closed under product.
\end{thm}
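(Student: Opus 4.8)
The plan is to realise the ordinary direct product of the underlying $EMV$-algebras, together with the coordinate projections, as a $\approx$-product in the sense defined just above, and then to upgrade this to a genuine categorical product in $\mathbb{EMV}$ by passing to $\approx$-classes. So let $\{M_i\mid i\in I\}$ be a family of $EMV$-algebras and put $P:=\prod_{i\in I}M_i$ with componentwise $\vee,\wedge,\oplus,0$. A routine check of (EMV1)--(EMV4) shows $P$ is an $EMV$-algebra: one has $\mI(P)=\prod_{i\in I}\mI(M_i)$, for an idempotent $c=(c_i)_i$ the interval $[0,c]$ is the $MV$-algebra $\prod_{i\in I}[0,c_i]$ with $\lambda_c$ acting coordinatewise, and (EMV4) holds since $x=(x_i)_i$ is dominated by any $(a_i)_i$ with $a_i\in\mI(M_i)$, $x_i\le a_i$. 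The coordinate map $p_i\colon P\to M_i$ preserves $\vee,\wedge,\oplus,0$ and each $\lambda$, and as $p_i(\mI(P))=\mI(M_i)$ is cofinal in $M_i$ it is a strong $EMV$-homomorphism; by Remark \ref{9.1.1} it induces an $EMV$-morphism $\pi_i=\{p_i|_{[0,c]}\mid c\in\mI(P)\}$.

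Next I would construct the mediating morphism. Let $M$ carry a cone $\{f_i\mid i\in I\}$ of $EMV$-morphisms $f_i\colon M\to M_i$. By Remark \ref{I(M)} I may assume $\mathsf{e}(f_i)=\mI(M)$ for all $i$, so $f_i=\{f_{i,a}\mid a\in\mI(M)\}$ with $f_{i,a}\colon[0,a]\to[0,f_{i,a}(a)]$. For $a\in\mI(M)$ the tuple $g_a:=(f_{i,a})_{i\in I}\colon[0,a]\to\prod_{i\in I}[0,f_{i,a}(a)]=[0,(f_{i,a}(a))_i]$ is an $MV$-homomorphism, and I set $g:=\{g_a\mid a\in\mI(M)\}$. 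Conditions (i) and (iii) of Definition \ref{9.1} for $g$ reduce coordinatewise to the same conditions for the $f_i$, and the factorisation $\pi_i\circ g\approx f_i$ is immediate, because projecting $g_a$ onto the $i$-th factor returns $f_{i,a}$.

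The main obstacle is to verify the two ``directedness/cofinality'' requirements of Definition \ref{9.1}, namely (ii) that $\{g_a(a)=(f_{i,a}(a))_i\mid a\in\mI(M)\}$ be a full subset of $P$, and (iv) that the domains and images be upward directed. For finite $I$ both follow by choosing a common idempotent upper bound $a''\in\mI(M)$ and using Proposition \ref{pr:properties}. For infinite $I$ the difficulty is genuine: a single $a$ need not dominate all coordinates of a prescribed idempotent $(c_i)_i\in\mI(P)$ simultaneously, so neither cofinality of the images nor coordinatewise directedness is automatic. I expect to resolve this by enlarging $g$ with auxiliary pieces --- $MV$-homomorphisms $[0,a]\to[0,(c_i)_i]$ assembled from homomorphisms $[0,a]\to[0,c_i]$ supplied cofinally by the cone --- chosen coherently so that Definition \ref{9.1}(iii),(iv) persist and so that, after composing with $\pi_i$, each such piece is truncated away by the $\wedge$ in Definition \ref{hhhh} and hence does not disturb $\pi_i\circ g\approx f_i$. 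This truncation is exactly the asymmetry of $\approx$ recorded in Proposition \ref{pr:f=g}. Should these coherent auxiliary pieces be unavailable, the fallback is to take as product object the sub-$EMV$-algebra of $P$ cofinally generated by the reachable idempotents $(f_{i,a}(a))_i$, on which the coordinatewise $g$ is already full and directed.

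Finally I would establish $\approx$-uniqueness and conclude. If $g'\colon M\to P$ also satisfies $\pi_i\circ g'\approx f_i$ for every $i$, then $\pi_i\circ g\approx\pi_i\circ g'$ for all $i$; since a point of $P$ is determined by its coordinates, applying Proposition \ref{pr:f=g} coordinatewise yields $g(x)=g'(x)$ for all $x\in M$, whence $g\approx g'$. Thus $(P,\{\pi_i\})$ is a $\approx$-product. Passing to $\mathbb{EMV}$, whose morphisms are the classes $[\,\cdot\,]$ with composition well defined by Proposition \ref{9.2}, the $\approx$-uniqueness of $g$ becomes honest uniqueness of $[g]$ and $[\pi_i]\circ[g]=[f_i]$; hence $(P,\{[\pi_i]\})$ is the categorical product of $\{M_i\}$, and $\mathbb{EMV}$ is closed under products.
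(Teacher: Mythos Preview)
Your overall architecture---the direct product $P=\prod_{i\in I}M_i$, the projections $\pi_i$ as strong $EMV$-homomorphisms via Remark~\ref{9.1.1}, the normalisation $\mathsf{e}(f_i)=\mI(M)$ via Remark~\ref{I(M)}, the mediating family $g=\{g_a\mid a\in\mI(M)\}$ with $g_a(x)=(f_{i,a}(x))_{i\in I}$, the check $\pi_i\circ g\approx f_i$, the $\approx$-uniqueness via Proposition~\ref{eq.property}(i), and the passage from the $\approx$-product to the categorical product in $\mathbb{EMV}$---coincides with the paper's proof.

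The divergence is at Definition~\ref{9.1}(ii),(iv) for $g$. The paper disposes of this in one line (``Clearly, $g$ is an $EMV$-morphism'') and never returns to it; you stop and worry, and you are right to. For infinite $I$ the obstruction is concrete: take $M=\sum_{n\in\mathbb N}\{0,1\}$ (finite subsets of $\mathbb N$), $M_i=\{0,1\}$, and $f_i$ the $i$-th coordinate projection. Then every $g_a(a)=(f_{i,a}(a))_{i\in I}$ has finite support, so $\{g_a(a)\}$ cannot be full in $\{0,1\}^{\mathbb N}$, and $g$ fails (ii). Worse, \emph{no} $EMV$-morphism $h\colon M\to\{0,1\}^{\mathbb N}$ can satisfy $\pi_i\circ h\approx f_i$ for all $i$: fullness forces some $h_j(a_j)=\mathbf 1$, and unwinding $\pi_i\circ h\approx f_i$ at that component gives $(h_j(a_j))_i=[i\in a_j]$ for every $i$, hence $a_j=\mathbb N$, impossible for a finite $a_j$. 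So you have located a genuine gap which the paper simply does not address. Your two proposed repairs, however, remain only sketches: the ``auxiliary pieces'' are not specified, and the ``fallback'' sub-$EMV$-algebra would depend on the cone $(M,\{f_i\})$ and therefore cannot serve as a product object. In summary, where the proofs overlap they agree; where they differ, you are more careful than the paper but neither argument closes the gap for infinite $I$.
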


\begin{proof}
We start with the class $\mathcal{EMV}$ of $EMV$-algebras.
Let $\{(M_i;\vee,\wedge,\oplus,0)\mid i\in I\}$ be a family of	 $EMV$-algebras. Consider the $EMV$-algebra $\prod_{i\in I} M_i$, which is	 the direct product of $EMV$-algebras. In what follows, we show that that	 this $\prod_{i\in I}M_i$ together with projections is the $\approx$-product of $\{M_i\mid i \in M\}$ in the class $\mathcal{EMV}$.

By Remark \ref{9.1.1}, for each $i\in I$, the map $\pi_{i}:\prod_{i\in I}
M_i\ra M_i$ is an $EMV$-morphism. Let $(M;\vee,\wedge,\oplus,0)$ be an
$EMV$-algebra and $f_i:M\ra M_i$ be an $EMV$-morphism for each $i\in I$. According to Remark \ref{I(M)}, we can suppose
that 
$f_i=\{f_{i,a}\mid a\in \mathcal I(M)\}$
and	 $\textsf{e}(f_i)=\mI(M)$ for all $i\in I$. Define $g=\{g_a\mid a\in \mI(M)\}:M\ra \prod_{i\in I} M_i$, where $g_a:[0,a]\ra [0,(f_{i,a}(x))_{i\in I}]$ sending $x$ to $(f_{i,a}(x))_{i\in I}$ for all $x\in [0,a]$. Clearly, $g$	is an $EMV$-morphism. On the other hand,	$\pi_{i}:\prod_{i\in I} M_i\ra M_i$ is a strong $EMV$-homomorphism, which follows from Remark \ref{9.1.1}, and the mapping	$\pi=\{\pi_{i,(a_i)_{i\in I}}\mid a_i\in \mI(M_i) \forall i\in I\}$ is an $EMV$-morphism, where	$\pi_{i,(a_i)_{i\in I}}: [0,(a_i)_{i\in I}]\ra [0,a_i]$ is the $i^{th}$ natural projection map.	 We claim that $\pi_i \circ g\approx f_i$ for all $i\in I$. Choose $i\in I$ and set
	\begin{eqnarray*}
	 U:=\{(a,(b_i)_{i\in I})\mid a\in \mI(M), (b_i)_{i\in I}\in \mI(\prod_{i\in I}M_i), g_a(a)\leq (b_i)_{i\in I}\}.
	\end{eqnarray*}
    Then we have
    \begin{equation}\label{9.6R1}
    U=\{(a,(b_i)_{i\in I})\mid a\in \mI(M), (b_i)_{i\in I}\in \mI(\prod_{i\in I}M_i), (f_{i,a}(a)\leq b_i,\forall i\in I)\}.
    \end{equation}
    By Proposition \ref{9.2}, we know that
    \[\pi_i \circ g=\{\pi_{i,(b_i)_{i\in I}}\circ g_a\mid (a,(b_i)_{i\in I})\in U\}\quad \& \quad Dom(\pi_{i,(b_i)_{i\in I}}\circ g_a)=[0,a]. \]
    We will show that
    \begin{equation}\label{9.6R2}
    \forall c\in \mI(M)\ \exists\, (a,(b_i)_{i\in I})\in U: c\leq a \quad \& \quad f_{i,c}=(\pi_{i,(b_i)_{i\in I}}\circ g_a)\wedge f_{i,c}(c).
    \end{equation}
Take $c\in\mI(M)$. For each $i\in I$, we set  $b_i=f_{i,c}(c)$ and $a=c$. Then $(a,(b_i)_{i\in I})\in U$ and $c\leq a$.
    For all $x\in [0,c]$,
    \[ (\pi_{i,(b_i)_{i\in I}}\circ g_a)(x)\wedge f_{i,c}(c)=f_{i,a}(x)\wedge f_{i,c}(x)=f_{i,c}(x). \]
    That is, $\pi_i \circ g\approx f_i$.
    Now, we show that $g$ is $\approx$-unique. Let $\{c_t\mid t\in K\}\s \mI(M)$,
    $h_t:[0,c_t]\ra [0,h_t(c_t)]$ be an $MV$-homomorphism for all $t\in K$, and let
    $h=\{h_t\mid t\in K\}:M\ra \prod_{i\in I}M_i$ be an $EMV$-morphism such that
    $\pi_{i}\circ h\approx f_i$ for all $i\in I$.
    We claim that $h\approx g$.
    It is enough to show that, for all $a\in \mI(M)$, there exists $t\in K$ such that $a\leq c_t$ and
    \[\forall x\in [0,a]\quad (f_{i,a}(x))_{i\in I}=g_a(x)=h_t(x)\wedge g_a(a)=h_t(x)\wedge (f_{i,a}(a))_{i\in I}.\]
    Put $a\in \mI(M)$ and let
    \begin{equation}\label{9.6R3}
    V=\{(t,(b_i)_{i\in I})\mid t\in K, (b_i)_{i\in I}\in \mI(\prod_{i\in I}M_i), h_t(c_t)\leq (b_i)_{i\in I}\}.
    \end{equation}
    Since $h$ is an $EMV$-morphism, there are $t_1,t_2\in K$ such that $a\leq c_{t_1}$ and $g_a(a)\leq h_{t_2}(c_{t_2})$.
    Put $t\in K$ such that $c_t\geq c_{t_1},c_{t_2}$ and $h_t(c_t)\geq h_{t_1}(c_{t_1}),h_{t_2}(c_{t_2})$, which implies that
    $h_t(c_t)\geq g_a(a)$ and $c_t\geq a$.
    Clearly, $h_t(c_t)\in \mI(\prod_{i\in I} M_i)$ (since $a\in [0,c_t]$ is an idempotent and $h_t:[0,c_t]\ra [0,h_t(c_t)]$ is an
    $MV$-homomorphism). By definition of $V$, we get that $(c_t,h_t(c_t))\in V$. Since $h_t(c_t)\geq g_a(a)=(f_{i,a}(a))_{i\in I}$, by
    Proposition \ref{eq.property}(i), from $f_i\approx \pi_i \circ h$ for all $i\in I$, it follows that
    \[ f_{i,a}(x)=(\pi_{i,h_t(a)}\circ h_t)(x) \wedge f_{i,a}(a),\quad \forall x\in [0,a],\ \forall i\in I.  \]
    Hence, $(f_{i,a}(x))_{i\in I}=h_t(x)\wedge (f_{i,a}(a))_{i\in I}$ for all $x\in [0,a]$. Therefore,
    $g_a(x)=h_t(x)\wedge g_a(a)$ for all $x\in [0,a]$. That is, $g\approx h$, see Proposition \ref{pr:f=g}. Whence, $\prod_{i\in I}M_i$ together with $\{\pi_i\mid i \in I\}$ is the $\approx$-product of the family $\{M_i\mid i \in I\}$ of $EMV$-algebras.

Now, consider the category $\mathbb{EMV}$. If for all $i\in I$,
$[f_i]:M\ra M_i$ is a morphism in the category $\mathbb{EMV}$, then by the above results
    there exists a  $\approx$-unique $EMV$-morphism $g:M\ra \prod_{i\in
I}M_i$ such that $\pi_i\circ g=f_i$, for all $i\in I$ and so
    $[\pi_i]\circ [g]=[f_i]$ for all $i\in I$. In addition, if $[h]=:M\ra
\prod_{i\in I}M_i$ is another morphism of $\mathbb{EMV}$ such that
    $[\pi_{i}]\circ [h]=[f_i]$ for all $i\in I$, then $\pi_{i}\circ
h\approx f_i$ for all $i\in I$. Thus by the above results we have
    $h\approx g$, that is $[h]=[g]$. Therefore, $\mathbb{EMV}$ is closed under products.
\end{proof}

\section{Free $EMV$-algebras and Weakly Free $EMV$-algebras}

It is well known that free objects can be defined in a concrete category. In particular, in the variety $\mathcal{EMV}$ of $EMV$-algebras with $EMV$-homomorphisms, free $EMV$-algebras exist for each set $X$. However, comparing of such free $EMV$-algebras with free $MV$-algebras is complicated. Therefore, we can say more using $EMV$-morphisms for definition of free objects. This is possible because, according to the results which were obtained in the previous sections, $\mathbb{EMV}$ is very similar to a
concrete category. It enables us to introduce an object which has many similarities with the free objects. In the section, we show that $EMV$-morphisms enable us to study also free $EMV$-algebras on a set $X$. We show that if $X$ is a finite set, then the free $MV$-algebra on $X$ is also a free $EMV$-algebra. To show how it is with infinite $X$, we introduce a so-called weakly free $EMV$-algebra. Then the free $MV$-algebra on $X$ is a weakly free $EMV$-algebra on $X$.

\begin{defn}\label{de:free}
Let $X$ be a set and $\tau:X\ra A$ be a map from $X$ to an $EMV$-algebra $A$.
We say that an $EMV$-algebra $A$ is a {\it free $EMV$-algebra} on $X$ if, for each $EMV$-algebra $M$ and for each map $f$ from $X$ into $M$, there is an $\approx$-unique $EMV$-morphism $\varphi =\{\varphi_i \mid i\in I\}:A\ra M$
with $\mathsf{e}(\varphi)=\{a_i\mid i\in I\}\subseteq \mathcal I(A)$ such that $\varphi\circ \tau=f$, which means that, for each $x\in X$ and each $i\in I$, $f(x)=\varphi_i(\tau(x))$. As usual, we denote the free $EMV$-algebra $A$ on $X$ by $A=F(X)$, and the set $X$ is the set of ``generators" of $F(X)$, see Lemma \ref{le:F(X)}.
\end{defn}

The basic properties of the free $EMV$-algebras are as follows.

\begin{prop}\label{pr:free}
Let $F(X)$ be a free $EMV$-algebra on a set $X$ and with a mapping $\tau$.
\begin{itemize}[nolistsep]
\item[{\rm (1)}] Let $M$ be an $EMV$-algebra and $f:X\to M$. If $\phi=\{\phi_i\mid i \in I\}$ with $\mathsf{e}=\{a_i\mid i \in I\}$ is an $EMV$-morphism from $F(X)$ into $M$, then $\tau(x)\le a_i$ for each $x \in X$ and each $i \in I$.

\item[{\rm (2)}] The mapping $\tau$ is injective.

\item[{\rm (3)}] If $F'(X)$ is another free $EMV$-algebra on $X$, then $F(X)$ and $F'(X)$ are $\approx$-isomorphic.

\item[{\rm (4)}] If $|X|=|X'|$, then $F(X)$ and $F(X')$ are $\approx$-isomorphic.
\end{itemize}
\end{prop}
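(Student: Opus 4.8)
The plan is to handle the four items in order, leaning only on the free property of Definition \ref{de:free}, the $\approx$-calculus of Propositions \ref{9.2.1} and \ref{pr:assoc}, and the identity criterion of Proposition \ref{equal to identity}. For (1), the reading compatible with Definition \ref{de:free} is that $\phi$ realizes $f$, i.e. $\phi\circ\tau=f$. That composition literally asserts $f(x)=\phi_i(\tau(x))$ for every $x\in X$ and every $i\in I$; since $\phi_i$ is an $MV$-homomorphism whose domain is $[0,a_i]$, the value $\phi_i(\tau(x))$ can exist only when $\tau(x)\in[0,a_i]$, that is $\tau(x)\le a_i$. Thus (1) simply records the well-definedness constraint hidden in Definition \ref{de:free}, namely that $\mathsf{e}(\phi)$ consists of idempotents dominating all the generators; this is what makes the identity available as a realizing morphism in (3).

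For (2) I would separate two generators by a test homomorphism. Assume $|X|\ge 2$ (otherwise $\tau$ is trivially injective) and suppose $\tau(x_1)=\tau(x_2)$ with $x_1\ne x_2$. Take $M$ to be the two-element $MV$-algebra, viewed as an $EMV$-algebra, and define $f:X\to M$ by $f(x_1)=1$ and $f(x)=0$ for $x\ne x_1$. By freeness there is an $EMV$-morphism $\varphi=\{\varphi_i\}$ with $\varphi\circ\tau=f$, so for an admissible $i$ we obtain $1=f(x_1)=\varphi_i(\tau(x_1))=\varphi_i(\tau(x_2))=f(x_2)=0$, a contradiction. Hence $\tau$ is injective.

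Item (3) is the usual uniqueness-of-free-objects argument performed up to $\approx$. Writing $\tau:X\to F(X)$ and $\tau':X\to F'(X)$ for the two structure maps, freeness of $F(X)$ applied to $\tau'$ gives an $\approx$-unique $\varphi:F(X)\to F'(X)$ with $\varphi\circ\tau=\tau'$, and freeness of $F'(X)$ applied to $\tau$ gives an $\approx$-unique $\psi:F'(X)\to F(X)$ with $\psi\circ\tau'=\tau$. Evaluating on generators and using $\approx$-associativity (Proposition \ref{pr:assoc}), the composite $\psi\circ\varphi$ sends each $x$ to $\psi_j(\varphi_i(\tau(x)))=\psi_j(\tau'(x))=\tau(x)$, so it realizes $\tau:X\to F(X)$. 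The identity $Id_{F(X)}$, represented by the family of restrictions to those idempotents dominating all generators (such a full family exists since freeness applied to $f=\tau$ already supplies one, cf. Remark \ref{I(M)} and (1)), also realizes $\tau$. By the $\approx$-uniqueness clause of Definition \ref{de:free}, $\psi\circ\varphi\approx Id_{F(X)}$, and symmetrically $\varphi\circ\psi\approx Id_{F'(X)}$; thus $\varphi$ is an $EMV$-$\approx$-isomorphism. Finally, (4) reduces to (3): fixing a bijection $\beta:X\to X'$, the algebra $F(X')$ equipped with $\tau'\circ\beta:X\to F(X')$ is again free on $X$, because for any $f:X\to M$ freeness of $F(X')$ applied to $f\circ\beta^{-1}$ yields the required $\approx$-unique morphism and $\approx$-uniqueness transports verbatim along $\beta$; then (3) gives $F(X)\approx F(X')$.

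I expect the delicate step to be the choice in (3) of a representative of $Id_{F(X)}$ whose idempotents all dominate the generators, so that $Id_{F(X)}\circ\tau=\tau$ is genuinely defined and $\approx$-uniqueness can be invoked. For an infinite generating set such common upper bounds need not exist in a general $EMV$-algebra, which is exactly the phenomenon that later forces the notion of a weakly free $EMV$-algebra; here it is licensed precisely because freeness applied to $f=\tau$ produces a full family of suitable idempotents via (1). Everything else is the routine $\approx$-bookkeeping guaranteed by Propositions \ref{9.2.1} and \ref{pr:assoc}.
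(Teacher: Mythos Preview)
Your proof is correct and follows essentially the same route as the paper's: (1) from the domain constraint of each $\phi_i$, (2) by separating two generators with a test map, and (3)--(4) by the standard back-and-forth using $\approx$-uniqueness. You are in fact more careful than the paper in (3): you explicitly argue that a representative of $Id_{F(X)}$ whose idempotents all dominate $\tau(X)$ exists (by applying (1) to the morphism realizing $f=\tau$), so that $Id_{F(X)}\circ\tau=\tau$ is genuinely defined in the sense of Definition~\ref{de:free}; the paper simply writes ``$\psi\circ\phi\circ\tau=\tau$ which gives $\psi\circ\phi\approx Id_{F(X)}$'' without addressing this point. Your reduction of (4) to (3) via the bijection $\beta$ is a mild streamlining of the paper's direct computation, with identical content.
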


\begin{proof}

(1) Since every $\phi_i:[0,a_i]\to [0,\phi_i(a_i)]$ is an $MV$-homomorphism such that $\phi_i(\tau(x))=f(x)$, we see that $\tau(x)\le a_i$.

(2) Assume that there are $x_1,x_2\in X$ such that $x_1\ne x_2$ and $\tau(x_1)=\tau(x_2)$. Let $M$ be an $EMV$-algebra with at least two elements and let $f:X \to M$ be a mapping such that $f(x_1)\ne f(x_2)$. Then there is an $\approx$-unique $EMV$-morphism $\phi=\{\phi_i\mid i \in I\}$ with $\mathsf{e}=\{a_i\mid i \in I\}$ such that $\phi\circ \tau =f$, that is
$\phi_i(\tau(x))=f(x)$, $x \in X$, $i \in I$. By (1), every $\tau(x)\le a_i$, so we have $f(x_1)=\phi_i(\tau(x_1))=\phi_i(\tau(x_2))=f(x_2)$ which contradicts $f(x_1)\ne f(x_2)$.

(3) Let $\tau:X \to F(X)$ and $\tau':X \to F'(X)$ be given. Then there are two $\approx$-unique $EMV$-morphisms $\phi: F(X)\to F'(X)$ and $\psi:F'(X)\to F(X)$ such that $\phi \circ \tau = \tau'$ and $\psi \circ \tau'= \tau$. Then $\phi\circ \psi \circ \tau=\tau'$ and $\psi \circ \phi \circ \tau = \tau$ which gives $\phi \circ \psi \approx Id_{F'(X)}$ and $\psi\circ \phi \approx Id_{F(X)}$, in other words $F(X)$ and $F'(X)$ are $\approx$-isomorphic.

(4) Let $\tau:X \to F(X)$ and $\tau':X'\to F(X')$ be injective mappings determining $F(X)$ and $F(X')$, respectively. Since $|X|=|X'|$, there is a bijective mapping $\beta:X\to X'$.
There are an $\approx$-unique $EMV$-morphism $\phi=\{\phi_i\mid i \in I\}:F(X)\to F(X')$ with $\mathsf e(\phi)=\{a_i\mid i\in I\}$ and an $\approx$-unique $EMV$-morphism $\psi=\{\psi_j\mid j \in J\}:F(X')\to F(X)$ with $\mathsf e(\psi)=\{b_j\mid j \in J\}$ such that  $\tau'\circ \beta=\phi \circ \tau$ and $\tau\circ \beta^{-1} = \psi \circ \tau'$.
Then $(\psi \circ \phi)\circ \tau = \psi \circ \tau' \circ \beta = \tau \circ \beta^{-1}\circ \beta = \tau$ which yields $\psi\circ \phi \approx Id_{F(X)}$. In the same way we prove $\phi \circ \psi \approx Id_{F(X')}$. Therefore, $F(X)$ and $F(X')$ are $\approx$-isomorphic.
\end{proof}

Consider the class $\mathcal{EMV}$ of $EMV$-algebras. To show a relationship between $F(X)$ and $X$, we introduce a stronger type of $EMV$-subalgebras, full subalgebras, which will entail that between $\tau(X)$ and $F(X)$, there is no other full subalgebra as $F(X)$, so we can speak that $X$ is a generator of $F(X)$.

 \begin{defn}\label{subalg}
A non-empty subset $X$ of an $EMV$-algebra $M$ is said to be a {\em full subalgebra} of $M$ if $X$ is closed under $\oplus$, $\vee$, $\wedge$ and $0$ and satisfies 	the following conditions:
 \vspace{2mm}
 \begin{itemize}[nolistsep]	
 	\item[{\rm (i)}] $X\cap\mI(M)$ is a full subset of $M$;
 	
 	\item[{\rm (ii)}]  for each $a\in X\cap\mI(M)$, $([0,a]\cap X;\oplus,\lam_a,0,a)$ is an $MV$-subalgebra of $([0,a];\oplus,\lam_a,0,a)$.
 \end{itemize}
 \end{defn}

Clearly, any full subalgebra of an $EMV$-algebra $M$ is an $EMV$-subalgebra of $M$. The converse is not true, in general. Indeed, the set $\{0\}$ is an $EMV$-subalgebra of any $EMV$-algebra $M$, but it is not a full subalgebra of $M$ whenever $M\ne \{0\}$. Moreover, if $X$ is a full subalgebra of an $EMV$-algebra $M$, then the inclusion map $i:X\ra M$ is an $EMV$-morphism. Moreover, the image of any $EMV$-morphism $f:M_1\ra M_2$ is a full subalgebra of $M_2$.

\begin{lem}\label{le:F(X)}
    Let $F(X)$ be a free $EMV$-algebra on a set $X$ with an embedding $\tau$. If $A$ is a full
subalgebra of $F(X)$ containing $\tau(X)$, then $A$ is equal to $F(X)$.
\end{lem}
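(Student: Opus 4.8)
The plan is to use the universal property of $F(X)$ twice: once with target $A$ to build an endomorphism of $F(X)$ whose values all land in $A$, and once with target $F(X)$ to force that endomorphism to be $\approx$-equivalent to the identity; then every element of $F(X)$ can be recovered inside $A$.

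First I would record two facts. Since $A$ is a full subalgebra of $F(X)$, the inclusion $\iota\colon A\ra F(X)$ is an $EMV$-morphism (as noted just before Definition~\ref{subalg}); write $\iota=\{\iota_b\mid b\in A\cap\mI(F(X))\}$ with each $\iota_b\colon [0,b]\cap A\to[0,b]$ the set inclusion. Since $\tau(X)\s A$, the embedding $\tau$ corestricts to a map $\tau_A\colon X\ra A$. Applying the universal property (Definition~\ref{de:free}) to the $EMV$-algebra $M=A$ and the map $f=\tau_A$ yields an $\approx$-unique $EMV$-morphism $\varphi=\{\varphi_i\mid i\in I\}\colon F(X)\ra A$ with $\mathsf e(\varphi)=\{a_i\mid i\in I\}$ such that $\varphi_i(\tau(x))=\tau(x)$ for all $x\in X$ and all $i\in I$. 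By Proposition~\ref{pr:free}(1) every $a_i\ge\tau(x)$, and since $\varphi$ maps into $A$ we have $\im(\varphi)\s A$. Composing with $\iota$ gives the $EMV$-morphism $\iota\circ\varphi\colon F(X)\ra F(X)$ (Proposition~\ref{9.2}), all of whose component values lie in $A$, and for each generator $(\iota\circ\varphi)_{(i,b)}(\tau(x))=\iota_b(\varphi_i(\tau(x)))=\tau(x)$; hence $(\iota\circ\varphi)\circ\tau=\tau$.

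The one delicate point, which I expect to be the main obstacle, is the uniqueness step. A morphism realizing $f=\tau$ into $F(X)$ must, by Proposition~\ref{pr:free}(1), have all of its idempotents above every $\tau(x)$, so the full identity $Id_{F(X)}$ (whose index set is all of $\mI(F(X))$, including small idempotents) does not literally satisfy the universal condition. I would therefore work with the restricted identity $Id'=\{Id_{[0,a_i]}\mid i\in I\}$ indexed by $\mathsf e(\varphi)$, which is a full subset of $\mI(F(X))$ whose members all dominate every $\tau(x)$. Both $\iota\circ\varphi$ and $Id'$ satisfy the universal condition $(\,\cdot\,)\circ\tau=\tau$, so the $\approx$-uniqueness clause gives $\iota\circ\varphi\approx Id'$; moreover $Id'\approx Id_{F(X)}$ by Proposition~\ref{eq.property}(iii), since $\mathsf e(\varphi)$ is full. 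Transitivity of $\approx$ then yields $\iota\circ\varphi\approx Id_{F(X)}$.

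Finally I would extract $A=F(X)$ from this similarity. Let $z\in F(X)$. Because $A\cap\mI(F(X))$ is a full subset of $F(X)$ (Definition~\ref{subalg}(i)), there is an idempotent $a'\in A\cap\mI(F(X))$ with $z\le a'$. Reading $Id_{F(X)}\approx\iota\circ\varphi$ (Definition~\ref{hhhh}) at the index $a'$, there is a component $\iota_b\circ\varphi_i$ of $\iota\circ\varphi$ with $a'\le a_i$ and $x=(\iota_b\circ\varphi_i)(x)\wedge a'=\varphi_i(x)\wedge a'$ for all $x\in[0,a']$. Taking $x=z$ gives $z=\varphi_i(z)\wedge a'$, where $\varphi_i(z)\in\im(\varphi)\s A$ and $a'\in A$; as $A$ is closed under $\wedge$, we conclude $z\in A$. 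Since $z$ was arbitrary, $A=F(X)$.
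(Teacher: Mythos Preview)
Your proof is correct and follows essentially the same approach as the paper's: apply the universal property with target $A$ to get $\varphi$, compose with the inclusion $\iota$, use $\approx$-uniqueness against the identity to obtain $\iota\circ\varphi\approx Id_{F(X)}$, and then extract $A=F(X)$ from this similarity. Your treatment of the ``delicate point'' (passing through the restricted identity $Id'$ before invoking Proposition~\ref{eq.property}(iii)) is in fact more careful than the paper, which simply asserts $\psi\approx Id_{F(X)}$ without comment; and your final extraction via Definition~\ref{hhhh} and closure of $A$ under $\wedge$ is a minor variant of the paper's use of Proposition~\ref{equal to identity} to get $v=\phi_i(v)\in A$ directly.
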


\begin{proof}
Consider the diagram given by Figure 1.

Let $\tau: X\to F(X)$ and $\tau': X \to A$ be such mappings that $\tau(x)=\tau'(x)$ for each $x \in X$. Let $\mu:A \to F(X)$ be the natural embedding, i.e. $\mu(y)=y$ for each $y \in A$.
Since $A$ is a full subalgebra of $F(X)$, the inclusion map $\mu:A\ra
F(X)$ can be viewed as an $EMV$-morphism $\mu =\{\mu_a\mid a \in \mathcal I(A)\}$, where $\mu_a=\mu|_{[0,a]}$ for each $a\in \mathcal I(A)$.
The definition of $F(X)$ entails that there is an $\approx$-unique $EMV$-morphism $\phi=\{\phi_i\mid i \in I\}: F(X)\to A$ with $\mathsf e(\phi)=\{a_i \mid i \in I\}$ such that $\phi \circ \tau = \tau'$.

On the other hand, there is an $\approx$-unique $EMV$-homomorphism $\psi=\{\psi_j\mid j \in J\}: F(X)\to F(X)$ with $\mathsf e(\psi) =\{b_j \mid j \in J\}$ such that $\psi \circ \tau = \tau$. Since $\tau = \mu\circ \tau' = \mu\circ \phi \circ \tau$, we have $\mu\circ \phi \approx \psi \approx Id_{F(X)}$.
Set $U:=\{(i,a)\in I\times \mI(A)\mid \phi_i(a_i)\leq a\}$. By
definition, $\mu\circ\phi=\{\mu_a\circ \phi_i\mid (i,a)\in U\}$.
Let $v$ be an arbitrary element of $F(X)$. Then there exists $i\in
I$ such that $v\leq a_i$ and $v\leq \phi_i(a_i)$.  Since $\mu\circ\phi\approx Id_{F(X)}$, by Proposition \ref{equal to identity}, for each $a\in\mI(A)$ with $\phi_i(a_i)\leq a$, we have $\mu_a\circ \phi_i(v)=v$. That is, $v=\phi_i(v)\in A$. Therefore, $A=F(X)$.

\setlength{\unitlength}{1mm}
\begin{figure}[!ht]
	\begin{center}
		\begin{picture}(40,20)
		\put(10,17){\vector(2,0){14}}
        \put(8,14){\vector(0,-1){12}}
\put(10,-1){\vector(2,0){14}}
\put(27,15){\vector(0,-2){13}}
\put(10,15){\vector(1,-1){14}}
        \put(5,-2){\makebox(4,2){{ $A$}}}
		\put(5,16){\makebox(4,2){{ $X$}}}
		\put(27,16){\makebox(4,2){{ $F(X)$}}}
		\put(15,19){\makebox(4,2){{ $\tau$}}}
		\put(1,7){\makebox(4,2){{$\tau'$}}}
\put(28,7){\makebox(4,2){{$\psi$}}}
\put(22,9){\makebox(4,2){{$\phi$}}}
\put(10,9){\makebox(4,2){{$\tau$}}}
\put(16,0){\makebox(4,2){{$\mu$}}}
		\put(25,14){\vector(-1,-1){13}}
        \put(27,-2){\makebox(4,2){{ $F(X)$}}}
		\end{picture}
		\caption{ }
	\end{center}
\end{figure}
\end{proof}

Therefore, we can say that $X$ is a generator of $F(X)$.


In the next theorem we show that the free $EMV$-algebra and the free $MV$-algebra on a finite set $X$
coincide.

\begin{thm}\label{9.7}
	Let $X=\{x_1,x_2,\ldots ,x_n\}$ be an arbitrary finite set and $F(X)$ be the free $MV$-algebra on $X$. Then $F(X)$ is the free
	$EMV$-algebra on $X$.
\end{thm}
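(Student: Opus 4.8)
The plan is to exploit the fact that a free $MV$-algebra on a \emph{finite} generating set has only trivial idempotents, which collapses the notion of an $EMV$-morphism out of $F(X)$ to a coherent family of ordinary $MV$-homomorphisms defined on all of $F(X)$. First I would record the structural observation that $\mathcal I(F(X))=\{0,1\}$: the free $MV$-algebra on $n$ generators is the algebra of McNaughton functions on the connected cube $[0,1]^n$ \cite{mundici 1}, and an idempotent (Boolean) element is a $\{0,1\}$-valued continuous function on $[0,1]^n$, hence constant. Since the generators $\tau(x_k)$ are non-zero, Proposition \ref{pr:free}(1) shows that for any candidate $EMV$-morphism $\varphi=\{\varphi_i\mid i\in I\}:F(X)\ra M$ with $\mathsf e(\varphi)\subseteq\mathcal I(F(X))$ one has $\tau(x_k)\le a_i$ for every $i$, forcing $a_i=1$. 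Thus $\mathsf e(\varphi)=\{1\}$ and every block $\varphi_i$ is an $MV$-homomorphism defined on the whole of $F(X)$ with codomain $[0,\varphi_i(1)]$.

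For existence, the finiteness of $X$ enters decisively. Given $f:X\ra M$, condition $(EMV4)$ places each $f(x_k)$ under an idempotent, and the $\oplus$-sum of these finitely many idempotents is a single idempotent dominating all $f(x_k)$; hence $S:=\{c\in\mathcal I(M)\mid f(x_k)\le c,\ 1\le k\le n\}$ is non-empty. It is $\oplus$-directed, and since $e\oplus c\in S$ for every idempotent $e$ and any fixed $c\in S$, it is a full subset of $M$. For each $c\in S$ the map $f$ takes values in the $MV$-algebra $[0,c]$, so the universal property of the free $MV$-algebra \cite{mundici 1} yields a unique $MV$-homomorphism $\varphi_c:F(X)\ra[0,c]$ with $\varphi_c\circ\tau=f$. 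I then set $\varphi=\{\varphi_c\mid c\in S\}$ and verify Definition \ref{9.1}: (i) and (ii) hold because $\mathsf e(\varphi)=\{1\}$ is full in $\mathcal I(F(X))$ and $\{\varphi_c(1)\mid c\in S\}=S$ is full in $M$, while (iv) is the directedness of $S$. The only delicate point is the compatibility condition (iii): when $c\le c'$ in $S$, the element $c$ is idempotent (Boolean) in the $MV$-algebra $[0,c']$, so $z\mapsto z\wedge c$ is the canonical $MV$-projection $[0,c']\ra[0,c]$ onto the direct factor it determines (standard $MV$-theory, see \cite{mundici 1}); hence $u\mapsto\varphi_{c'}(u)\wedge c$ is an $MV$-homomorphism $F(X)\ra[0,c]$ agreeing with $f$ on $\tau(X)$, and by uniqueness of the free extension it equals $\varphi_c$. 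This is exactly (iii), and the construction is the coherent version of Example \ref{ex:MV}(5).

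For $\approx$-uniqueness, let $\psi=\{\psi_j\mid j\in J\}:F(X)\ra M$ be any $EMV$-morphism with $\mathsf e(\psi)\subseteq\mathcal I(F(X))$ and $\psi\circ\tau=f$; by the first paragraph its relevant blocks all have domain-top $1$, so $\psi_j(\tau(x_k))=f(x_k)$. To obtain $\varphi\approx\psi$ in the sense of Definition \ref{hhhh}, I fix $c\in S$ and use that $\{\psi_j(a_j)\}$ is full in $M$ together with the directedness (iv) of $\psi$ to produce an index $j$ with $a_j=1$ and $\psi_j(1)\ge c$. Then $u\mapsto\psi_j(u)\wedge c$ is once more an $MV$-homomorphism $F(X)\ra[0,c]$ sending each $\tau(x_k)$ to $f(x_k)$, so by the uniqueness of the free extension it coincides with $\varphi_c$; that is, $\varphi_c(u)=\psi_j(u)\wedge\varphi_c(1)$ for all $u$, which is precisely the defining relation for $\varphi\approx\psi$. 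Since $\approx$ is an equivalence (Proposition \ref{eq.prop}), $\varphi$ is $\approx$-unique, so $F(X)$ meets Definition \ref{de:free} and is the free $EMV$-algebra on $X$.

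I expect the main obstacle to be not any single computation but the structural tension that an $MV$-homomorphism must send $1$ to the top of its codomain, so one single $MV$-homomorphism into a fixed $[0,c]$ can never satisfy the fullness requirement (ii) on codomains by itself. The remedy is to replace it by the whole directed family $\{\varphi_c\mid c\in S\}$ indexed by the idempotents above the images of the generators and glued together by the meet-projections $z\mapsto z\wedge c$; checking that these projections are genuine $MV$-homomorphisms and interact correctly with the uniqueness of the free extension is the heart of the argument, both for establishing (iii) and for matching an arbitrary $\psi$ block-by-block in the uniqueness step.
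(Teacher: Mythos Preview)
Your proof is correct and follows essentially the same construction as the paper for existence: both index the $EMV$-morphism $\varphi$ by the set $S=J=\{c\in\mathcal I(M)\mid f(x_k)\le c\ \forall k\}$, obtain each $\varphi_c$ from the universal property of the free $MV$-algebra, and verify condition~(iii) by showing that $u\mapsto\varphi_{c'}(u)\wedge c$ is an $MV$-homomorphism agreeing with $f$ on generators (you invoke the Boolean direct-factor projection, the paper computes it directly via Proposition~\ref{3.5}).

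The one genuine difference is in the uniqueness step. You use the structural fact $\mathcal I(F(X))=\{0,1\}$ (via the McNaughton representation and connectedness of $[0,1]^n$) to conclude immediately that \emph{every} block of a competing $\psi$ has domain-top $1$. The paper avoids this external input: it only uses that $\mathsf e(\psi)$ is full in $F(X)$ to find \emph{some} block with domain-top $1$, then passes to the subfamily $K'=\{i\mid a_i=1\}$ and shows $\{h_i\mid i\in K'\}\approx h$ before comparing with $\phi$. Your route is shorter and more conceptual; the paper's is self-contained within the $EMV$ framework and would survive in contexts where one does not know the idempotent structure of the domain.

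One small wording issue: you cite Proposition~\ref{pr:free}(1) to get $\tau(x_k)\le a_i$, but that proposition is stated under the hypothesis that $F(X)$ is already a free $EMV$-algebra, which is what you are proving. The content you need is immediate from the definition of $\varphi\circ\tau=f$ (namely that $\varphi_i(\tau(x))$ is defined, hence $\tau(x)\in[0,a_i]$), so just say that directly rather than invoking the proposition.
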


\begin{proof}
Let $M$ be an $EMV$-algebra and $f:X\ra M$ be a map.
Consider the diagram given by Figure 2.

\setlength{\unitlength}{1mm}
\begin{figure}[!ht]
	\begin{center}
		\begin{picture}(40,20)
		\put(12,17){\vector(2,0){12}} \put(8,14){\vector(0,-1){12}}
		\put(5,-2){\makebox(4,2){{ $M$}}}
		\put(5,16){\makebox(4,2){{ $X$}}}
		\put(27,16){\makebox(4,2){{ $F(X)$}}}
		\put(15,19){\makebox(4,2){{ $\tau$}}}
		\put(1,7){\makebox(4,2){{$f$}}}
		\put(25,14) {\line(-1,-1){4}}
		\put(20,9) {\line(-1,-1){4}}
		\put(15,4) {\vector(-1,-1){4}}
		
		\put(20,6){\makebox(4,2){{ $\phi$}}}
		\end{picture}
		\caption{}
	\end{center}
\end{figure}
Set $J=\{a\in\mI(M)\mid f(x)\leq a,\forall x\in X\}$.
For all $a\in J$, the mapping $f$ maps $X$ into the interval $[0,a]$.
Since $([0,a];\oplus,\lam_a,0,a)$ is an
$MV$-algebra, there is a unique $MV$-homomorphism $\phi_a:F(X)\ra M$ such that
$(\phi_a\circ \tau)(x)=f(x)$ for all $x\in X$. Now, we claim that $\phi=\{\phi_a\mid a\in J\}$ is an $EMV$-morphism.

(i) Let $0=\min(F(X))$ and $1=\max(F(X))$. Clearly, $\{1\}$ is a full subset of $F(X)$.  For each $a\in J$,
$\phi_a(1)=a$. Clearly, $\{\phi_a(1)\mid a\in J\}$ is a full subset of $M$.

(ii) Let $a,b\in J$ such that $a\leq b$. Then $\phi_b \wedge a:F(X)\ra [0,a]$ defined by $(\phi_b\wedge a)(z)=(\phi_b(z)\wedge a)$, $z \in F(X)$, is an $MV$-homomorphism. Indeed,
for each $z,w\in F(X)$, by Proposition \ref{3.5}, we have
\[ (\phi_b \wedge a)(z')=\phi_b(z')\wedge a=\lam_b(\phi_b(z))\wedge a=\lam_a(\phi_b(z)),
\]
where $z'\in F(X)$ is the negation of the element $z\in F(X)$,
and
\begin{eqnarray*}
(\phi_b(z)\wedge a)\oplus (\phi_b(w)\wedge a)&=&\big((\phi_b(z)\wedge a)\oplus \phi_b(w)\big)\wedge
\big((\phi_b(z)\wedge a)\oplus a\big) \\
&=& (\phi_b(z)\oplus \phi_b(w))\wedge (a\oplus \phi_b(w))\wedge a\\
&=& (\phi_b(z)\oplus \phi_b(w))\wedge a.
\end{eqnarray*}
From $(\phi_b \wedge a)(\tau(x))=\phi_b(\tau(x))\wedge a=f(x)\wedge a=f(x)$ for all $x\in X$, it follows that
$\phi_b \wedge a=\phi_a$.

(iii) Let $a,b\in J$. Then there is $c\in J$ such that $a,b\leq c$. Clearly, $\phi_a(1),\phi_b(1)\leq \phi_c(1)$.

(iv) Since $\mathsf{e}(\phi)=\{1\}$, for each $x\in X$ we have $\tau(x)\in
Dom(\phi_a)$ for all $a\in J$. Let $a\in J$.
By definition of $\phi_a$, for each $x\in X$, (1) $f(x)\leq a$, (2)
$\phi_a(\tau(x))=f(x)$.
It follows that $\phi_a\circ \tau=f$ for all $a\in J$.

(i)--(iv) imply that $\phi$ is an $EMV$-morphism and the diagram given by
Figure 2 commutes.
Now, let $h=\{h_i\mid i\in K\}:F(X)\ra M$ be an $EMV$-morphism such that
$\mathsf{e}(h)=\{a_i\mid i\in K\}$, $h_i:[0,a_i]\to[0,h_i(a_i)]$, and
$(h_i\circ \tau)(x)=f(x)$ for all $x\in X$ and for all $i\in K$.
Since $\mathsf{e}(h)$ is a full subset of $F(X)$,
there exists $i\in K$ such that $1\leq a_i$ and so $1=a_i$. Set $K'=\{i\in
K\mid a_i=1\}$.

(v) It is easy to see that $h':=\{h_i\mid i\in K'\}:F(X)\ra M$ is an
$EMV$-morphism.

(vi) Let $i\in K$ and $j\in K'$. Since $h$ is an $EMV$-morphism, there is
$t\in K$ such that
$a_i,a_j\leq a_t$ and $h_i(a_i),h_j(a_j)\leq h_t(a_t)$. Hence, $t\in K'$ and
$h_i(x)=h_t(x)\wedge h_i(a_i)$ for all $x\in [0,a_i]$, which entails
$h\approx h'$. Thus, by Proposition \ref{9.3.1}, without loss of generality
we can assume that $\mathsf{e}(h)=\{1\}$. Now, we show that $h\approx \phi$. Put
$i\in K$. By definition,
there exists $b\in J$ such that $h_i(1)=h_i(a_i)\leq b$.  Since $h\circ
\tau=f$, then $h_i\circ \tau(x)=f(x)$ for all $x\in X$ and for all $i\in I$.
Moreover, the diagram given by Figure 3 commutes.
     \setlength{\unitlength}{1mm}
\begin{figure}[!ht]
    \begin{center}
        \begin{picture}(40,20)
        \put(12,17){\vector(2,0){12}}
        \put(8,14){\vector(0,-1){12}}
        \put(5,-2){\makebox(4,2){{ $[0,h_i(1)]$}}}
        \put(5,16){\makebox(4,2){{ $X$}}}
        \put(27,16){\makebox(4,2){{ $F(X)$}}}
        \put(15,19){\makebox(4,2){{ $\tau$}}}
        \put(1,7){\makebox(4,2){{$f$}}}
        \put(27,14){\vector(-1,-1){12}}
        \put(23,6){\makebox(4,2){{ $h_i$}}}
        \end{picture}
        \caption{ }
    \end{center}
\end{figure}

On the other hand, given $i\in K$, there is $b \in J$ such that $\phi_b:F(X)\ra [0,b]$ is an $MV$-homomorphism with
$\phi_b \circ \tau=f$ and $h_i(1)\leq b$. We can easily prove
that the map $\phi_b \wedge h_i(1):F(X)\ra [0,h_i(1)]$ sending $z\in F(X)$
to $\phi(z)\wedge h_i(1)$ is another $MV$-homomorphism
commuting the last diagram. Since $F(X)$ is the free $MV$-algebra on $X$,
then we have $h_i=\phi_b\wedge h_i(1)$, that is
$h_i(z)=\phi_b(z)\wedge h_i(1)$ for all $z\in F(X)$. Thus, $h\approx
\phi$. Therefore, $F(X)$ is the free $EMV$-algebra on $X$.
\end{proof}

We note that according to \cite[Thm 9.1.5]{mundici 1}, the free $MV$-algebra over $n$ generators is given by McNaughton functions $[0,1]^n\to [0,1]$, i.e. continuous functions $f:[0,1]^n\to [0,1]$ that are piece-vise linear with integer coefficients.

Let $M_1$ and $M_2$ be $EMV$-algebras, $X$ be a set and $\tau:X\ra M_1$ and
$f:X\ra M_2$ be two maps.
Consider an $EMV$-morphism $\varphi=\{\varphi_i\mid i\in I\}:M_1\ra M_2$. We know that
$\varphi\circ \tau$  is not defined in a usual way
(since $\varphi$ is not a map). Also, we need a new definition for equality
between $\varphi\circ \tau$ and $f$.
So, in Theorem \ref{9.7}, we used the following definition:
\[\varphi\circ \tau=f \Leftrightarrow (1)\ \ Im(\tau)\s Dom(\varphi_i),\ \forall i\in
I,\quad (2)\ \ \varphi_i\circ \tau(x)=f(x),\ \forall i\in I\ \forall x\in X.\]
We think that this definition is too strong. We can introduce a new version
based on properties of the components of $\varphi$ as follows:

In Proposition \ref{eq.property}, we have established some properties of
$EMV$-morphisms.
It was shown that there exists a tight  connection between the components
of an $EMV$-morphism $\varphi=\{\varphi_i\mid i\in I\}:M_1\ra M_2$.
So, we can use it to change the conditions under which $\varphi\circ \tau$ is ``similar" to $f$. That is, for each $x\in X$ and each $i\in I$ such that
$\tau(x)\leq a_i$, we have $f(x)\wedge \varphi_i(a_i)=\varphi_i(\tau(x))$.
(Indeed, for each $i\in I$, if $f(x)\leq \varphi_i(a_i)$, then
$f(x)=\varphi_i(\tau(x))$). In this case we write
$\varphi\circ \tau\sim f$.  Clearly, if $\varphi\circ \tau = f$, then $\varphi\circ \tau\sim f$.

Now, we use $\sim$ to introduce a new concept called a weakly free
$EMV$-algebra in the class of $EMV$-algebras when in Definition \ref{de:free} of the free $EMV$-algebra we change $f=\varphi\circ \tau$ by $f\sim \varphi\circ \tau$:

\begin{defn}\label{de:wfree}

Let $X\ne \emptyset$ be a set and $\tau:X\ra A$ be a map from $X$ to an $EMV$-algebra $A$. We say that an $EMV$-algebra $A$ is the {\it weakly free $EMV$-algebra} on $X$ if, for each $EMV$-algebra $M$ and for each map $f$ from $X$ into $M$, there is an $\approx$-unique $EMV$-morphism $\varphi =\{\varphi_i \mid i\in I\}:A\ra M$
with $\mathsf{e}(\varphi)=\{a_i\mid i\in I\}\subseteq \mathcal I(A)$ such that $\varphi\circ \tau\sim f$, which means that, for each $x\in X$ and each $i\in I$ such that
$\tau(x)\leq a_i$, we have $f(x)\wedge \varphi_i(a_i)=\varphi_i(\tau(x))$. The weakly free $EMV$-algebra on $X$ is denoted by $F_w(X)$.
\end{defn}

\begin{lem}\label{le:wfree}
Let $F_w(X)$ be a weakly free $EMV$-algebra on $X$ with a mapping $\tau$. Then $\tau$ is an injective mapping.
\end{lem}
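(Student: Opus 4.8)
The plan is to mimic the injectivity argument for the genuine free $EMV$-algebra, namely Proposition~\ref{pr:free}(2), while compensating for the fact that the weak version supplies only the relation $\varphi\circ\tau\sim f$ rather than the strict equality $\varphi\circ\tau=f$. I argue by contradiction. Suppose there are distinct $x_1,x_2\in X$ with $\tau(x_1)=\tau(x_2)$. Let $M$ be an $EMV$-algebra with at least two elements (for instance the two-element Boolean algebra, which is an $EMV$-algebra) and choose $f:X\to M$ with $f(x_1)\ne f(x_2)$. By the defining property of $F_w(X)$ there is an $EMV$-morphism $\varphi=\{\varphi_i\mid i\in I\}:F_w(X)\to M$ with $\mathsf e(\varphi)=\{a_i\mid i\in I\}$ such that $\varphi\circ\tau\sim f$; that is, whenever $\tau(x)\le a_i$ one has $f(x)\wedge\varphi_i(a_i)=\varphi_i(\tau(x))$.

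The essential point, and the step that differs from the free case, is the meet with $\varphi_i(a_i)$ appearing in the relation $\sim$: to recover $f(x_\ell)$ itself I must locate a single index $t\in I$ at which $\tau(x_1)\le a_t$ and, simultaneously, $f(x_1),f(x_2)\le\varphi_t(a_t)$, so that the meet collapses to an equality $f(x_\ell)\wedge\varphi_t(a_t)=f(x_\ell)$. I would build such a $t$ from the fullness and directedness axioms of Definition~\ref{9.1}. First, since $\mathsf e(\varphi)$ is a full subset of $\mathcal I(F_w(X))$, there is $i_0\in I$ with $\tau(x_1)=\tau(x_2)\le a_{i_0}$. Next, since $\{\varphi_i(a_i)\mid i\in I\}$ is a full subset of $M$ (condition (ii) of Definition~\ref{9.1}), there are $j_0,j_1\in I$ with $f(x_1)\le\varphi_{j_0}(a_{j_0})$ and $f(x_2)\le\varphi_{j_1}(a_{j_1})$. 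Applying condition~(iv) of Definition~\ref{9.1} twice to merge the three indices $i_0,j_0,j_1$, I obtain $t\in I$ with $a_{i_0}\le a_t$ and $\varphi_{j_0}(a_{j_0}),\varphi_{j_1}(a_{j_1})\le\varphi_t(a_t)$, whence $\tau(x_1)\le a_t$ and $f(x_1),f(x_2)\le\varphi_t(a_t)$.

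With this $t$ the relation $\sim$ is applicable at the index $t$, because $\tau(x_1)=\tau(x_2)\le a_t$, and it yields the chain $f(x_1)=f(x_1)\wedge\varphi_t(a_t)=\varphi_t(\tau(x_1))=\varphi_t(\tau(x_2))=f(x_2)\wedge\varphi_t(a_t)=f(x_2)$, contradicting $f(x_1)\ne f(x_2)$; hence $\tau$ is injective. I expect the only genuine obstacle to be the middle step: in the free case one simply invokes $\varphi_i(\tau(x))=f(x)$ for every admissible index (Proposition~\ref{pr:free}(1)), whereas here the meet $\wedge\,\varphi_i(a_i)$ must be absorbed, and this forces the simultaneous use of \emph{both} fullness conditions together with the directedness axiom~(iv) to produce one index that dominates $\tau(x_1)$ on the source side and $f(x_1),f(x_2)$ on the target side at once.
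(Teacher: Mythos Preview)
Your argument is correct and follows essentially the same route as the paper: a contradiction via an $EMV$-algebra $M$ with $f(x_1)\ne f(x_2)$, then use fullness of $\mathsf e(\varphi)$ and of $\{\varphi_i(a_i)\}$ together with axiom~(iv) of Definition~\ref{9.1} to produce a single index $t$ at which $\tau(x_1)\le a_t$ and $f(x_1),f(x_2)\le\varphi_t(a_t)$, so that the meets in the relation $\sim$ collapse. The only cosmetic difference is that the paper picks one index $j$ with $f(x_1),f(x_2)\le\varphi_j(a_j)$ in a single step (implicitly via an idempotent of $M$ dominating both values), whereas you pick $j_0,j_1$ separately and then merge three indices; both are fine.
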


\begin{proof}
Suppose the converse, i.e. there are two different points $x_1,x_2\in X$ such that $\tau(x_1)=\tau(x_2)$. Let $M$ be an $EMV$-algebra having at least two elements and let $f:X\to M$ be a mapping such that $f(x_1)\ne f(x_2)$. There is an $EMV$-morphism $\phi=\{\phi_i\mid i \in I\}:F_w(X) \to M$ with $\mathsf e(\phi)=\{a_i\mid i \in I\}$ such that if $x\in X$ with $\tau(x)\le a_i$, then $f(x)\wedge \phi_i(a_i)=\phi_i(\tau(x))$.

Since $\{a_i\mid i \in I\}$ is full in $F_w(X)$, there is $i\in I$ such that $\tau(x_1)=\tau(x_2)\le a_i$. Similarly, since $\{\phi_i(a_i)\mid i \in I\}$ is full in $M$, there is $j\in I$ such that $f(x_1),f(x_2)\le \phi_j(a_j)$. Finally, by (iv) of Definition \ref{9.1}, there is $k\in I$ such that $a_i,a_j\le a_k$ and $\phi_i(a_i),\phi_j(a_j)\le \phi_k(a_k)$. Hence,
$f(x_1)=f(x_1)\wedge \phi_k(a_k)=\phi_k(\tau(x_1))= \phi_k(\tau(x_2))= f(x_2)\wedge \phi_k(a_k)=f(x_2)$ which is a contradiction.
\end{proof}

 \begin{thm}\label{weak free}
 Let $F(X)$ be a free MV-algebra on a set $X\ne \emptyset$. Then $F(X)$ is a weakly free $EMV$-algebra on $X$.
 \end{thm}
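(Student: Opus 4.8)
The plan is to imitate the construction in the proof of Theorem~\ref{9.7}, but to replace the strong requirement $\varphi\circ\tau=f$ by its weaker similarity version $\varphi\circ\tau\sim f$, which is exactly what becomes necessary once $X$ is infinite. Let $\tau\colon X\to F(X)$ be the canonical insertion of the free generators, so that $\tau(X)$ generates $F(X)$ as an $MV$-algebra and every $MV$-homomorphism out of $F(X)$ is determined by its values on $\tau(X)$. Recall that $F(X)$ has a top element $1$, which is its greatest idempotent, so $\{1\}$ is a full subset of $\mI(F(X))$. The guiding observation is that for infinite $X$ there need be no single idempotent $b\in\mI(M)$ with $f(x)\le b$ for all $x\in X$; hence instead of extending $f$ itself, I shall extend each of its truncations $x\mapsto f(x)\wedge b$.

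First I would fix an arbitrary $EMV$-algebra $M$ and a map $f\colon X\to M$, and for every $b\in\mI(M)$ define $\varphi_b\colon F(X)\to[0,b]$ to be the unique $MV$-homomorphism with $\varphi_b(\tau(x))=f(x)\wedge b$ for all $x\in X$; this exists and is unique because $x\mapsto f(x)\wedge b$ maps $X$ into the $MV$-algebra $[0,b]$ and $F(X)$ is free. Put $\varphi=\{\varphi_b\mid b\in\mI(M)\}$, so that $\mathsf e(\varphi)=\{1\}$ and $\varphi_b(1)=b$ for each $b$. Conditions (i) and (ii) of Definition~\ref{9.1} are then immediate, since $\{1\}$ is full in $\mI(F(X))$ and $\{\varphi_b(1)\mid b\in\mI(M)\}=\mI(M)$ is full in $M$, while (iv) holds by taking $b''=b\oplus b'$. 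The only condition needing work is (iii): for $b\le b'$ I must show $\varphi_b(z)=\varphi_{b'}(z)\wedge b$ for all $z\in F(X)$. As in the proof of Theorem~\ref{9.7}(ii) (via Proposition~\ref{3.5}) the map $z\mapsto\varphi_{b'}(z)\wedge b$ is an $MV$-homomorphism $F(X)\to[0,b]$, and on a generator $\tau(x)$ it takes the value $(f(x)\wedge b')\wedge b=f(x)\wedge b=\varphi_b(\tau(x))$; since the two $MV$-homomorphisms agree on the generating set $\tau(X)$, they coincide. Thus $\varphi$ is an $EMV$-morphism, and $\varphi\circ\tau\sim f$ holds by construction, because for each $x$ and each $b$ we have $f(x)\wedge\varphi_b(1)=f(x)\wedge b=\varphi_b(\tau(x))$.

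It then remains to establish $\approx$-uniqueness. Let $\psi=\{\psi_j\mid j\in J\}\colon F(X)\to M$ be another $EMV$-morphism with $\psi\circ\tau\sim f$ and $\mathsf e(\psi)\subseteq\mI(F(X))$. Exactly as in Theorem~\ref{9.7}(v)--(vi), using that $1$ is the top of $F(X)$ (so that any idempotent above $1$ equals $1$), the components with $c_j=1$ already have full codomains and form an $EMV$-morphism similar to $\psi$ (Proposition~\ref{9.3.1}); hence I may assume every relevant $\psi_j$ is an $MV$-homomorphism $F(X)\to[0,\psi_j(1)]$ with $\psi_j(\tau(x))=f(x)\wedge\psi_j(1)$. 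By the uniqueness in the universal property of $F(X)$ this forces $\psi_j=\varphi_{\psi_j(1)}$. To conclude $\varphi\approx\psi$ through Definition~\ref{hhhh}, given an index $b$ of $\varphi$ I would use that $\{\psi_j(1)\mid j\in J\}$ is full in $M$ to choose $j$ with $b\le\psi_j(1)$; then $1\le 1$, and by condition (iii) established above, $\psi_j(z)\wedge b=\varphi_{\psi_j(1)}(z)\wedge b=\varphi_b(z)$ for all $z$, which is precisely the required relation $\varphi_b(z)=\psi_j(z)\wedge\varphi_b(1)$. Since $\approx$ is an equivalence (Proposition~\ref{eq.prop}), this yields $\psi\approx\varphi$, and $F(X)$ is weakly free on $X$.

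The conceptual heart of the argument, and the reason the free $MV$-algebra is only \emph{weakly} free, is that an infinite family $\{f(x)\mid x\in X\}$ may fail to be bounded above by any single idempotent of $M$; the device of truncating by each $b\in\mI(M)$ and relaxing equality to $\sim$ circumvents exactly this failure. The main technical obstacle I anticipate is the verification of condition (iii) — the coherence $\varphi_b=\varphi_{b'}(\cdot)\wedge b$ of the truncations — together with the reduction step in the uniqueness proof that replaces an arbitrary $\psi$ by its top-domain components; both rest on freeness (agreement on the generators $\tau(X)$ forces equality of $MV$-homomorphisms) and on Proposition~\ref{3.5}.
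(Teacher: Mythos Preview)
Your proof is correct and in fact slightly more elementary than the paper's. The paper invokes the Basic Representation Theorem (Theorem~\ref{th:embed}) to embed $M$ as a maximal ideal of an $MV$-algebra $N$, extends $f$ to a single $MV$-homomorphism $\phi\colon F(X)\to N$, and then obtains the family by truncation $\phi_a(z)=\phi(z)\wedge a$. You bypass the representation theorem entirely: for each $b\in\mI(M)$ you use freeness of $F(X)$ directly against the $MV$-algebra $[0,b]$ to produce $\varphi_b$, and then verify coherence (condition~(iii)) on generators. The two constructions yield literally the same family (since $\phi_a(\tau(x))=f(x)\wedge a$ forces $\phi_a=\varphi_a$ by freeness), but your route is self-contained and does not depend on the structure theorem for $EMV$-algebras. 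The uniqueness arguments are essentially identical; one small citation slip: the reduction to components with $c_j=1$ is really Proposition~\ref{eq.property}(iii) rather than Proposition~\ref{9.3.1}, though the reasoning you give is the right one.
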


\begin{proof}
Let $M$ be an $EMV$-algebra, $f:X\to M$ be any mapping,  $\tau:X \to F(X)$ be an embedding, and see the diagram given by Figure 2. By the basic representation theorem of $EMV$-algebras,
$M$ is an $MV$-algebra or there exists an $MV$-algebra $N$ such that $M$ is a maximal ideal of $N$.
	 If $M$ is an $MV$-algebra, the proof is evident from \cite{mundici 1}.

If $M$ is not an $MV$-algebra, there is an
	 $MV$-homomorphism $\phi:F(X)\ra N$ such that $\phi\circ \tau(x)=f(x)$ for all $x\in X$.
	 Clearly, $\{1\}$, the greatest element of $F(X)$, is a full subset of $F(X)$. For each $a\in \mI(M)$, define
	 $\phi_a:[0,1]\ra [0,a]$ by $\phi_a(z)=\phi(z)\wedge a$ for all $z\in F(X)$.
	 Since $M$ is an ideal of $N$ and $a\in M$, then $[0,a]\s M$.
	 It can be easily seen that $\phi_a$ is an $MV$-homomorphism, for all $a\in \mI(M)$,
	 hence, $\beta=\{\phi_a\mid a\in \mI(M)\}$ is an $EMV$-morphism from $F(X)$ to $M$.
	 Let $x\in X$ and $a\in\mI(M)$ (clearly, $\tau(x)\in Dom(\phi_a)$). Then by definition,
	 $\phi_a(\tau(x))=\phi(\tau(x))\wedge a=f(x)\wedge a$. That is, $\beta\circ \tau\sim f$.
	
Now, we show that $\beta$ is $\approx$-unique.
	 Let $\gamma=\{\gamma_j\mid j\in I\}:F(X)\ra M$ be an $EMV$-morphism such that $\mathsf{e}(\gamma)=\{c_j\mid j\in I\}$
	 and $\gamma\circ \tau\sim f$. Clearly, $1\in \mathsf{e}(\gamma)$ (since $\mathsf{e}(\gamma)$ is a full subset of $F(X)$).
	 Let $K=\{j\in I\mid c_j=1\}$ and $h:=\{\gamma_j\mid j\in K\}$.
	 Then by Proposition \ref{eq.property}(iii), $h:F(X)\ra M$ is an $EMV$-morphism which is
	 similar to $\gamma$. We claim that $h\approx \beta$.  That is, for each $j\in K$, there is $a\in\mI(M)$ such that
	 $\gamma_j(c_j)=\gamma_j(1)\leq \phi_a(1)=a$ and for each $x\in [0,1]=F(X)$, we have $\gamma_j(x)=\phi_a(x)\wedge \gamma_j(1)$
	 (we note that $Dom(\gamma_j)=Dom(\phi_a)=F(X)$).
	 Put $j\in K$. Consider the map $f_j:X\ra M$ defined by $f_j(x)=f(x)\wedge \gamma_j(1)$. Set $a=\gamma_j(1)$. Then
	 $a\in \mI(M)$. Since $\gamma\circ \tau\sim f$, then for each $x\in X$, $\gamma_j(\tau(x))=f(x)\wedge \gamma_j(1)=f(x)\wedge a=\phi_a(\tau(x))$
	 and so we can say that the following diagram for $MV$-algebras given by Figure 4 is ``$\sim$-commutative",
	 \setlength{\unitlength}{1mm}
	 \begin{figure}[!ht]
	 	\begin{center}
	 		\begin{picture}(40,20)
	 		\put(12,17){\vector(2,0){12}} \put(8,14){\vector(0,-1){12}}
	 		\put(5,-2){\makebox(4,2){{ $[0,a]$}}}
	 		\put(5,16){\makebox(4,2){{ $X$}}}
	 		\put(27,16){\makebox(4,2){{ $F(X)$}}}
	 		\put(15,19){\makebox(4,2){{ $\tau$}}}
	 		\put(1,7){\makebox(4,2){{$f_j$}}}
	 		\put(25,14){\vector(-1,-1){12}}
	 		\put(27,12){\vector(-1,-1){12}}
	 		\put(15,10){\makebox(4,2){{ $\gamma_j$}}}
	 		\put(23,6){\makebox(4,2){{ $\phi_a$}}}
	 		\end{picture}
	 		\caption{ }
	 	\end{center}
	 \end{figure}
which implies that $\phi_a=\gamma_j$ ($F(X)$ is the free $MV$-algebra on $X$). That is, for each $x\in F(X)$,
$\gamma_j(x)=\gamma_j(x)\wedge \gamma_j(1)=\phi_a(x)\wedge \gamma_j(1)=\phi_a(x)\wedge a$. It follows that
$h\approx\beta$ and so $\gamma\approx h\approx\beta$. Therefore, $F(X)$ is a weakly free $EMV$-algebra on $X$.
\end{proof}

We note the elements of the free $MV$-algebra on an infinite set $X$ are McNaughton functions $[0,1]^\kappa \to [0,1]$, where $\kappa =|X|$, see \cite{mundici 1}.

\begin{thm}\label{weakfree prop}
If $M_1$ and $M_2$ are weakly free $EMV$-algebras on a set $X$, then they are $\approx$-isomorphic.
\end{thm}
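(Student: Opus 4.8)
The plan is to run the standard ``uniqueness of a universal object'' argument, but carried out for the weaker commutation relation $\sim$ instead of genuine equality. Write $\tau_1\colon X\to M_1$ and $\tau_2\colon X\to M_2$ for the two structure maps. Applying the weakly free property of $M_1$ to the map $\tau_2\colon X\to M_2$ yields an $\approx$-unique $EMV$-morphism $\phi=\{\phi_i\mid i\in I\}\colon M_1\to M_2$ with $\mathsf{e}(\phi)=\{a_i\mid i\in I\}\subseteq\mathcal I(M_1)$ and $\phi\circ\tau_1\sim\tau_2$; symmetrically, the weakly free property of $M_2$ applied to $\tau_1\colon X\to M_1$ gives an $\approx$-unique $\psi=\{\psi_j\mid j\in J\}\colon M_2\to M_1$ with $\mathsf{e}(\psi)=\{b_j\mid j\in J\}\subseteq\mathcal I(M_2)$ and $\psi\circ\tau_2\sim\tau_1$. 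The goal is to prove $\psi\circ\phi\approx Id_{M_1}$ and $\phi\circ\psi\approx Id_{M_2}$, so that $\phi$ is an $EMV$-$\approx$-isomorphism with inverse $\psi$, witnessing that $M_1$ and $M_2$ are $\approx$-isomorphic.

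To obtain $\psi\circ\phi\approx Id_{M_1}$ I would invoke the $\approx$-uniqueness clause of the weakly free property of $M_1$ for the map $\tau_1\colon X\to M_1$: the identity $Id_{M_1}$ trivially satisfies $Id_{M_1}\circ\tau_1\sim\tau_1$, so it suffices to show that $(\psi\circ\phi)\circ\tau_1\sim\tau_1$ as well, and uniqueness then forces $\psi\circ\phi\approx Id_{M_1}$. By Proposition \ref{9.2}, $\psi\circ\phi=\{\psi_j\circ\phi_i\mid (i,j)\in U\}$ with $U=\{(i,j)\mid\phi_i(a_i)\le b_j\}$, so, recalling Definition \ref{de:wfree}, I must check that for every $x\in X$ and every $(i,j)\in U$ with $\tau_1(x)\le a_i$ one has $\tau_1(x)\wedge(\psi_j\circ\phi_i)(a_i)=(\psi_j\circ\phi_i)(\tau_1(x))$.

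This verification is the heart of the matter. Fix $x$, $(i,j)\in U$ with $\tau_1(x)\le a_i$, and put $c:=\phi_i(a_i)\in\mathcal I(M_2)$, so $c\le b_j$. From $\phi\circ\tau_1\sim\tau_2$ I get $\phi_i(\tau_1(x))=\tau_2(x)\wedge c$, hence $(\psi_j\circ\phi_i)(\tau_1(x))=\psi_j(\tau_2(x)\wedge c)$. Since $\{b_k\mid k\in J\}$ is full in $\mathcal I(M_2)$, I can choose $j''\in J$ with $\tau_2(x)\le b_{j''}$, $b_j\le b_{j''}$ and $\psi_j(b_j)\le\psi_{j''}(b_{j''})$ (using fullness together with Definition \ref{9.1}(iv)). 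On $[0,b_{j''}]$ the map $\psi_{j''}$ is an $MV$-homomorphism, so $\psi_{j''}(\tau_2(x)\wedge c)=\psi_{j''}(\tau_2(x))\wedge\psi_{j''}(c)$, while Definition \ref{9.1}(iii) (compatibility of the components, cf. Proposition \ref{pr:properties}) gives $\psi_j(y)=\psi_{j''}(y)\wedge\psi_j(b_j)$ for all $y\in[0,b_j]$, applied to $y=\tau_2(x)\wedge c$ and to $y=c$. Combining these identities and using $\psi_j(c)\le\psi_j(b_j)\le\psi_{j''}(b_{j''})$ collapses the expression to $\psi_j(\tau_2(x)\wedge c)=\psi_{j''}(\tau_2(x))\wedge\psi_j(c)$. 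Finally, $\psi\circ\tau_2\sim\tau_1$ at the index $j''$ (legitimate since $\tau_2(x)\le b_{j''}$) yields $\psi_{j''}(\tau_2(x))=\tau_1(x)\wedge\psi_{j''}(b_{j''})$, and one more absorption with $\psi_j(c)\le\psi_{j''}(b_{j''})$ produces $\psi_j(\tau_2(x)\wedge c)=\tau_1(x)\wedge\psi_j(c)=\tau_1(x)\wedge(\psi_j\circ\phi_i)(a_i)$, which is exactly the required identity. Thus $(\psi\circ\phi)\circ\tau_1\sim\tau_1$.

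By the symmetric computation $(\phi\circ\psi)\circ\tau_2\sim\tau_2$, and the $\approx$-uniqueness clauses then give $\psi\circ\phi\approx Id_{M_1}$ and $\phi\circ\psi\approx Id_{M_2}$; hence $\phi$ is an $EMV$-$\approx$-isomorphism and $M_1$, $M_2$ are $\approx$-isomorphic. The main obstacle is precisely the middle step: unlike in the free case (Proposition \ref{pr:free}(3)), where $\phi\circ\psi\circ\tau=\tau'$ follows by plain substitution, here $\sim$ is only a meet-truncated equality, so composing the two $\sim$-relations is not automatic and requires the index-matching above---selecting a common dominating index $j''$ and repeatedly using the compatibility axiom Definition \ref{9.1}(iii) and the absorption of meets---to reconstitute the truncated identity after passing through $\psi$.
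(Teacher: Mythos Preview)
Your proof is correct and follows essentially the same approach as the paper's: both obtain $\phi$ and $\psi$ from the two universal properties, reduce $\psi\circ\phi\approx Id_{M_1}$ to verifying $(\psi\circ\phi)\circ\tau_1\sim\tau_1$, and carry out the same index-matching computation---choosing a dominating index (your $j''$, the paper's $k$) above both $b_j$ and $\tau_2(x)$, splitting the meet via the $MV$-homomorphism there, and collapsing back using Definition~\ref{9.1}(iii) and the absorption $\psi_j(c)\le\psi_{j''}(b_{j''})$. The organization and notation differ slightly (you name $c=\phi_i(a_i)$ and spell out the absorption steps a bit more explicitly), but the argument is the same.
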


\begin{proof}
	Let $X$ be a set and $M_1$ and $M_2$ be weakly free $EMV$-algebras on $X$ with the maps $\tau_1:X\ra M_1$ and $\tau_2:X\ra M_2$.
	Then there exist two $\approx$-unique $EMV$-morphisms $\varphi:M_1\ra M_2$ and $\psi:M_2\ra M_1$ such that $\varphi\circ \tau_1\sim\tau_2$ and
	$\psi\circ \tau_2\sim\tau_1$. Let $\mathsf{e}(\varphi)=\{a_i\mid i\in I\}$ and $\mathsf{e}(\psi)=\{b_j\mid j\in J\}$.
	Consider the $EMV$-morphisms $\psi\circ \varphi:M_1\ra M_1$ and $\varphi\circ \psi:M_2\ra M_2$.
	We show that $\psi\circ \varphi\approx Id_{M_1}$ and $\varphi\circ \psi\approx Id_{M_2}$ (see Proposition \ref{equal to identity} and
	Corollary \ref{isomorphism}) which implies that $M_1$ and $M_2$ are $\approx$-isomorphic $EMV$-algebras.
	Set $U=\{(i,j)\in I\times J\mid \varphi(a_i)\leq b_j\}$. We know that
	$\psi\circ\varphi=\{\psi_j\circ\varphi_i\mid (i,j)\in U\}$ and $\mathsf{e}(\psi\circ\varphi)=\{a_i\mid (i,j)\in U\}$.
	For simplicity, set $\beta=\psi\circ\varphi$.
	First, we must show that for each $x\in X$ and each $(i,j)\in U$ such that $\tau_1(x)\leq a_i$, then
	$\psi_j\circ\varphi_i(\tau_1(x))=\tau_1(x)\wedge (\psi_j\circ \varphi_i(a_i))$.  Put $x\in X$ and $i\in I$ such that $\tau_1(x)\leq a_i$.
	From $\varphi\circ \tau_1\sim\tau_2$ it follows that $\varphi_i(\tau_1(x))=\tau_2(x)\wedge\varphi_i(a_i)$ and so
	$\psi_j(\varphi_i(\tau_1(x)))=\psi_j(\tau_2(x)\wedge\varphi_i(a_i))$. 	
	Since $\psi$ is an $EMV$-morphism, there exists $k\in J$ such that 	 $b_k\geq b_j,\tau_2(x)$ and $\psi_k(b_k)\geq\psi_j(b_j)$,
	which imply that $\psi_j(\varphi_i(\tau_1(x)))=\psi_j(\tau_2(x)\wedge\varphi_i(a_i)) =\psi_k(\tau_2(x)\wedge\varphi_i(a_i))\wedge\psi_j(b_j)
	=\psi_k(\tau_2(x))\wedge \psi_k(\varphi_i(a_i))\wedge \psi_j(b_j)$. Since $\varphi_i(a_i)\leq b_j$, then
	$\psi_k(\varphi_i(a_i))\wedge \psi_j(b_j)=\psi_j(\varphi_i(a_i))$. Also,
	$\psi_k(b_k)\geq \psi_k(b_j)\geq\psi_j(b_j)\geq\psi_j(\varphi_i(a_i))$, so that
	\begin{eqnarray*}
	\psi_j\circ\varphi_i(\tau_1(x))&=&\psi_k(\tau_2(x))\wedge \psi_j(\varphi_i(a_i))=\tau_1(x)\wedge \psi_k(b_k)\wedge\psi_j(\varphi_i(a_i)),\mbox{ since $\psi\circ \tau_2\sim\tau_1$}\\
	&=&  \tau_1(x)\wedge\psi_j(\varphi_i(a_i))=\tau_1(x)\wedge(\psi_j\circ \varphi_i(a_i)).
	\end{eqnarray*}

That is, the diagram given by Figure 5 is ``$\sim$-commutative".
\setlength{\unitlength}{1mm}
\begin{figure}[!ht]
	\begin{center}
		\begin{picture}(40,20)
		\put(12,17){\vector(2,0){12}} \put(8,14){\vector(0,-1){12}}
		\put(5,-2){\makebox(4,2){{ $M_1$}}}
		\put(5,16){\makebox(4,2){{ $X$}}}
		\put(27,16){\makebox(4,2){{ $M_1$}}}
		\put(15,19){\makebox(4,2){{ $\tau_1$}}}
		\put(1,7){\makebox(4,2){{$\tau_1$}}}
		\put(25,14){\vector(-1,-1){12}}
		\put(25,6){\makebox(4,2){{ $\psi\circ\varphi$}}}
		\end{picture}
		\caption{ }
	\end{center}
\end{figure}
Clearly, $Id_{M_1}\circ\tau_1\sim\tau_1$. Thus $Id_{M_1}\approx\psi\circ\varphi$ (since $M_1$ is a weakly free $EMV$-algebra on $X$).
In a similar way, we can show that
$Id_{M_2}\approx \varphi\circ\psi$. Therefore, $M_1$ and $M_2$ are $\approx$-isomorphic $EMV$-algebras.
\end{proof}

\begin{lem}\label{le:X}
If $|X|=|X'|$, then $F_w(X)$ and $F_w(X')$ are $\approx$-isomorphic.
\end{lem}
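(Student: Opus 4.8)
The plan is to mimic the proof of Theorem \ref{weakfree prop}, inserting the bijection furnished by $|X|=|X'|$ exactly as in Proposition \ref{pr:free}(4). By Lemma \ref{le:wfree}, let $\tau_1:X\ra F_w(X)$ and $\tau_2:X'\ra F_w(X')$ be the injective maps determining the two weakly free $EMV$-algebras, and fix a bijection $\beta:X\ra X'$. Viewing $\tau_2\circ\beta$ as a map $X\ra F_w(X')$, the weakly free property of $F_w(X)$ yields an $\approx$-unique $EMV$-morphism $\phi=\{\phi_i\mid i\in I\}:F_w(X)\ra F_w(X')$ with $\mathsf{e}(\phi)=\{a_i\mid i\in I\}$ and $\phi\circ\tau_1\sim\tau_2\circ\beta$; symmetrically, using $\tau_1\circ\beta^{-1}:X'\ra F_w(X)$ and the weakly free property of $F_w(X')$ we obtain an $\approx$-unique $\psi=\{\psi_j\mid j\in J\}:F_w(X')\ra F_w(X)$ with $\mathsf{e}(\psi)=\{b_j\mid j\in J\}$ and $\psi\circ\tau_2\sim\tau_1\circ\beta^{-1}$. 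The goal is then to prove $\psi\circ\phi\approx Id_{F_w(X)}$ and $\phi\circ\psi\approx Id_{F_w(X')}$, which by Proposition \ref{equal to identity} and Corollary \ref{isomorphism} says precisely that $F_w(X)$ and $F_w(X')$ are $\approx$-isomorphic.

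To obtain $\psi\circ\phi\approx Id_{F_w(X)}$, I would show that $(\psi\circ\phi)\circ\tau_1\sim\tau_1$ and invoke the $\approx$-uniqueness clause of Definition \ref{de:wfree} (since trivially $Id_{F_w(X)}\circ\tau_1\sim\tau_1$). Recall from Proposition \ref{9.2} that $\psi\circ\phi=\{\psi_j\circ\phi_i\mid (i,j)\in U\}$ with $U=\{(i,j)\mid\phi_i(a_i)\le b_j\}$. Fix $x\in X$ and $(i,j)\in U$ with $\tau_1(x)\le a_i$; I must verify $(\psi_j\circ\phi_i)(\tau_1(x))=\tau_1(x)\wedge(\psi_j\circ\phi_i)(a_i)$. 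The relation $\phi\circ\tau_1\sim\tau_2\circ\beta$ gives $\phi_i(\tau_1(x))=\tau_2(\beta(x))\wedge\phi_i(a_i)$, so $\psi_j(\phi_i(\tau_1(x)))=\psi_j(\tau_2(\beta(x))\wedge\phi_i(a_i))$. Choosing $k\in J$ with $b_k\ge b_j,\tau_2(\beta(x))$ and $\psi_k(b_k)\ge\psi_j(b_j)$, and using that $\psi_k$ is an $MV$-homomorphism together with $\phi_i(a_i)\le b_j$ and condition (iii) of Definition \ref{9.1} for $\psi$, the right-hand side collapses to $\psi_k(\tau_2(\beta(x)))\wedge\psi_j(\phi_i(a_i))$. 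Finally $\psi\circ\tau_2\sim\tau_1\circ\beta^{-1}$, applied at the point $\beta(x)\in X'$ (so that $\beta^{-1}(\beta(x))=x$), gives $\psi_k(\tau_2(\beta(x)))=\tau_1(x)\wedge\psi_k(b_k)$; since $\psi_k(b_k)\ge\psi_j(b_j)\ge\psi_j(\phi_i(a_i))$, I conclude $(\psi_j\circ\phi_i)(\tau_1(x))=\tau_1(x)\wedge\psi_j(\phi_i(a_i))$, as required. The computation for $\phi\circ\psi\approx Id_{F_w(X')}$ is entirely symmetric, interchanging the roles of $(\tau_1,\phi,\beta)$ and $(\tau_2,\psi,\beta^{-1})$.

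The existence of $\phi$ and $\psi$ as $EMV$-morphisms is immediate from Definition \ref{de:wfree}, so the routine part is harmless. The delicate point, and the main obstacle, is controlling the relation $\sim$ under composition: passing from $\phi\circ\tau_1\sim\tau_2\circ\beta$ and $\psi\circ\tau_2\sim\tau_1\circ\beta^{-1}$ to a $\sim$-statement about $\psi\circ\phi$ cannot be done by naively ``applying $\psi$ to both sides'', because $\sim$ is formulated componentwise. As in Theorem \ref{weakfree prop}, one must align the indices $i,j,k$ through Definition \ref{9.1}(iv) so that the meet identities produced by the $MV$-homomorphism property of $\psi_k$ simplify correctly; this index bookkeeping is the only real content. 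Once it is carried out, the $\approx$-uniqueness in Definition \ref{de:wfree} closes the argument precisely as in Proposition \ref{pr:free}(4).
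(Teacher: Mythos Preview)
Your proposal is correct and follows essentially the same route as the paper: produce $\phi$ and $\psi$ from the weakly free property applied to $\tau_2\circ\beta$ and $\tau_1\circ\beta^{-1}$, verify $(\psi\circ\phi)\circ\tau_1\sim\tau_1$ componentwise, and invoke $\approx$-uniqueness. In fact your index bookkeeping (introducing $k\in J$ with $b_k\ge b_j,\tau_2(\beta(x))$ so that $\psi_k$ can legitimately be applied to $\tau_2(\beta(x))$) is more careful than the paper's own compressed computation, which writes $\psi_j(\tau'(\beta(x)))$ directly; your version matches the detailed argument the paper gives in Theorem~\ref{weakfree prop}.
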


\begin{proof}
Let $\tau:X \to F_w(X)$ and $\tau':X'\to F_w(X')$ be injective mappings determining $F_w(X)$ and $F_w(X')$, respectively. Since $|X|=|X'|$, there is a bijective mapping $\beta:X\to X'$.
There are an $\approx$-unique $EMV$-morphism $\phi=\{\phi_i\mid i \in I\}:F_w(X)\to F_w(X')$ with $\mathsf e(\phi)=\{a_i\mid i\in I\}$ and an $\approx$-unique $EMV$-morphism $\psi=\{\psi_j\mid j \in J\}:F_w(X')\to F_w(X)$ with $\mathsf e(\psi)=\{b_j\mid j \in J\}$ such that  $\tau'\circ \beta\sim \phi \circ \tau$ and $\tau\circ \beta^{-1} \sim \psi \circ \tau'$.
We claim to prove that $\psi\circ \phi \approx Id_{F_w(X)}$. To show that, we establish that $\psi\circ \phi \circ \tau \sim \tau$. Put $U=\{(i,j)\in I\times J\mid \phi_i(a_i)\le b_j\}$. Then $\psi\circ \phi=\{\psi_j\circ \phi_i\mid (i,j)\in U\}$ and $\mathsf e(\psi\circ \phi)=\{a_i\mid (i,j)\in U\}$.

Due to $\tau'\circ \beta\sim \phi \circ \tau$ and $\tau\circ \beta^{-1} \sim \psi \circ \tau'$, we have
\begin{eqnarray*}
\phi_i(\tau(x)) &=& \tau'(\beta(x))\wedge \phi_i(a_i), \quad \mbox{if }\ x\in X, \tau(x)\le a_i,\\
\psi_j(\tau'(x'))&=& \tau(\beta^{-1}(x'))\wedge \psi_j(b_j),\quad \mbox{if }\ x'\in X', \tau'(x')\le b_j.
\end{eqnarray*}
Then $\tau'(\beta(x))\wedge \phi_i(a_i)=\phi_i(\tau(x))$ if $x\in X$ and $\tau(x)\le a_i$, and $\tau(\beta^{-1}(x'))\wedge \psi_j(b_j)= \psi_j(\tau'(x'))$ if $x'\in X$ and $\tau'(x')\le b_j$. Take $(i,j)\in U$ and $\tau(x)\le a_i$ for $x \in X$. Whence
\begin{eqnarray*}
\psi_j(\phi_i(\tau(x)))&=& \psi_j(\tau'(\beta(x)))\wedge (\psi_j\circ \phi_i(a_i))\\
&=& \tau(\beta^{-1}(\beta(x)))\wedge \psi_j(b_j)\wedge (\psi_j\circ \phi_i(a_i))\\
&=& \tau(x) \wedge (\psi_j\circ \phi_i(a_i)).
\end{eqnarray*}
Then $(\psi \circ \phi)\circ \tau \sim\tau$ which yields $\psi\circ \phi \approx Id_{F_w(X)}$.

In the same way we prove $\phi \circ \psi \approx Id_{F_w(X')}$. Therefore, $F_w(X)$ and $F_w(X')$ are $\approx$-isomorphic.
\end{proof}

\begin{lem}\label{le:F_w(X)}
    Let $F_w(X)$ be a weakly free $EMV$-algebra on a set $X$ with an embedding $\tau$. If $A$ is a full
subalgebra of $F_w(X)$ containing $\tau(X)$, then $A$ is equal to $F_w(X)$.
\end{lem}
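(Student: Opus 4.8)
The plan is to follow the proof of Lemma \ref{le:F(X)} almost verbatim, with the exact commuting relation $=$ replaced by the weaker relation $\sim$ and the universal property of Definition \ref{de:free} replaced by that of Definition \ref{de:wfree}. Write $B:=F_w(X)$ with its embedding $\tau:X\to B$. Since $\tau(X)\subseteq A$, I first define $\tau':X\to A$ by $\tau'(x)=\tau(x)$, and let $\mu:A\to B$ be the inclusion; because $A$ is a full subalgebra of $B$, $\mu$ is an $EMV$-morphism $\mu=\{\mu_b\mid b\in\mathcal I(A)\}$ with $\mu_b=\mu|_{[0,b]}$. Applying the weakly free property of $B$ to the $EMV$-algebra $M=A$ and the map $f=\tau'$, I obtain an $\approx$-unique $EMV$-morphism $\phi=\{\phi_i\mid i\in I\}:B\to A$ with $\mathsf{e}(\phi)=\{a_i\mid i\in I\}$ satisfying $\phi\circ\tau\sim\tau'$; explicitly, $\tau(x)\wedge\phi_i(a_i)=\phi_i(\tau(x))$ whenever $x\in X$ and $\tau(x)\le a_i$ (here the meets computed in $A$ and in $B$ agree, as $A$ is an $EMV$-subalgebra).

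The crucial step is to prove $\mu\circ\phi\approx Id_B$, which I would obtain from the $\approx$-uniqueness clause of Definition \ref{de:wfree} applied with $M=B$ and $f=\tau$: I show that both $Id_B$ and $\mu\circ\phi$ satisfy $\,\cdot\,\circ\tau\sim\tau$. For the identity this is immediate, since $Id_B=\{Id_{[0,a]}\mid a\in\mathcal I(B)\}$ and $\tau(x)\wedge a=\tau(x)=Id_{[0,a]}(\tau(x))$ whenever $\tau(x)\le a$. For $\mu\circ\phi$, Proposition \ref{9.2} gives $\mu\circ\phi=\{\mu_b\circ\phi_i\mid (i,b)\in U\}$ with $U=\{(i,b)\mid \phi_i(a_i)\le b\}$ and $\mathsf{e}(\mu\circ\phi)=\{a_i\mid (i,b)\in U\}$; since $\mu_b$ is the inclusion we have $(\mu_b\circ\phi_i)(z)=\phi_i(z)$ and $(\mu_b\circ\phi_i)(a_i)=\phi_i(a_i)$, so the condition $\tau(x)\wedge(\mu_b\circ\phi_i)(a_i)=(\mu_b\circ\phi_i)(\tau(x))$ collapses to $\tau(x)\wedge\phi_i(a_i)=\phi_i(\tau(x))$, which is exactly what $\phi\circ\tau\sim\tau'$ provides (recall $\tau'=\tau$ on values). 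Hence $(\mu\circ\phi)\circ\tau\sim\tau$, and by uniqueness $\mu\circ\phi\approx Id_B$.

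To finish, I would repeat the surjectivity argument of Lemma \ref{le:F(X)}. From $\mu\circ\phi\approx Id_B$, Proposition \ref{equal to identity}(ii) yields $(\mu_b\circ\phi_i)(x)=x$ for all $x\le(\mu_b\circ\phi_i)(a_i)=\phi_i(a_i)$, i.e. $\phi_i(x)=x$ whenever $x\le\phi_i(a_i)$. Given an arbitrary $v\in B$, by $(EMV4)$ choose an idempotent $c\ge v$; fullness of $\mathsf{e}(\mu\circ\phi)$ in $\mathcal I(B)$ furnishes an index with domain idempotent $\ge c$, fullness of the image idempotents $\{\phi_i(a_i)\}$ in $B$ (guaranteed by $\mu\circ\phi$ being an $EMV$-morphism into $B$, Definition \ref{9.1}(ii)) furnishes an index with image idempotent $\ge c$, and condition (iv) of Definition \ref{9.1} for $\mu\circ\phi$ merges them into a single index $i$ with $a_i\ge c$ and $\phi_i(a_i)\ge c$. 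Then $v\le c\le\phi_i(a_i)$ gives $\phi_i(v)=v$, and since $\phi_i(v)\in\phi_i([0,a_i])\subseteq A$ we conclude $v\in A$. As $v$ was arbitrary and $A\subseteq B$, this gives $A=F_w(X)$.

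The main obstacle I anticipate lies in the bookkeeping for the composite $\mu\circ\phi$: first, checking that the relation $\sim$ is preserved under post-composition with the inclusion $\mu$ (which works only because each $\mu_b$ acts as the identity on elements, so no genuine meet is introduced), and second, correctly reading off the domain and image idempotents of $\mu\circ\phi$ so that the two fullness conditions together with axiom (iv) of Definition \ref{9.1} can be used to produce one index $i$ that simultaneously dominates $v$ in both senses. Once these are in place, the $\approx$-uniqueness step and the application of Proposition \ref{equal to identity} are routine.
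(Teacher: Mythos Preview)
Your proposal is correct and follows essentially the same route as the paper's proof: set up $\tau'=\tau$, $\mu$ the inclusion, obtain $\phi$ from the weakly free property, verify $(\mu\circ\phi)\circ\tau\sim\tau$ by using that each $\mu_b$ acts as the identity so the $\sim$-condition collapses to $\phi\circ\tau\sim\tau'$, conclude $\mu\circ\phi\approx Id_{F_w(X)}$ by $\approx$-uniqueness, and then extract $v=\phi_i(v)\in A$ via Proposition~\ref{equal to identity}. Your final paragraph is in fact more explicit than the paper's (which simply asserts ``there exists $i\in I$ such that $v\le a_i$ and $v\le\phi_i(a_i)$'') in showing how Definition~\ref{9.1}(i), (ii), (iv) for the composite $\mu\circ\phi$ combine to produce a single index dominating $v$ in both senses.
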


\begin{proof}
We follow ideas and notations from the proof of Lemma \ref{le:F(X)}.

Let $\tau: X\to F_w(X)$ and $\tau': X \to A$ be such mappings that $\tau(x)=\tau'(x)$ for each $x \in X$. Let $\mu:A \to F_w(X)$ be the natural embedding, i.e. $\mu(y)=y$ for each $y \in A$.
Since $A$ is a full subalgebra of $F_w(X)$, the inclusion map $\mu:A\ra
F_w(X)$ can be viewed as an $EMV$-morphism $\mu =\{\mu_a\mid a \in \mathcal I(A)\}$, where $\mu_a=\mu|_{[0,a]}$ for each $a\in \mathcal I(A)$.
The definition of $F_w(X)$ entails that there is an $\approx$-unique $EMV$-morphism $\phi=\{\phi_i\mid i \in I\}: F_w(X)\to A$ with $\mathsf e(\phi)=\{a_i \mid i \in I\}$ such that $\phi \circ \tau \sim\tau'$. That is, if $x \in X$ and $\tau(x)\le a_i$, then
$$
\tau'(x)\wedge \phi_i(a_i)=\phi_i(\tau(x)).
$$

Set $U:=\{(i,a)\in I\times \mI(A)\mid \phi_i(a_i)\leq a\}$. By
definition, $\mu\circ\phi=\{\mu_a\circ \phi_i\mid (i,a)\in U\}$ with $\mathsf e(\mu\circ \phi)=\{a_i\mid (i,a)\in U\}$.

We assert $(\mu\circ \phi)\circ \tau \sim \tau$. Check, for $\tau(x) \le a_i\le a$:
$$
\tau(x)\wedge (\mu_a\circ \phi_i)(a_i) =\tau(x)\wedge \phi_i(a_i)=\tau'(x)\wedge \phi_i(a_i)= \phi_i(\tau(x))=(\mu_a\circ \phi_i)\circ \tau(x).
$$
Then clearly $\mu\circ \phi \approx Id_{F_w(X)}$.

Let $v$ be an arbitrary element of $F_w(X)$. Then there exists $i\in
I$ such that $v\leq a_i$ and $v\leq \phi_i(a_i)$.  Since $\mu\circ\phi\approx Id_{F_w(X)}$, by Proposition \ref{equal to identity}, for each $a\in\mI(A)$ with $\phi_i(a_i)\leq a$, we have $\mu_a\circ \phi_i(v)=v$. That is, $v=\phi_i(v)\in A$. Therefore, $A=F_w(X)$.
\end{proof}

\begin{cor}\label{co:X finite}
If $X$ is a finite set, then the free $EMV$-algebra on $X$ and the weakly free $EMV$-algebra on $X$ are $\approx$-isomorphic.
\end{cor}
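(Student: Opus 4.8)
The plan is to observe that, for finite $X$, a single object simultaneously realizes both universal properties, and then to invoke the two uniqueness results to transfer this to arbitrary representatives. Let $F(X)$ denote the free $MV$-algebra on the finite set $X$. Since $X$ is finite, Theorem \ref{9.7} tells us that $F(X)$ is a free $EMV$-algebra on $X$. Independently, Theorem \ref{weak free}, which holds for every non-empty set, tells us that this same $F(X)$ is a weakly free $EMV$-algebra on $X$. Thus $F(X)$ satisfies both universal properties at once and serves as the bridge between the two notions.

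Next I would pass from $F(X)$ to arbitrary free and weakly free objects by means of uniqueness. Let $A$ be any free $EMV$-algebra on $X$ and let $B$ be any weakly free $EMV$-algebra on $X$. Since both $A$ and $F(X)$ are free $EMV$-algebras on $X$, Proposition \ref{pr:free}(3) yields $EMV$-morphisms $\phi:A\to F(X)$ and $\phi':F(X)\to A$ with $\phi'\circ\phi\approx Id_A$ and $\phi\circ\phi'\approx Id_{F(X)}$. Since both $F(X)$ and $B$ are weakly free $EMV$-algebras on $X$, Theorem \ref{weakfree prop} yields $EMV$-morphisms $\psi:F(X)\to B$ and $\psi':B\to F(X)$ with $\psi'\circ\psi\approx Id_{F(X)}$ and $\psi\circ\psi'\approx Id_B$.

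Finally I would compose these to get a $\approx$-isomorphism between $A$ and $B$. Consider $\psi\circ\phi:A\to B$ and $\phi'\circ\psi':B\to A$. Using the $\approx$-associativity of $\circ$ (Proposition \ref{pr:assoc}) together with the compatibility of $\approx$ with $\circ$ (Proposition \ref{9.2.1}), one computes
\[
(\phi'\circ\psi')\circ(\psi\circ\phi)\approx \phi'\circ(\psi'\circ\psi)\circ\phi\approx \phi'\circ Id_{F(X)}\circ\phi\approx \phi'\circ\phi\approx Id_A,
\]
and symmetrically $(\psi\circ\phi)\circ(\phi'\circ\psi')\approx Id_B$. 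Hence $A$ and $B$ are $\approx$-isomorphic, which is the assertion. The only point requiring care is that $\approx$-isomorphism is transitive; this is precisely the displayed computation, and it is routine once composition is known to be $\approx$-associative and $\approx$ is a congruence for $\circ$. I do not expect any genuine obstacle here, since all the substantive work was already carried out in Theorems \ref{9.7} and \ref{weak free}; the corollary is essentially a bookkeeping consequence of the fact that $F(X)$ lies in both classes.
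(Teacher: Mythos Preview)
Your proposal is correct and follows essentially the same route as the paper: exhibit the free $MV$-algebra $F(X)$ as a common model of both universal properties via Theorems \ref{9.7} and \ref{weak free}, and then appeal to the uniqueness results (Proposition \ref{pr:free}(3) and Theorem \ref{weakfree prop}). The paper's own proof is terser and does not spell out the transitivity of $\approx$-isomorphism, whereas you carry out the composition explicitly using Propositions \ref{pr:assoc} and \ref{9.2.1}; this added detail is sound but not a genuinely different argument. One cosmetic point: the paper restricts to non-empty $X$ at the outset (since Theorem \ref{weak free} is stated for $X\neq\emptyset$), and you may wish to note this explicitly as well.
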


\begin{proof}
Let $X$ be a finite non-empty set. By Theorem \ref{9.7} and Theorem \ref{weak free}, if $F(X)$ is a a free $MV$-algebra on $X$, then it is both a free $EMV$-algebra and a weakly free $EMV$-algebra. Since every free MV-algebra is a weakly free $EMV$-algebra, Theorem \ref{weakfree prop} gives the result.
\end{proof}

\section{Conclusion}

Recently the authors introduced in \cite{Dvz} a new class of algebras called $EMV$-algebras. This structure has a bottom element but not necessarily a top element. They have features close to $MV$-algebras because for every idempotent $a$, the interval $[0,a]$ forms an $MV$-algebra. The family $\mathcal{EMV}$ of $EMV$-algebras is a variety with respect to $EMV$-homomorphisms. However, to study some important objects like free $EMV$-algebras, instead of an $EMV$-homomorphism from $M_1$ into $M_2$, we need an $EMV$-morphism $M_1\to M_2$ which is a family of $MV$-morphisms defined on intervals $[0,a]$, see Definition \ref{9.1}. In the paper, we studied their basic properties of $EMV$-morphisms as e.g. a composition of two $EMV$-morphisms and an equivalence, $\approx$, called similarity, between $EMV$-morphisms from $M_1$ into $M_2$. We note that the composition is not associative, it is only $\approx$-associative, that is associative up to $\approx$, Proposition \ref{pr:assoc}.
$EMV$-morphisms were applied to introduce three categories of $EMV$-algebras. We have showed that the category with objects $EMV$-algebras and morphisms connected with classes of standard $EMV$-morphisms is equivalent to the category where morphisms are classes corresponding to strong $EMV$-homomorphisms, Theorem \ref{cat}. We have studied free $EMV$-algebras on a set $X$ applying $EMV$-morphisms. We have established that if $X$ is a finite non-empty set, then the free $MV$-algebra on $X$ is also a free $EMV$-algebra on $X$, see Theorem \ref{9.7}. To show an analogous result for infinite $X$, we have introduced the so-called weakly free $EMV$-algebra on $X$. In Theorem \ref{weak free}, we have proved that every free $MV$-algebra on $X$ is also a weakly free $EMV$-algebra on $X$.

The suggested $EMV$-morphisms seem to be useful to establish many important properties of the class of $EMV$-algebras also in future.

\section{New}

\begin{prop}
Let $(M;+,0)$ be a commutative monoid with neutral element $0$ satisfies
the following conditions:
\begin{itemize}[nolistsep]
    \item[{\rm (i)}] $(M,\leq)$ is a poset with the least element $0$;
    \item[{\rm (ii)}]  for each $x,y\in M$, there is $a\in \mI(M)$ such
that $x,y\leq a$;
    \item[{\rm (iii)}]  for each $b\in \mI(M)$, the element
    $\lambda_{b}(x)=\min\{z\in[0,b]\mid x\oplus z=b\}$
    exists in $M$ for all $x\in [0,b]$, and the algebra
$([0,b];+,\lambda_{b},0,b)$ is an $MV$-algebra;
    \item[{\rm (iv)}] for all $b\in \mI(M)$, and  for all $x,y\in [0,b]$,
we have $x\leq y$ \iff $\lam_b(x)+y=b$.
\end{itemize}
Then $M$ is an $EMV$-algebra.
\end{prop}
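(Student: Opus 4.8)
The plan is to verify the four defining axioms (EMV1)--(EMV4) of Definition \ref{de:GMV}. Two of them are immediate: (EMV3) is precisely hypothesis (iii), and (EMV4) follows from (ii) by taking $y=x$. All the work therefore goes into producing the distributive lattice reduct (EMV1) and the order-compatibility of $\oplus$ (EMV2). The decisive hypothesis for both is (iv): it asserts that on each interval $[0,b]$ the intrinsic $MV$-order $\preccurlyeq_b$, defined by $u\preccurlyeq_b v$ iff $\lambda_b(u)\oplus v=b$, coincides with the restriction of the given poset order $\le$. Consequently, for every idempotent $b$ the lattice operations of the $MV$-algebra $([0,b];\oplus,\lambda_b,0,b)$ are exactly the supremum and infimum of pairs taken in the poset $([0,b],\le)$.

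First I would define the global operations. Given $x,y\in M$, choose by (ii) an idempotent $a$ with $x,y\le a$ and let $x\vee y$ and $x\wedge y$ be the join and meet of $x,y$ in the $MV$-algebra $[0,a]$; by the remark above these are the supremum and infimum of $\{x,y\}$ in $([0,a],\le)$. The heart of the proof is to show that this does not depend on $a$. If $a,a'$ are two idempotent upper bounds of $\{x,y\}$, then (ii) applied to $a,a'$ yields an idempotent $c$ with $a,a'\le c$, and $a\le c$ gives the inclusion $[0,a]\subseteq[0,c]$. Writing $j_a,j_c$ for the joins computed in $[0,a],[0,c]$, one checks that $j_a\in[0,c]$ is an upper bound of $\{x,y\}$ there, so $j_c\le j_a$; but then $j_c\le j_a\le a$ puts $j_c$ back inside $[0,a]$ as an upper bound, forcing $j_a\le j_c$, hence $j_a=j_c$, and symmetrically $j_{a'}=j_c$. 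The same comparison (even more directly, since any lower bound of $\{x,y\}$ lies under $x\le a$) handles $\wedge$. Running this argument against an arbitrary upper bound $w\in M$, embedded with $a$ into a common interval by (ii), shows moreover that $x\vee y$ is the supremum of $\{x,y\}$ in the entire poset $(M,\le)$, and dually for $x\wedge y$.

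Once $\vee$ and $\wedge$ are well defined as global suprema and infima, (EMV1) falls out: commutativity, associativity, the absorption laws and distributivity are identities in finitely many variables, so by iterating (ii) they may all be evaluated inside a single interval $[0,a]$, which is a distributive lattice because it is an $MV$-algebra, and the bottom element is $0$ by (i); the induced lattice order $x\le y \Leftrightarrow x\vee y=y$ then coincides with the given $\le$. For (EMV2), the monoid $(M;\oplus,0)$ is commutative with neutral element $0$ by hypothesis, so only isotonicity remains: given $x\le y$ and $z\in M$, pick by (ii) an idempotent $a$ with $x,y,z\le a$, recall from (iii) that $[0,a]$ is closed under $\oplus$, and use that addition is isotone for the $MV$-order on $[0,a]$ together with (iv) to conclude $x\oplus z\le y\oplus z$.

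The step I expect to be the main obstacle is the well-definedness of $\vee$ and $\wedge$ in the second paragraph. Everything downstream is a routine transfer of $MV$-algebra identities into a common interval supplied by (ii), but the claim that the $MV$-joins and meets computed in different intervals agree is exactly where (ii) and (iv) must be used in tandem; it is the order-theoretic counterpart, for $\vee$ and $\wedge$, of the well-definedness argument carried out for $\odot$ in Lemma \ref{le:x<y}.
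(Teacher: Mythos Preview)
Your proposal is correct and follows essentially the same route as the paper's proof: both establish (EMV2) by embedding $x,y,z$ in a common interval $[0,b]$ via (ii) and using monotonicity of $\oplus$ in the $MV$-algebra there via (iv), and both establish (EMV1) by showing that the local $MV$-join in $[0,b]$ is the global supremum in $(M,\le)$ by comparing against an arbitrary upper bound pulled into a larger interval. The only cosmetic difference is that the paper proves ``local join $=$ global sup'' in one stroke (which automatically gives well-definedness), whereas you first check well-definedness across two intervals and then upgrade to the global statement; the underlying computation is identical.
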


\begin{proof}
First we show that $(M,+,0)$ is a commutative ordered monoid with respect
to $\leq$. Let $x,y,z\in M$ such that $x\leq y$.
By (ii), there exists $b\in \mI(M)$ such that $x,y,z\leq b$. Since
$([0,b];+,\lambda_{b},0,b)$, by (iv), $x+z\leq y+z$ and so $(M,+,0)$ is a
commutative ordered monoid with respect to $\leq$.
Suppose that $x\sqcup y$ be the least upper bound of $x$ and $y$ in
$([0,b];+,\lambda_{b},0,b)$.
We claim that $x\vee_{_b} y$ is the least upper bound of $x$ and $y$ in
$(M,\leq)$.
Let $u\in M$ be an upper bound of $\{x,y\}$. Then by (ii), there is
$a\in\mI(M)$ such that $b,u\leq a$.
Let $b\wedge_{_a} u$ be the greatest lower bound of $\{b,u\}$ in the
$MV$-algebra $([0,a];+,\lambda_{a},0,a)$.
Clearly, $b\wedge_{_a} u\in [0,b]$ and is an upper bound for $\{x,y\}$
(note that the by (iv), the partially order relation on the $MV$-algebras
$[0,b]$ and $[0,a]$ are coincide).
It follows that $x\vee_{_b} y\leq b\wedge_{_a} u\leq u$ and so $x\vee_{_b}
y$ is the least upper bound of $x$ and $y$ in $(M,\leq)$.
In a similar way, $\{x,y\}$ has the greatest lower bound in $(M,\leq)$ and
so $(M,\leq)$ is a lattice with the least element $0$.
Since each $MV$-algebra $([0,b];+,\lambda_{b},0,b)$ is a distributive
lattice, we can easily obtain that
$(M,\leq)$ is a distributive lattice, too. Summing up the above results,
$M$ is an $EMV$-algebra.
\end{proof}


\end{document}